\newtheorem{theorem}{Theorem}
\newtheorem{lemma}[theorem]{Lemma}
\newtheorem{corollary}[theorem]{Corollary}
\newtheorem{Pro}[theorem]{Proposition}
\providecommand{\customgenericname}{}
\newcommand{\newcustomtheorem}[2]{%
  \newenvironment{#1}[1]
  {%
   \renewcommand\customgenericname{#2}%
   \renewcommand\theinnercustomgeneric{##1}%
   \innercustomgeneric
  }
  {\endinnercustomgeneric}
}
\theoremstyle{remark}
\newtheorem*{remark}{Remark}
\newtheorem{example}{Example}
 \newcommand{\CC}{\mathbb{C}}
\newcommand{\DD}{\mathbb{D}}
\newcommand{\TT}{\mathbb{T}}
\newcommand{\PP}{\mathbb{P}}
\newcommand{\RR}{\mathbb{R}}
\def\bb{\begin{color}{blue}}
\def\bg{\begin{color}{green}}
\def\br{\begin{color}{red}}
\def\eg{\end{color}}
\def\er{\end{color}}
\def\eb{\end{color}}
\let \le \leqslant
\let \leq \leqslant
\let \ge \geqslant
\let \geq \geqslant
\newcommand{\ee}{\epsilon}
\newcommand{\dx}{\textrm{d}}
\newcommand{\RRe}{\text{Re} \,}
\newcommand{\dtt}{\, \dtt}
\newcommand{\capc}{\mathbf{c}}
\newcommand{\parsig}{{\boldsymbol \sigma}}
\renewcommand{\epsilon}{\varepsilon}
\begin{document}

\bibliographystyle{alpha}

\title{One-dimensional scaling limits in a planar Laplacian random growth model}

\author{Alan Sola
\thanks{sola@math.su.se}}
\affil{Department of Mathematics, Stockholm University, 106 91 Stockholm, Sweden.}
\author{Amanda Turner
\thanks{a.g.turner@lancaster.ac.uk}}
\affil{Department of Mathematics and Statistics, Lancaster University, 
Lancaster LA1 4YF, UK.}
\author{Fredrik Viklund
\thanks{fredrik.viklund@math.kth.se}}
\affil{Department of Mathematics, Royal Institute of Technology, 100 44 Stockholm, Sweden.}
\maketitle

\begin{abstract}
We consider a family of growth models defined using conformal maps in which the local growth rate is determined by $|\Phi_n'|^{-\eta}$, where $\Phi_n$ is the aggregate map for $n$ particles. We establish a scaling limit result in which strong feedback in the growth rule leads to one-dimensional limits in the form of  straight slits. More precisely, we exhibit a phase transition in the ancestral structure of the growing clusters: for $\eta>1$, aggregating particles attach to their immediate predecessors with high probability, while for $\eta<1$ almost surely this does not happen.
\end{abstract}

\tableofcontents

\section{Introduction}\label{intro}
\subsection{Conformal aggregation processes}
Laplacian growth models describe processes where the local growth rate of a piece of the boundary of a growing compact cluster is determined by the Green's function of the exterior of the cluster. Such growth processes can be used to model a range of physical phenomena, including ones involving aggregates of diffusing particles. Discrete versions can be formulated on a lattice in all dimensions: some famous examples of this type of growth process include diffusion-limited aggregation (DLA) \cite{WS81}, the Eden model \cite{E61}, or the more general dielectric breakdown model (DBM) \cite{NPW84}. Despite considerable numerical evidence suggesting that the clusters that arise in these processes exhibit fractal features, very few rigorous results are known (for DLA, see \cite{Kes87}) and it remains a formidable challenge to rigorously analyze long-term behavior such as sharp growth rates of the clusters.

One objection that can be leveled at lattice-based models is that the underlying discrete spatial structure could potentially introduce anisotropies in the growing clusters that are not present in the physical setting of the plane or three-dimensional space. Indeed, large-scale simulations in two dimensions demonstrate anisotropy along the coordinate axes \cite{GB17}.  This fact provides one motivation for the study of off-lattice versions of aggregation processes. In the plane, such off-lattice models can be formulated in terms of iterated conformal mappings,  providing access to complex analytic machinery. Clusters produced by these conformal growth processes are initially isotropic by construction, but simulations suggest that in many instances, anisotropic structures appear on timescales where the number of aggregated particles becomes large compared to the size of the individual constituent particles. Nevertheless, proving the existence of such small-particle limits, whether anisotropic or not, has proved elusive, similarly to the case of lattice-based models. 

A fascinating feature of Laplacian growth models is competition between concentration and dispersion of particle arrivals on the cluster boundary. Protruding structures (``branches") and their endpoints (``tips") tend to attract relatively many arrivals, but they compete with each other as well as the remainder of the boundary. (Kesten's discrete Beurling estimate gives an upper bound on the tip concentration in the case of DLA.) The degree to which tips are favored is determined by the exact choice of growth rule, and several models contain one or more parameters that affect concentration, dispersion, and competition \cite{NPW84, HL98, CM02, Law06}. 

Previous and recent work on small-particle limits of conformal aggregation models \cite{NT12, JST15, Silv, NST19} has yielded growing disks, that is, smooth and isotropic shapes; the dispersion effect ``wins'' in the limit. In this paper, we study a particular instance of a conformal growth model, focusing instead on the concentration aspect of Laplacian growth and showing that anisotropic scaling limits arise in the presence of strong feedback in the growth rule. The scaling limits we exhibit are highly degenerate in the sense that growth, which is initially spread out, favors tips very strongly, and eventually collapses onto a single growing slit. 

To state our results, we first describe the general class of processes our object of study fits into. Let $\capc>0$, and let $f_{\capc}$ denote the unique conformal map
\[f_{\capc}\colon \Delta=\{z\in \CC\colon |z|>1\}\cup\{\infty\} \to D_1=\Delta\setminus (1,1+d]\]
having $f_{\capc}(z)=e^{\capc}z+\mathcal{O}(1)$ at infinity, and sending the exterior disk $\Delta$ to the complement of the closed unit disk with a slit of length $d=d(\capc)$ attached 
to the unit circle $\mathbb{T}$ at the point $1$. 
The logarithmic capacity $\capc$ and the length $d$ of the slit satisfy
\begin{equation}
e^{\capc}=1+\frac{d^2}{4(1+d)};
\label{capcvssize}
\end{equation}
in particular, $d\asymp \capc^{1/2}$ as $\capc\to 0$.
In terms of aggregation, the closed unit disk can be viewed as a seed, while the slit represents an attached particle. Typically, we think of the particle as being small compared to the seed.

A general two-parameter framework to model random or deterministic aggregation, based on conformal maps, is given by the following construction. Pick a sequence $\{\theta_k\}_{k=1}^{\infty}$ in
$[-\pi, \pi)$, and let $\{d_k\}_{k=1}^{\infty}$, or, equivalently, $\{c_k\}_{k=1}^{\infty}$, be a sequence of non-negative numbers connected via \eqref{capcvssize}. From the two numerical sequences $\{\theta_k\}$ and $\{c_k\}$, we obtain a sequence $\{f_k\}_{k=1}^{\infty}$ of rotated and rescaled conformal maps, referred to as building blocks, via
\[f_k(z)=e^{i\theta_k}f_{c_k}(e^{-i\theta_k}z).\]
On its own, each individual $f_k$ grows a slit in the exterior disk, attached at $e^{i\theta_k}$ and having logarithmic capacity $c_k$.
Finally, we set
\begin{equation}
\Phi_n(z)=f_1\circ\cdots\circ f_n(z), \quad n=1,2,\ldots.
\label{aggproc}
\end{equation}
Each $\Phi_n$ is itself a conformal map sending the exterior disk onto the complement of a compact set $K_n \subset \CC$, that is,
\[\Phi_n\colon \Delta \to \CC\setminus K_n.\]
The sets $\{K_n\}_{n=1}^{\infty}$ are called clusters. They satisfy $K_{n-1}\subset K_n$, and model a growing two-dimensional aggregate formed of $n$ particles. At infinity, we have 
\[\Phi_n(z)=e^{C_n}z+\mathcal{O}(1),\]
where 
\begin{equation}
\mathrm{cap}(K_n)=e^{C_n}=e^{\sum_{k=1}^{n}c_k}
\label{totalcaprel}
\end{equation}
is the total capacity of the $n$th cluster.

When modeling random aggregates formed via diffusion, one  chooses the angles $\{\theta_k\}$ to be i.i.d., and uniform in $[-\pi,\pi)$. Due to the conformal invariance of harmonic measure, this has the effect of attaching the $n$th particle at a point chosen according to harmonic measure (seen from infinity) on the boundary of $K_{n-1}$. This type of setup has been considered in a number of papers, see for instance \cite{HL98, Mak99, CM01, MatJen02, RZ05, JS09, NT12, JST12, JST15, Silv}; we shall only briefly mention models that are particularly pertinent to our study.

\subsection{Aggregate Loewner Evolution (ALE)}
The main object of study in the present paper is a model we refer to as aggregate Loewner evolution, abbreviated $\mathrm{ALE}(\alpha, \eta)$, with parameters $\alpha \in \RR$ and $\eta \in \RR$. In $\mathrm{ALE}(\alpha, \eta)$, conformal maps $\Phi_n$ are defined as in \eqref{aggproc} as follows.

Initialize by setting $\Phi_0(z)=z$ and letting $\mathcal{F}_{0}$ be the trivial $\sigma$-algebra.
\begin{itemize}
\item{ 
For $k=1, 2, 3, \ldots$, we let $\theta_k$ have distribution conditional on $\mathcal{F}_{k-1}=\mathcal{F}(\theta_1,\ldots \theta_{k-1}; c_1, \ldots, c_{k-1})$ given by
\begin{equation}
h_k(\theta)=\frac{|\Phi_{k-1}'(e^{\parsig+i\theta})|^{-\eta}\dx \theta}{\int_{\TT}|\Phi_{k-1}'(e^{\parsig+i\theta})|^{-\eta}\dx \theta}.
\label{aleangles}
\end{equation}
Here, $\parsig>0$ is a regularization parameter, which ensures that the angle distributions are well defined even though $\Phi'_{k-1}(e^{i\theta})$ has zeros and singularities on $\TT$. The parameter $\parsig$ is allowed to depend on the basic logarithmic capacity parameter $\capc$. Typically, we shall take \[\parsig=\parsig(\capc)=\capc^{\gamma}\] for some appropriate $\gamma>0$.}
\item{
Next, we define a sequence of logarithmic capacities for $k=1, 2,3, \ldots$ by taking
\begin{equation}
c_k=\capc |\Phi_{k-1}'(e^{\parsig+i\theta_k})|^{-\alpha}.
\label{alecaps}
\end{equation}
}
\end{itemize}
We note that $\mathrm{ALE}(\alpha,0)$ is the same model as the Hastings-Levitov $\mathrm{HL}(\alpha)$ model studied in \cite{HL98, Davetal99, RZ05, JST15}, and in particular 
 $\mathrm{ALE}(0,0)$ coincides with the $\mathrm{HL}(0)$ model studied in depth in \cite{NT12, Silv}. The Hastings-Levitov model was introduced as a conformal mapping model of dielectric breakdown (DBM) \cite{NPW84}, a discrete model in which vertices are added to a growing cluster by drawing bonds from among the neighboring lattice points. At stage $n$ of $\mathrm{DBM}(\eta)$, a point is added to the cluster $K_n$ by including a neighbor of $(j,k) \in K_n$ with probability
\[p_n\left((j,k)\rightarrow (j',k')\right)=\frac{\phi_n(j',k')^{\eta} }{\sum_{(l,m)} \phi_n(l,m)^{\eta}}.\] 
Here, summation is over lattice neighbors of $K_n$ and the function $\phi_n$ is discrete harmonic, and has $\phi_n=0$ on $K_n$ and $\phi_n=1$ on some large external circle. 

Off-lattice versions of $\mathrm{DBM}$ involving non-uniform angle choices determined by the derivative of a conformal map have been considered by several authors. Hastings \cite{Has01}, and subsequently Mathiesen and Jensen \cite{MatJen02}, study a model that essentially corresponds to $\mathrm{ALE}(2,\eta)$ modulo a slightly different parametrization in $\eta$. (In fact, an alternative name for the growth model in this paper could have been $\mathrm{DBM}(\alpha, \eta)$ or $\mathrm{HL}(\alpha, \eta)$, but we have opted for a different terminology to avoid confusion with lattice models, and also to emphasize connections with the Loewner equation, see below.) Hastings argues that for large enough exponents, more precisely, for $\eta\geq 3$ in our parametrization, the corresponding clusters become one-dimensional; he also points out that the behavior of the models depends strongly on the choice of regularization.

Another model that fits into this general framework is the Quantum Loewner Evolution model ($\mathrm{QLE}(\gamma, \eta)$) of Miller and Sheffield \cite{MSDuke,MSLQG} which is proposed as a scaling limit of DBM($\eta$) on a $\gamma$-Liouville quantum gravity surface. In the $\mathrm{QLE}$ construction, particles are attached according to a distribution which depends on the power of the derivative of the cluster map, as in \eqref{aleangles}, but with an additional term involving the Gaussian Free Field due to the presence of Liouville quantum gravity. In the construction of $\mathrm{QLE}$, capacity increments are kept constant, as for $\mathrm{ALE}(0,\eta)$. However, each particle in $\mathrm{QLE}$ is constructed as an $\mathrm{SLE}$ curve, rather than the straight slits used in $\mathrm{ALE}$.

Common to all conformal mapping models of Laplacian growth is the difficulty that derivatives of conformal mappings do not remain bounded away from 0 or $\infty$ as they approach the boundary and therefore the map $\theta \mapsto |\Phi'_n(e^{i \theta})|^{-1}$ can be very badly behaved. For instance, even when $n=1$, $|\Phi'_n(e^{i \theta})|^{- \eta}$ is not integrable over $\mathbb{T}$ for certain values of $\eta$ and hence the $\mathrm{ALE}(\alpha,\eta)$ model would not be well defined if we were to use $|\Phi'_n(e^{i \theta})|^{-\eta}$ as angle density. As mentioned above, for this reason we define the model via the regularization parameter $\parsig$ as in \eqref{aleangles}, and then let $\parsig \to 0$ together with the (pre-image) particle size, controlled by the parameter $\capc$. A similar difficulty arises from the dependence of the particle sizes on the derivatives of the conformal mappings. Although in this case the model is well-defined without the need for a regularization parameter in \eqref{alecaps}, it is no longer possible to guarantee that the resulting clusters have total capacity bounded above and below. Indeed, even with the presence of a regularization parameter, it is not clear that the total capacity remains bounded as $\parsig \to 0$. The exception is the $\mathrm{ALE}(0,\eta)$ model: in light of \eqref{totalcaprel}, taking $n\asymp \capc^{-1}$ is a natural choice of time-scaling in $\mathrm{ALE}(0,\eta)$ as with this choice the resulting clusters have total capacity bounded above and below. This in turn means that the total diameter of the clusters $K_n$ remains bounded as a consequence of Koebe's $1/4$-theorem, see \cite{PomBook}. The fact that we have some a priori control over the global size of clusters is our main motivation for moving from studying $\mathrm{HL}(\alpha)$ with $\alpha$ large to $\mathrm{ALE}(0,\eta)$ with $\eta$ large. Simulations suggest that one-dimensional limits are present also in $\mathrm{HL}(\alpha)$ for large $\alpha$ but showing that this is the case seems technically more difficult.

In this paper, we mainly focus on $\mathrm{ALE}(0,\eta)$ for $\eta>1$, and show that the conformal maps $\Phi_n$ converge to a randomly oriented single-slit map in the regime where 
$n\asymp \capc^{-1}$. 
This can be viewed as a rigorous version of Hastings' investigation \cite{Has01} of $\mathrm{ALE}(2,\eta)$ for the $\mathrm{ALE}(0,\eta)$ model. To obtain our convergence results, we exploit what is in a way the most extreme mechanism that could lead to a single-slit limit, namely that of aggregated particles becoming attached to their immediate predecessors. The main difficulties in the proof are that the angle densities induced by slit maps exhibit bad behavior even in the presence of regularization and have maxima and minima of different orders in the regularization parameter $\parsig$, making it hard to show convergence to a point mass. Furthermore, the feedback mechanism in \eqref{aleangles} is sensitive so that a single ``bad" angle can destroy the genealogical structure of the growing slit by leading to the creation of a new, competing tip further down the slit, which could lead to a splitting of growth into two branches.

\section{Overview of results}\label{overview}
\begin{figure}[ht!]
    \subfigure[$\eta=-1.0$]
      {\includegraphics[width=0.4 \textwidth]{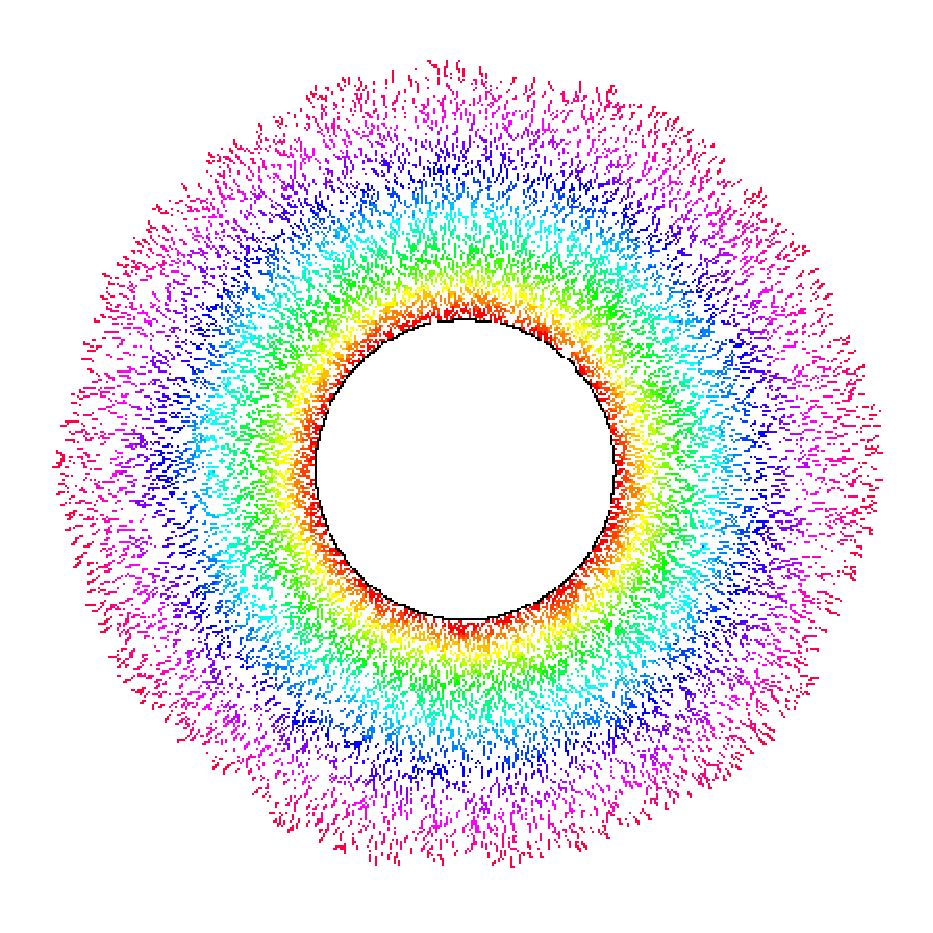}}
    \hfill
    \subfigure[$\eta=0.0$]
      {\includegraphics[width=0.4 \textwidth]{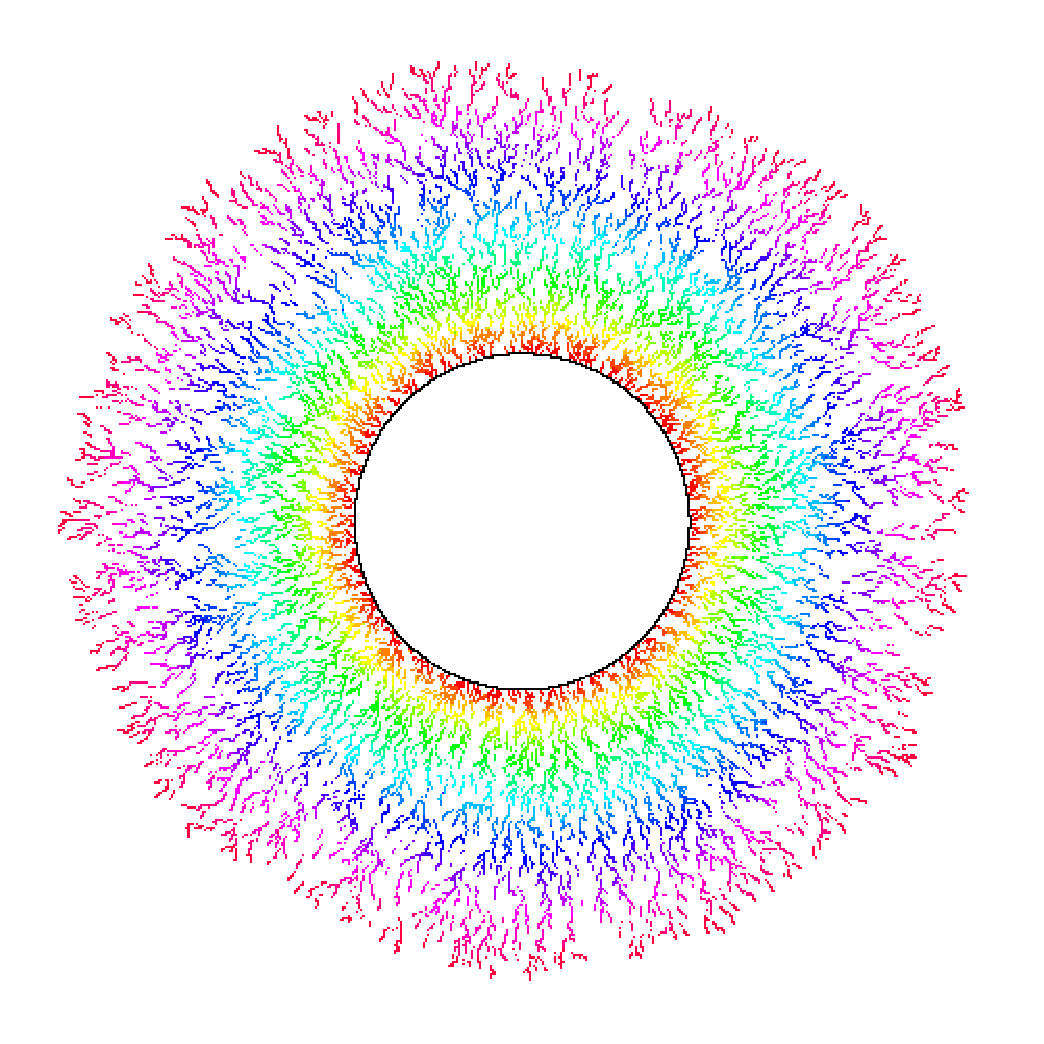}}
		\\
    \subfigure[$\eta=1.0$]
      {\includegraphics[width=0.4 \textwidth]{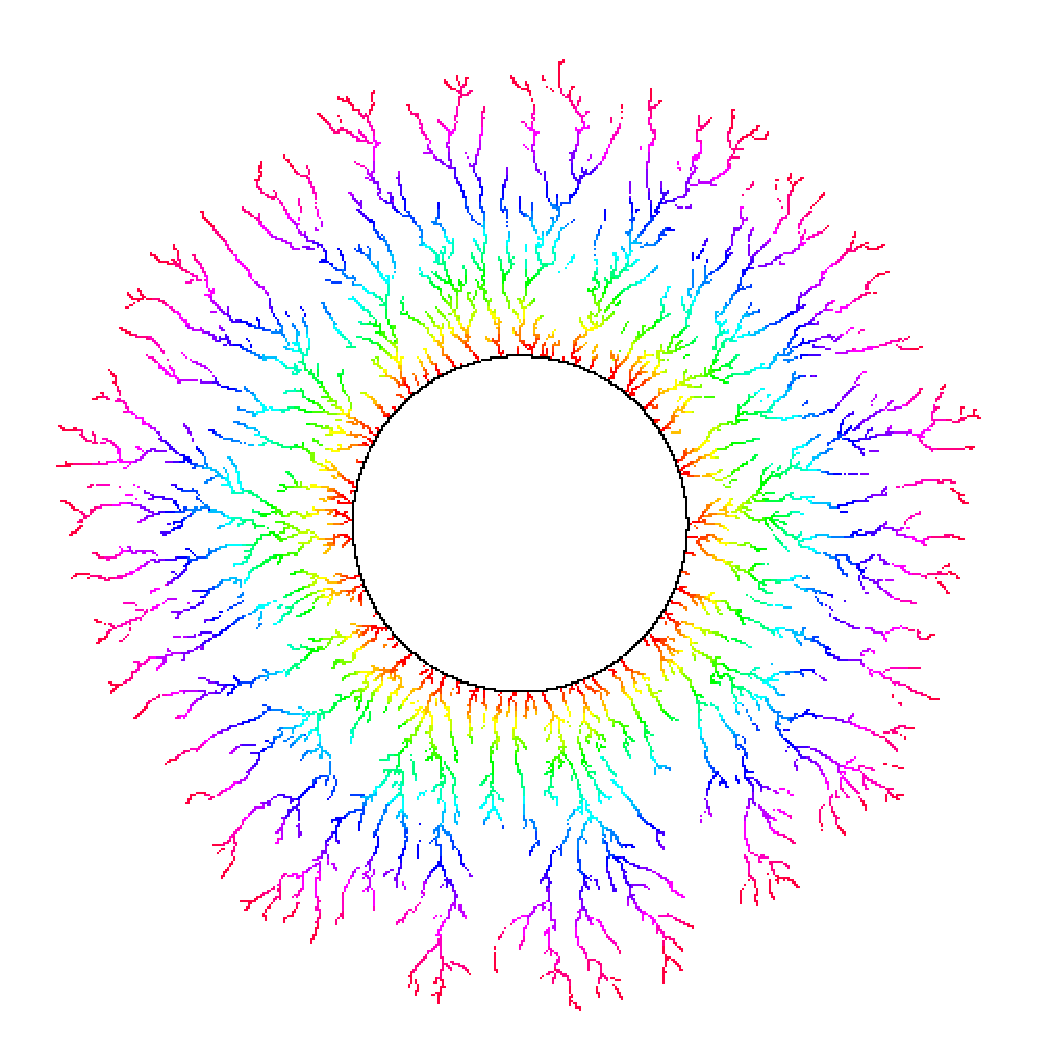}}
    \hfill
    \subfigure[$\eta=1.5$]
      {\includegraphics[width=0.4 \textwidth]{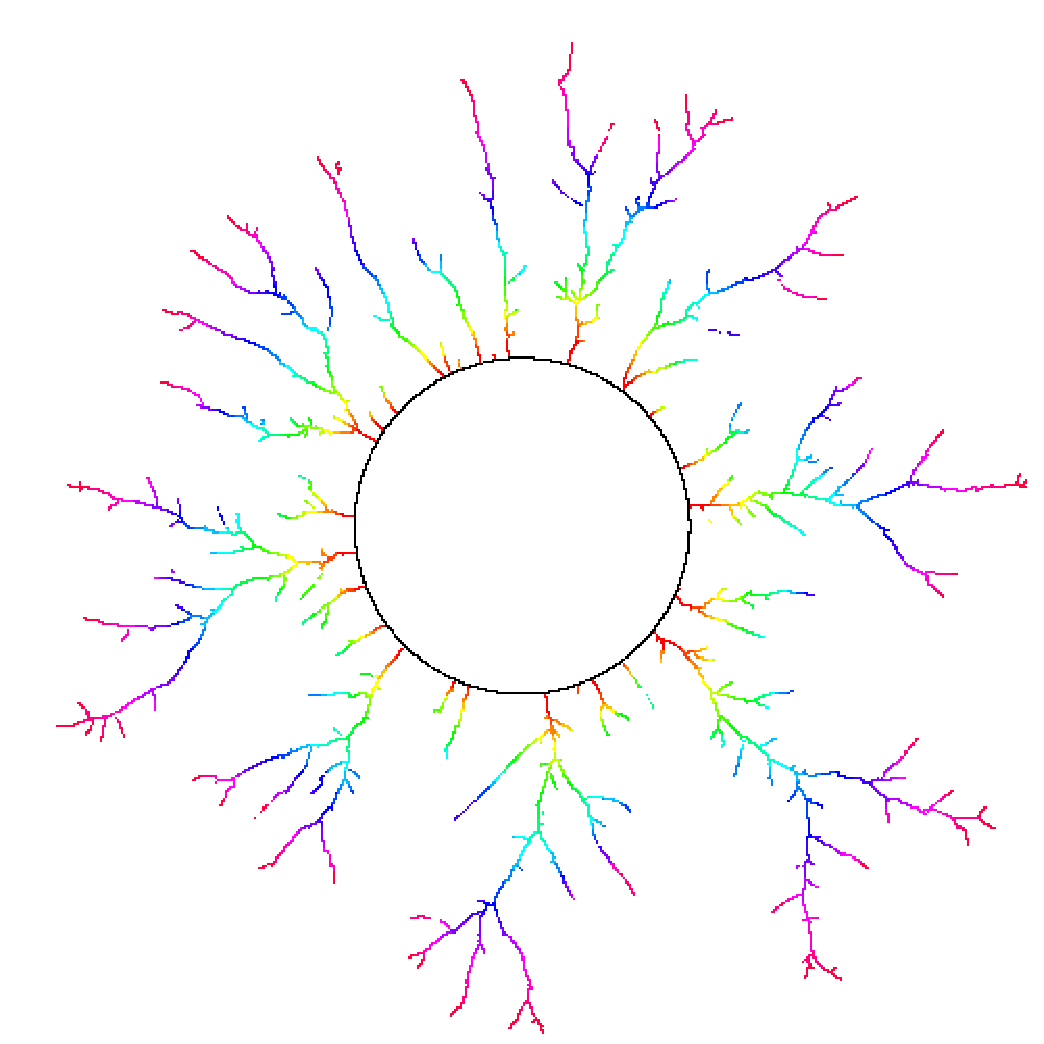}}
    \\
		\subfigure[$\eta=2.0$]
      {\includegraphics[width=0.45 \textwidth]{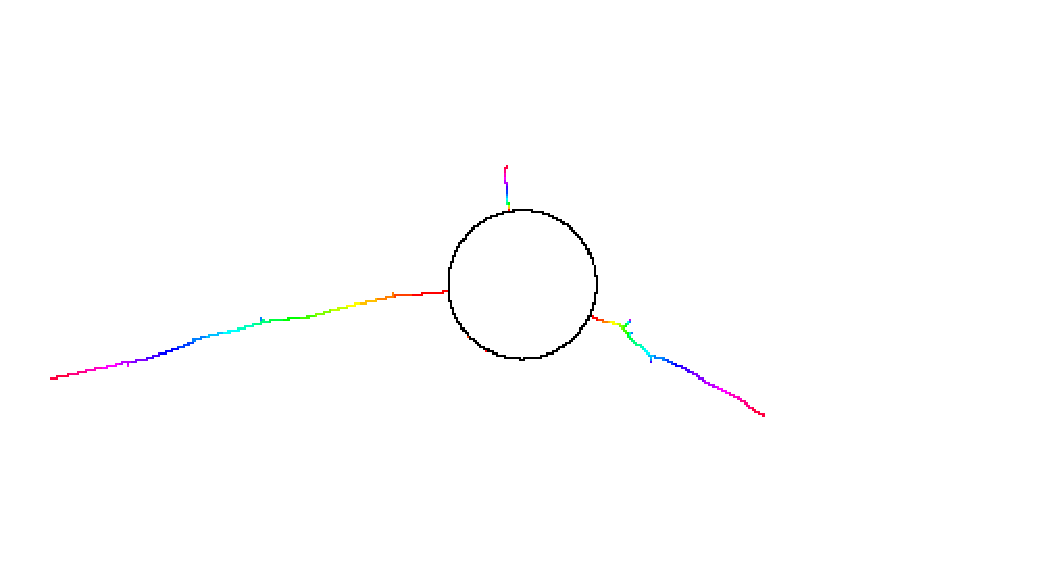}}
    \hfill
    \subfigure[$\eta=4.0$]
      {\includegraphics[width=0.45 \textwidth]{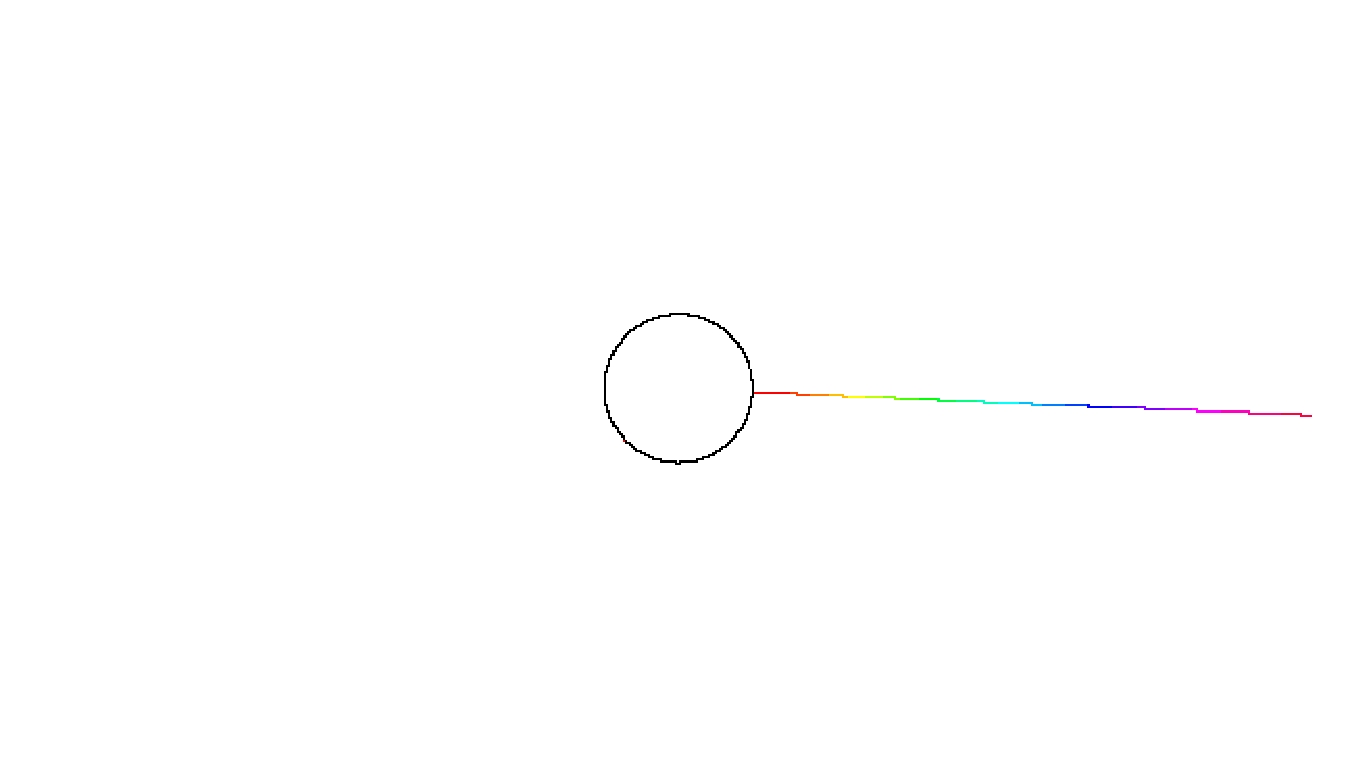}}
  \caption{\textsl{$\mathrm{ALE}(0, \eta)$ clusters with $\capc=10^{-4}$, $\parsig=\capc^2$, and $n=10,000$.}}
\label{alesimulations}
\end{figure}
Clusters that are formed by successively composing slit maps come with a natural notion of ancestry for their constituent particles. We say that a particle $j$ has parent 0 if it attaches directly to the unit disk and that the particle $j$ has parent $k$ if the $j$th particle is directly attached to the $k$th particle for $j>k$. More precisely, suppose that $\beta_{\capc} \in (0, \pi)$ is defined by
\[
f_{\capc}^{-1}\left ( (1, 1+ d(\capc)] \right ) = \{ e^{i \theta} : |\theta|<\beta_{\capc}\}
\]
so $e^{\pm i\beta_{\capc}}$ is mapped by the basic slit map to the base point of the slit i.e.~$f_{\capc}(e^{ \pm i \beta_{\capc}}) = 1$. Therefore particle $j$ has parent $0$ if $|\Phi_{j}(e^{i (\theta_j \pm \beta_{\capc})})|=1$ and particle $j$ has parent $k\geq 1$ if
\[
e^{-i \theta_{k}} \Phi_{k,j}(e^{i (\theta_j \pm \beta_{\capc})}) \in  (1, 1+ d(\capc)],
\]
where $\Phi_{k,j}(z)=f_k \circ f_{k+1} \circ \cdots \circ f_j(z)$.

In the $\mathrm{ALE}(0,\eta)$ model, each successive particle chooses its attachment point on the cluster according to the relative density of harmonic measure (as seen from infinity) raised to the power $\eta$. As the highest concentration of harmonic measure occurs at the tips of slits, intuitively one would expect that for sufficiently large values of $\eta$ each particle is likely to attach near the tip of the previous particle. In this paper we show that this indeed happens, and we identify the values of $\eta$ for which the above event occurs with high probability in the small-particle limit, that is, we show that the probability tends to $1$ as $\capc \to 0$. Figure \ref{alesimulations} displays $\mathrm{ALE}(0,\eta)$ clusters for different values of $\eta$.

\begin{figure}[ht!]
\begin{center}
    \subfigure[$\parsig=\capc^{1/4}$]
      {\includegraphics[width=0.35 \textwidth]{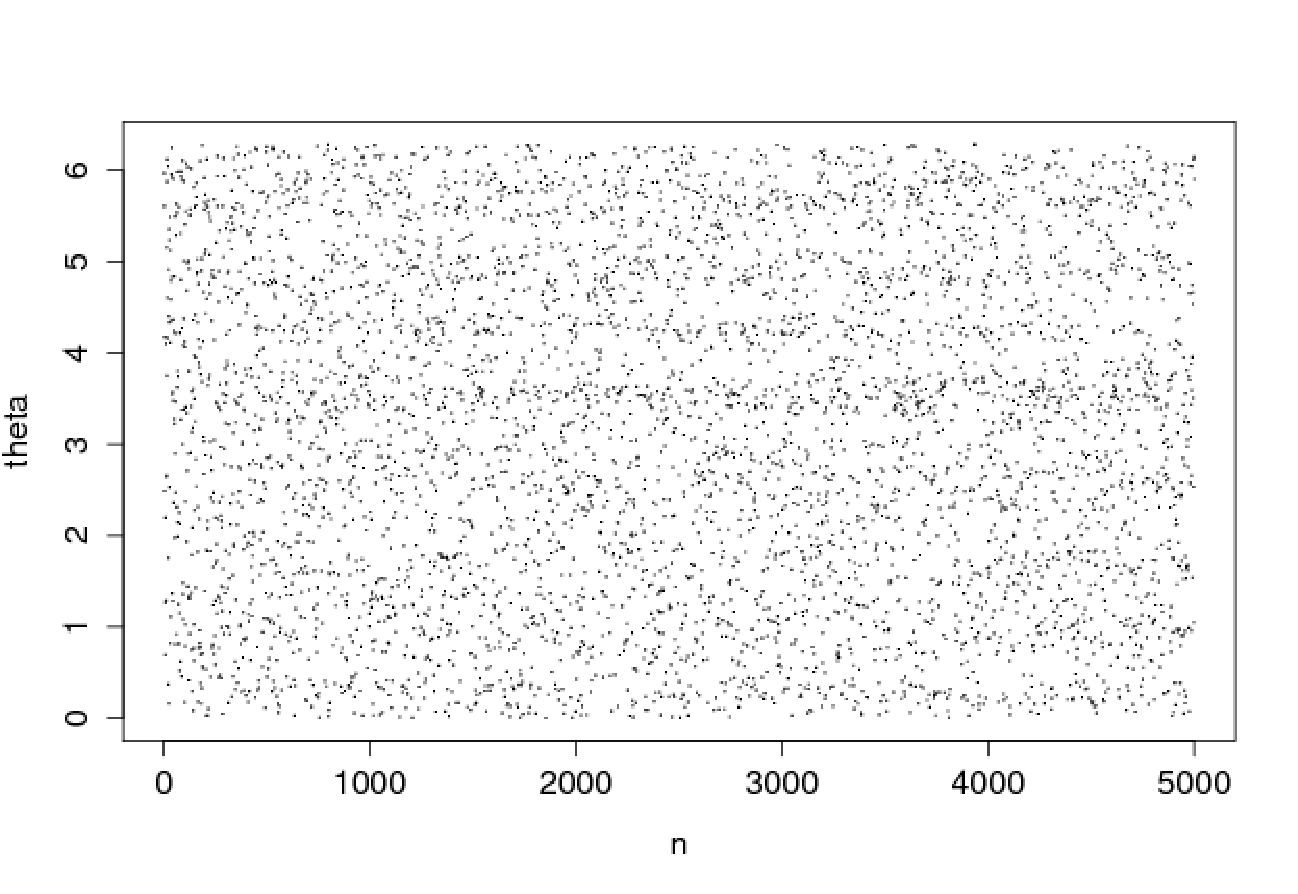}}
    \hfill
    \subfigure[$\parsig=\capc^{1/2}$]
      {\includegraphics[width=0.35 \textwidth]{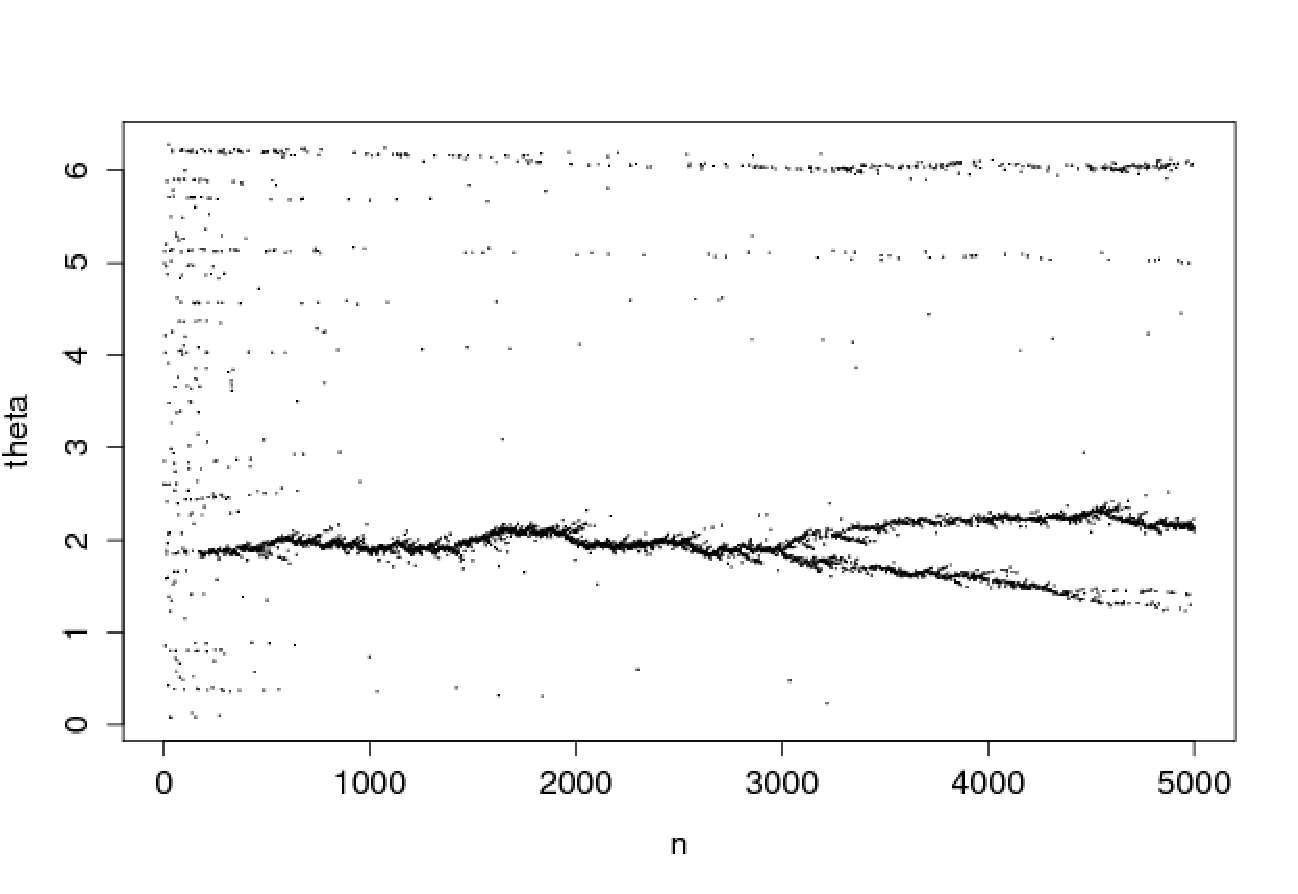}}
    \subfigure[$\parsig=\capc$ ]
      {\includegraphics[width=0.35 \textwidth]{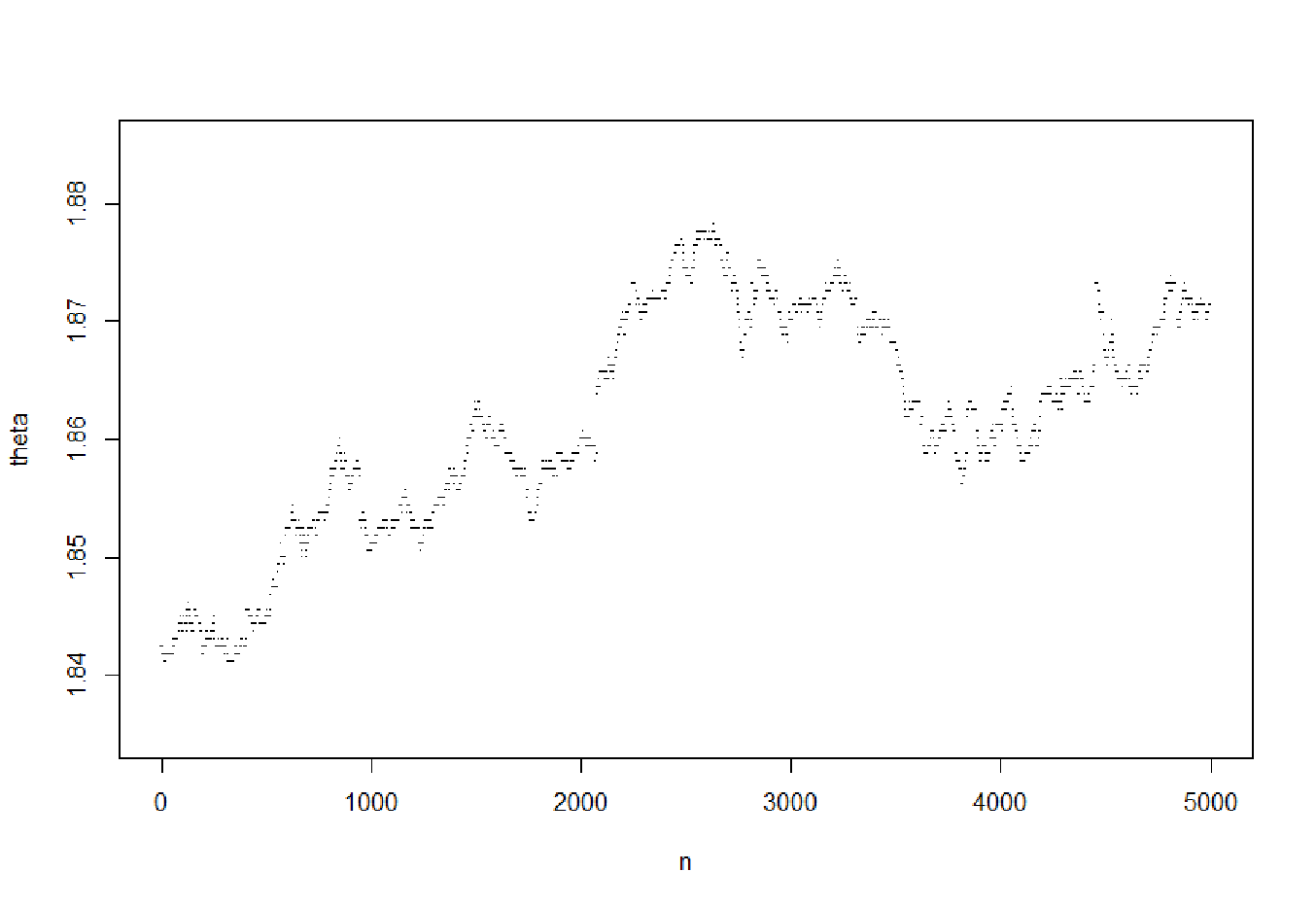}}
    \hfill
    \subfigure[$\parsig=\capc^{2}$]
      {\includegraphics[width=0.35 \textwidth]{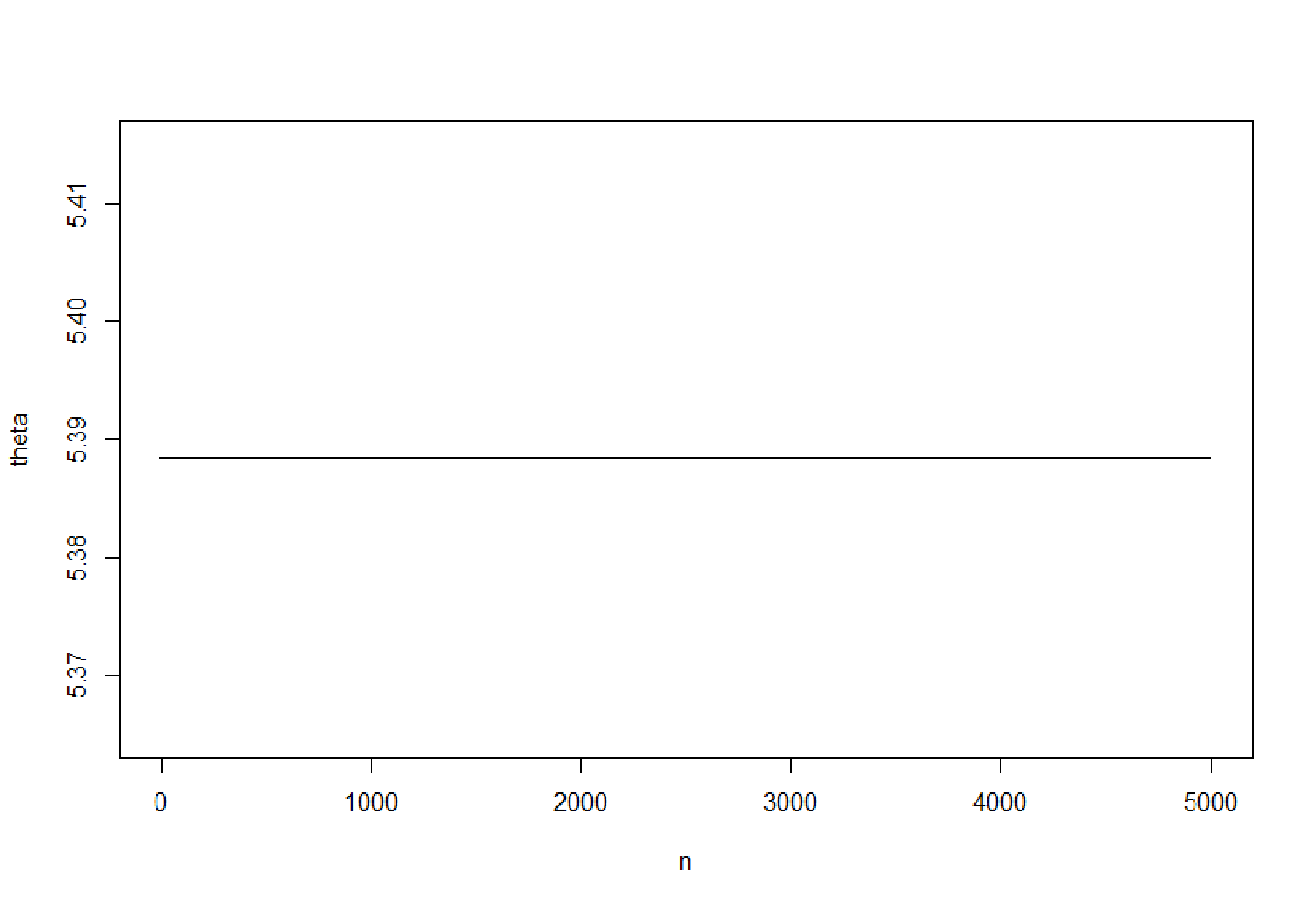}}
  \caption{\textsl{$\mathrm{ALE}(0,4)$ angle sequences with $\capc=10^{-4}$ and $n=5,000$, with varying regularization $\parsig$. (Note that images (c) and (d) are on a different spatial scale to (a) and (b), but the same spatial scale as each other.)}}
\label{sigmavaries}
\end{center}
\end{figure}

The limiting behavior of the model is quite sensitive to the rate at which $\parsig \to 0$ as $\capc \to 0$. Figure \ref{sigmavaries} shows how the angle sequences $\{\theta_k\}$ in $\mathrm{ALE}(0,4)$ are affected by the choice of exponent $\gamma$ when regularizing by $\parsig=\capc^{\gamma}$. This phenomenon is also observed by Hastings in \cite{Has01} for a related model. In \cite{JST15}, which deals with slow-decaying $\parsig$ scaling limits in a strongly regularized version of $\mathrm{HL}(\alpha)$, it is shown that the scaling limits of the clusters are disks for all values of $\alpha \geq 0$, provided $\parsig \gg (\log \capc^{-1})^{-1/2}$. By using similar techniques, combined with those developed in the paper \cite{NST19}, it is possible to prove that the corresponding scaling limits in $\mathrm{ALE}(0,\eta)$ are again disks for all $\eta \in \mathbb{R}$, provided $\parsig \gg (\log \capc^{-1})^{-1}$. (In \cite{NST19}, which focusses on the case $\eta \leq 1$, the stronger result is obtained that $\mathrm{ALE}(0,\eta)$ clusters converge to disks for all $\parsig \gg \capc^{\gamma}$ where $\gamma = 1/3$ if $\eta<1$ or 1/5 if $\eta=1$, and a phase-transition is observed at $\eta=1$ at the level of fluctuations). Together with the result in Theorem \ref{thm:main-thm} stated below, this shows the existence of a transition in the macroscopic shape of the $\mathrm{ALE}(0,\eta)$ clusters when $\eta > 1$, from slits to disks as the regularization parameter $\parsig$ increases. Simulations  suggest that there might be an intermediate regime where a suitable spatial rescaling, as in Figure \ref{sigmavaries}(c), reveals stochastic features in the angle sequence $\{\theta_n\}$. 
As we seek results in this paper which do not strongly depend on the choice of regularisation parameter, part of our objective is to identify the minimal value of $\eta$  for which there exists some $\sigma_0$ (dependent on $\capc$ and $\eta$) such that, provided $\parsig<\sigma_0$, with high probability each particle lands on the tip of the previous particle.

The following is the main result of the paper and shows that the $\mathrm{ALE}(0,\eta)$ model exhibits a phase transition at $\eta=1$ in the genealogy of the growing cluster in the small-particle limit. See Theorem \ref{slitstheorem} for a complete statement and proof; in particular we give sufficient conditions on $\gamma$.

\begin{theorem}[$\mathrm{ALE}(0,\eta)$ model]\label{thm:main-thm}
For $\mathrm{ALE}(0,\eta)$ with logarithmic capacity parameter $\capc$ and regularization parameter $\parsig$, let $\Omega_{N}=\Omega_{N}^{\eta,\capc, \parsig}$ be the event defined by
\[
\Omega_N = \{ \mbox{Particle $j$ has parent $j-1$ for all } j=1, \dots, N \}.
\]
For each $\eta>1$, there exists some $\gamma=\gamma(\eta)$ such that if $\sigma_0=\capc^\gamma$ and if $N=n(T):=\lfloor T \capc^{-1} \rfloor$ for some fixed $T>0$, then
\[
\lim_{\capc \to 0} \inf_{0<\parsig<\sigma_0} \mathbb{P}(\Omega_N) = 1,
\]
whereas if $\eta<1$, then for any $N >1$,
\[
\lim_{\capc \to 0} \sup_{\parsig>0} \mathbb{P}(\Omega_N) =0.
\]

In the case when $\eta>1$ and $\parsig<\sigma_0$, it follows that, for any $r>1$ and $T<\infty$,
\[
\sup_{t \leq T} \sup_{\{|z|>r\}}|\Phi_{n(t)}(z) -e^{i\theta_1} f_t(e^{-i\theta_1}z) | \to 0 \quad \textrm{ in probability as }\quad \capc\to 0,
\]
and the cluster $K_{n(t)}$ converges in the Hausdorff topology to a disk with slit of logarithmic capacity $t$ attached at position $z=e^{i\theta_1}$.
\end{theorem}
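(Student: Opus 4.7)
The plan is to focus first on the harder positive statement (the $\eta>1$ case). The natural approach is inductive: show that with high probability the cluster stays close to a straight slit in direction $e^{i\theta_1}$ at every step, and that the concentration of the angle density $h_k$ near the tip preimage is strong enough to propagate this property forward. Rather than working directly with $\Omega_N$, I would work with the stronger event $\tilde\Omega_N = \{|\theta_k - \theta_1|<\epsilon \text{ for all } 2\leq k\leq N\}$, for a suitably chosen $\epsilon=\epsilon(\capc)\to 0$ satisfying $\beta_{\capc}\gg\epsilon\gg\parsig$. Since on the straight-slit picture the tip preimage under $\Phi_{k-1}$ is exactly $e^{i\theta_1}$, this event says each new angle lands within $\epsilon$ of the current tip preimage, so in particular $\Omega_N$ holds, and the cluster stays close to the rotated slit map $\psi_k(z):=e^{i\theta_1}f_{k\capc}(e^{-i\theta_1}z)$.

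For the inductive step, assume $\tilde\Omega_{k-1}$ holds. A conformal stability argument for compositions of slit maps with nearly aligned angles shows that $\Phi_{k-1}$ differs from $\psi_{k-1}$ by an amount quadratically small in the deviations $|\theta_j-\theta_1|<\epsilon$. In particular $|\Phi_{k-1}'(e^{\parsig+i\theta})|$ is comparable to $|\psi_{k-1}'(e^{\parsig+i\theta})|$, and since the straight-slit map has a simple critical point at the tip preimage, one has $|\psi_{k-1}'(e^{\parsig+i\theta})|\asymp\sqrt{\parsig^2+(\theta-\theta_1)^2}$ in a neighborhood of $\theta_1$, with $|\psi_{k-1}'|$ bounded above and below by positive constants depending only on $(k-1)\capc\leq T$ elsewhere on the circle.

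Plugging this estimate into the angle density \eqref{aleangles}, the step-$k$ failure probability is, for $\epsilon\gg\parsig$ and $\eta>1$,
\[
\mathbb{P}(|\theta_k-\theta_1|>\epsilon\mid\mathcal{F}_{k-1})
\asymp
\frac{\int_{\epsilon<|\theta|<\pi}(\parsig^2+\theta^2)^{-\eta/2}\,d\theta + O(1)}{\int_{|\theta|<\epsilon}(\parsig^2+\theta^2)^{-\eta/2}\,d\theta + O(1)}
\asymp\left(\frac{\parsig}{\epsilon}\right)^{\eta-1},
\]
because the tip integral behaves like $\parsig^{1-\eta}$ and dominates the bounded bulk. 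A union bound over the $N=\lfloor T/\capc\rfloor$ steps demands $(\parsig/\epsilon)^{\eta-1}/\capc\to 0$, which is achieved by taking $\epsilon=\capc^{\gamma-1/(\eta-1)-\delta}$ for small $\delta>0$; the induction closes provided we also pick $\gamma=\gamma(\eta)$ large enough that the cumulative perturbation $N\epsilon^2$ of the cluster remains small compared with $\parsig$, so that the conformal stability estimate used above is preserved at every step. This forces $\gamma$ to exceed an explicit threshold of the form $1+2/(\eta-1)$, but any sufficiently large exponent works.

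Once $\mathbb{P}(\tilde\Omega_N)\to 1$ uniformly in $0<\parsig<\parsig_0$, the same stability yields uniform convergence $\Phi_{n(t)}\to\psi_{n(t)}$ on $\{|z|>r\}$, hence Carath\'eodory kernel convergence and therefore Hausdorff convergence of $K_{n(t)}$ to the slit of capacity $t$. The $\eta<1$ direction is much easier: the tip singularity of $|\psi_1'(e^{\parsig+i\theta})|^{-\eta}$ is integrable, so the mass in $(\theta_1-\beta_{\capc},\theta_1+\beta_{\capc})$ is at most $\int_0^{\beta_{\capc}}\theta^{-\eta}\,d\theta\asymp\beta_{\capc}^{1-\eta}\asymp\capc^{(1-\eta)/2}$, while the total mass on $\mathbb{T}$ is $\Theta(1)$; this gives $\mathbb{P}(\Omega_2)\to 0$ uniformly in $\parsig$, and the nesting $\Omega_N\subset\Omega_2$ for $N\geq 2$ yields the claim. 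The main obstacle is the $\eta>1$ inductive step: the angle density is so singular at the tip preimage that one needs a stable quantitative comparison between $|\Phi_{k-1}'|$ and $|\psi_{k-1}'|$ that survives both in the tip regime (where numerator and denominator almost cancel) and globally (where a single misattached particle could in principle create a competing near-critical point and destroy the comparison).
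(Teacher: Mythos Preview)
Your overall strategy---induct on $k$, compare $\Phi_{k-1}$ to the rotated slit map $\psi_{k-1}$, and use concentration of $h_k$ at the tip to propagate---matches the paper's. However, the step you flag as ``the main obstacle'' really is a gap, and your heuristic calculation is built on an assumption that fails. You assert that $|\Phi'_{k-1}(e^{\parsig+i\theta})|$ is \emph{globally} comparable to $|\psi'_{k-1}(e^{\parsig+i\theta})|$, but this is not what one can prove. Even on the good event, $\Phi'_{k-1}$ carries a zero/pole pair for each intermediate particle (tip and base), and these do not cancel when the $\theta_j$ are merely close rather than equal. The paper obtains two separate estimates: near the tip (within $\capc^{1/2}$) the ratio $|\Phi'_{k-1}/(f^{\theta_{k-1}}_{(k-1)\capc})'|$ is $1+O(\epsilon^2/\capc)$ via a Loewner linearization (Propositions~\ref{tipthm}--\ref{tipderivthm}), while away from the tip only a \emph{lower} bound $|\Phi'_{k-1}|\gtrsim \epsilon^{-1}\parsig(1-\cos(\theta-\theta_{k-1}))/(k\capc)^{1/2}$ is available (Proposition~\ref{phiprimebounds}), and this bound degrades with $\epsilon$. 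Consequently your failure probability $(\parsig/\epsilon)^{\eta-1}$ is only the near-tip piece; the away-from-tip contribution to $\mathbb{P}(|u_k|>\capc^{1/2})$ picks up an extra factor $(\epsilon/\parsig)^\eta$, which forces a strictly larger exponent $\gamma$ than your computation suggests.

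Two further points. First, your closing condition ``$N\epsilon^2\ll\parsig$'' is not the operative one: the tip comparison needs $\epsilon^2/\capc$ small (no factor of $N$, and the scale is $\capc$ not $\parsig$), and the away-from-tip piece imposes the separate condition $N B_N\to 0$ with $B_N\asymp\parsig^{\eta-1}\epsilon^\eta\parsig^{-\eta}\capc^{-(2\eta-1)/2}$. Second, a fixed threshold $\epsilon$ with a union bound is too crude to close the induction once the correct away-from-tip bound is used; the paper instead takes a growing threshold $\parsig k^{\lambda}(\log\capc^{-1})^{6\lambda}$ with $\lambda=\lambda(\eta)$, decomposes $\theta_n$ into a martingale part, a drift (controlled using the symmetry of $h_k^*$), and a large-jump part, and applies Bernstein's inequality. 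Your $\eta<1$ argument is essentially correct, though the tip mass is $\asymp\capc^{1/2}$ rather than $\capc^{(1-\eta)/2}$ once the $d(\capc)^\eta$ normalization of $|f'_\capc|^{-\eta}$ is included.
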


\subsection{A related Markovian model}

Observe that, for each $k$, we are free to specify the interval of length $2 \pi$ in which to sample $\theta_k$, and this choice does not have any effect on the maps $\Phi_n$. It is convenient to choose to sample $\theta_k$ from the interval $[\theta_{k-1}-\pi, \theta_{k-1}+\pi)$. In this case, we can express the event as
\[
\Omega_N=\left\{ \sup_{2 \leq j \leq N} |\theta_{j} - \theta_{j-1}| < \beta_{\capc} \right\}.
\]
(Recall that, by definition, $\beta_{\capc} \in (0,\pi)$ and $e^{\pm i\beta_{\capc}}$ is mapped
 by the basic slit map to the base point of the slit i.e.~$f_{\capc}(e^{ \pm i \beta_{\capc}}) = 1$.) One of the main difficulties in analysing this event is that the distribution of $\theta_k$ conditional on $\mathcal{F}_{k-1}$ (as defined in \eqref{aleangles}), depends non-trivially on the entire sequence $\theta_1, \dots, \theta_{k-1}$. In this subsection, we introduce an auxiliary model for random growth in the exterior unit disk in which the sequence of attachment angles is Markovian. The Markov model is relatively straightforward to analyse and exhibits an analogous phase transition to that described above. The remainder of the paper is concerned with examining how ALE$(0,\eta)$ and the Markov model relate to each other. 

Set $\Phi_0^*(z)=z$ and let $\{\Phi^*_n\}$ be conformal maps obtained through composing
\[\Phi^*_n=f_1^*\circ \cdots \circ f_n^*,\]
where each $f_k^*$ is a building block with $c_k=\capc$, and rotation angle $\theta_k^*$ having conditional distribution with density
\begin{equation}
h_k^*(\theta|\theta^*_{k-1}) = \frac{1}{Z_{k-1}^*}|f'_{\capc (k-1)}(e^{\parsig+i(\theta-\theta^*_{k-1})})|^{-\eta}, \quad k=1,2,3,\ldots.
\label{auxangles}
\end{equation}
Here, we have set \[Z^*_{k}=\int_{\TT}|f'_{\capc k}(e^{\parsig+i \theta})|^{-\eta}\dx \theta\]
and suppressed the dependence on $\capc$, $\parsig$ and $\eta$ to ease notation. 

In order for the measure above to be well-defined when $\eta \geq 1$, we require $\parsig > 0$. In words, the density of the $k$th angle distribution in this model is obtained by replacing the complicated $(k-1)$th cluster map of ALE by a simple slit map ``centered'' at $\theta^*_{k-1}$, and with deterministic logarithmic capacity $\capc (k-1)$. 

For this model we obtain the following theorem: we again set $n(t)=\lfloor t/\capc\rfloor$, let $K_{n(t)}^*$ denote the cluster associated with $\Phi^*_{n(t)}$, and define the event
\[
\Omega_N^* = \{ \mbox{Particle $j$ in the $\ast$-model has parent $j-1$ for all } j=1, \dots, N \}.
\]

\begin{theorem}[Markov model]\label{thm: auxslitconv}
Set $\sigma_0 = \capc^{\gamma^*}$ where
\[
\gamma^* > \frac{\eta+1}{2(\eta-1)}.
\]
Then
\begin{align*}
\lim_{\capc \to 0} \inf_{0<\parsig<\sigma_0} \mathbb{P}(\Omega_N^*) &= 1 \quad \mbox{if } \eta > 1 \\
\lim_{\capc \to 0} \sup_{\parsig>0} \mathbb{P}(\Omega_N^*) &=0 \quad \mbox{if } \eta<1.  \label{clustersfig}
\end{align*} 
Furthermore, when $\eta > 1$ and $\parsig<\sigma_0$, for any $r>1$ and $T<\infty$,
\[
\sup_{t \leq T} \sup_{\{|z|>r\}}|\Phi^*_{n(t)}(z) - e^{i\theta_1^*}f_t(e^{-i\theta_1^*}z) | \to 0 \quad \textrm{ in probability as }\quad \capc\to 0,
\]
and the cluster $K^*_{n(t)}$ converges in the Hausdorff topology to a disk with slit of logarithmic capacity $t$ attached at position $z=e^{i\theta_1^*}$.
\end{theorem}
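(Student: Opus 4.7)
The proof centers on analyzing the transition kernel of the Markov chain $\{\theta_k^*\}$: conditional on $\theta_{k-1}^*$, the increment $\phi := \theta_k^* - \theta_{k-1}^*$ has density proportional to $|f_{C}'(e^{\parsig+i\phi})|^{-\eta}$ with $C=\capc(k-1)$, ranging over $[\capc,T]$ for $k=2,\ldots,N$. The technical crux is a sharp estimate of $|f_C'|$ in three regions: (i) near the tip pre-image $\phi=0$, where $f_C$ has a critical point with local expansion $f_C(z) = (1+d_C) + b_C(z-1)^2 + O((z-1)^3)$ and coefficient $b_C \asymp C^{-1/2}$ as $C\to 0$ (with $b_C\asymp 1$ when $C$ is of order unity), which yields $|f_C'(e^{\parsig+i\phi})| \asymp b_C\sqrt{\parsig^2+\phi^2}$; (ii) near the base points $\phi=\pm\beta_C$, where $f_C$ has square-root branching, so for $\eta>0$ the singularity in $|f_C'|^{-\eta}$ is integrable and contributes only a lower-order term after $\parsig$-regularization; (iii) elsewhere, where $|f_C'|$ is uniformly bounded above and below over $C\in[\capc,T]$ and small $\parsig$.

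For $\eta>1$, integrating against these estimates gives total mass $Z^* \asymp b_C^{-\eta}\parsig^{1-\eta}$, dominated by the integrable-singular peak at $\phi=0$, and tail mass $\int_{|\phi|>\beta_\capc}|f_C'|^{-\eta}\,d\phi \lesssim b_C^{-\eta}\beta_\capc^{1-\eta} + O(1)$. Dividing yields the single-step failure bound
\[
\mathbb{P}(|\theta_k^* - \theta_{k-1}^*| > \beta_\capc \mid \mathcal{F}_{k-1}) \lesssim (\parsig/\beta_\capc)^{\eta-1} + b_C^{\eta}\parsig^{\eta-1}.
\]
With $\beta_\capc \asymp \capc^{1/2}$ and $\parsig=\capc^{\gamma^*}$, a union bound over $k=2,\ldots,N=\lfloor T/\capc\rfloor$ produces dominant contribution $N(\parsig/\beta_\capc)^{\eta-1} \asymp \capc^{(\gamma^*-1/2)(\eta-1)-1}$, which is $o(1)$ precisely when $\gamma^* > (\eta+1)/(2(\eta-1))$; the second term, after summing $\sum_k b_{C_k}^{\eta}$ (which is $O(\capc^{-1})$ for $1<\eta\leq 2$ and $O(\capc^{-\eta/2})$ for $\eta>2$), is dominated by the first under the same threshold. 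Hence $\mathbb{P}(\Omega_N)\to 1$ for such $\gamma^*$.

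For $\eta<1$, the tip peak of $|f_C'|^{-\eta}$ is no longer sufficiently singular as $\parsig\to 0$: the mass of the density inside $|\phi|<\beta_\capc$ vanishes as $\capc\to 0$ (at rate $\capc^{1/2}$ when $0<\eta<1$, and at rate $O(\beta_\capc)$ when $\eta\leq 0$, where the density instead concentrates near $\pm\beta_C$ for $k\geq 2$), while $Z^*=\Theta(1)$ is dominated by the bulk. Even a single step therefore satisfies $\mathbb{P}(|\theta_2^*-\theta_1^*|<\beta_\capc) \to 0$, forcing $\mathbb{P}(\Omega_N)\to 0$ for any $N\geq 2$.

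For the convergence statement when $\eta>1$ on $\Omega_N$: the density's concentration at scale $\parsig$ forces typical increments $|\theta_k^* - \theta_{k-1}^*| = O(\parsig)$, so the driving sequence $\{\theta_k^*\}$ behaves as an approximate random walk with total range $O(\sqrt{N}\parsig) = O(\capc^{\gamma^*-1/2}) = o(1)$. All $\theta_k^*$ stay close to $\theta_1^*$ (uniform on $[-\pi,\pi)$ since $f_0=\mathrm{id}$), and a quantitative continuity argument for the discrete radial Loewner chain --- comparing $f_1^*\circ\cdots\circ f_n^*$ to the rotated slit map $z\mapsto e^{i\theta_1^*}f_{n\capc}(e^{-i\theta_1^*}z)$ via a Gronwall-type estimate --- yields the claimed uniform convergence on $\{|z|>r\}$ and Hausdorff convergence of $K_{n(t)}$. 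The principal obstacle is the uniformity in $C\in[\capc,T]$ of the derivative estimate (i), particularly at $C\asymp\capc$ where $b_C$ blows up like $\capc^{-1/2}$; accurately tracking this dependence is what forces the sharper threshold $\gamma^*>(\eta+1)/(2(\eta-1))$ rather than the naive $\gamma^*>1/(\eta-1)$ one would obtain by ignoring $C$-variation. A secondary difficulty lies in the Loewner comparison needed to propagate the $o(1)$ driving perturbation uniformly through the composition of conformal maps.
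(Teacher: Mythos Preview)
Your proposal is correct and follows essentially the same route as the paper: local estimates for $|f_C'|$ near tip, base, and bulk, yielding $Z^*\asymp d(C)^{\eta}\parsig^{1-\eta}$ and the single-step tail bound $(\parsig/\beta_{\capc})^{\eta-1}+O(b_C^{\eta}\parsig^{\eta-1})$; a union bound over $N\asymp\capc^{-1}$ steps for $\eta>1$; the single-step failure $\mathbb{P}(|\theta_2^*-\theta_1^*|<\beta_{\capc})\to 0$ for $\eta<1$; and convergence via martingale control of $\{\theta_n^*\}$ combined with Loewner continuity. One minor imprecision: for $1<\eta\le 3$ the truncated second moment of an increment is of order $x^{3-\eta}\parsig^{\eta-1}$, not $\parsig^2$ (the tip density has heavy tails), so your range estimate $O(\sqrt{N}\parsig)$ is not right in that regime---but the corrected martingale variance is still $o(1)$ under $\gamma^*>\tfrac{1}{2}$, which your threshold implies, so the argument goes through unchanged.
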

\begin{remark}
It can also be shown that $\lim_{\capc \to 0} \inf_{0<\parsig<\sigma_0} \mathbb{P}(\Omega_N^*) = 1$ when $\eta=1$, provided $\sigma_0 \to 0$ exponentially fast as $\capc \to 0$, but we omit the details here. 
\end{remark}
We give the relatively straight-forward proof of Theorem \ref{thm: auxslitconv} in Section \ref{Markovproof}. Because of the Markovian nature of the auxiliary model, all that is needed are estimates on the derivative of the explicit slit map to control the densities \eqref{auxangles}, together with standard martingale arguments.

\subsection{Overview of the proof of Theorem~\ref{thm:main-thm} and organization of the paper}
The main idea for the proof of Theorem \ref{thm:main-thm} is to show that the Markovian model of the previous section is a good approximation of the $\mathrm{ALE}(0,\eta)$ process. In order to do this one approach would be to try to argue that $|\Phi'_n(e^{\parsig+i\theta})|$ can be globally well approximated by $|(f^{\theta_{n}}_{n \capc})'(e^{\parsig + i \theta})|$, where we use the notation
\[
f_{\capc}^{\theta}(z) = e^{i\theta}f_{\capc}(e^{-i\theta}z)
\]
for the rotated slit maps. However, this seems difficult to make work to sufficient precision when evaluating the maps close to the boundary. Specifically, the map $\Phi'_n(z)$ has zeros (respectively singularities) at each of the points on the boundary of the unit disk which are mapped to the tip (respectively to the base) of one of the slits corresponding to an individual particle. In contrast, for the map $(f^{\theta_{n}}_{n \capc})'(z)$, the points corresponding to tips and bases of successive particles coincide and therefore the singularities and zeros corresponding to intermediate particles cancel each other out, leaving only a zero at the point mapped to the tip of the last particle and singularities at the two points which are mapped the base of the first particle (see Figure \ref{singularities}).

\begin{figure}[ht!]
\begin{center}
\includegraphics[width=8cm]{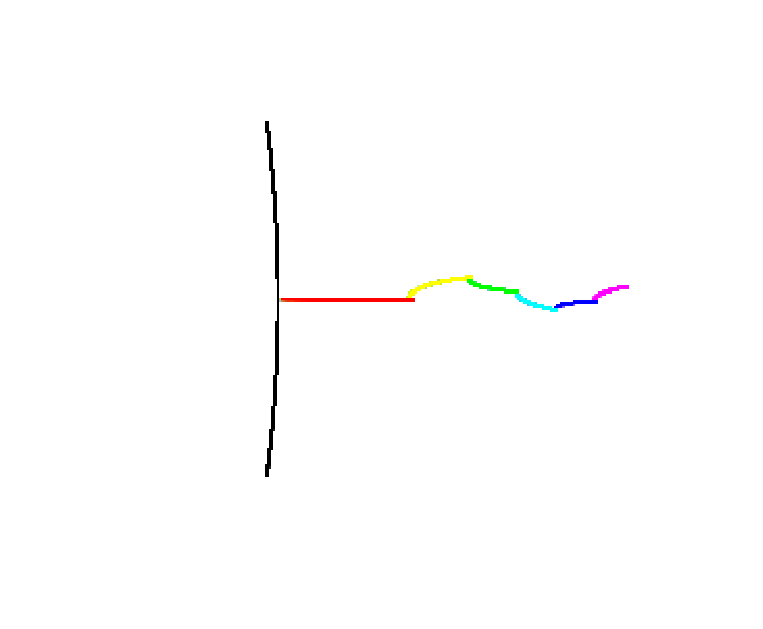}
\includegraphics[width=8cm]{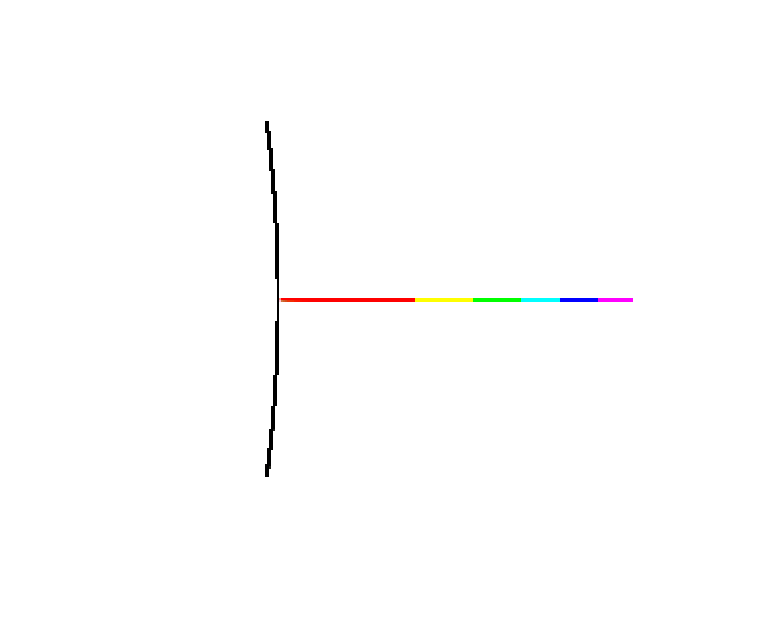}
  \caption{\textsl{Diagram illustrating the presence of zeros and singularities in the derivative at each successive particle tip and base in $\Phi_n(z)$ (left). These zeros and singularities are absent in $f_{n \capc}(z)$ except at the tip of the final particle and base of the first particle (right).}}
\label{singularities}
\end{center}

\end{figure}

Interactions between nearby tips can be subtle and are in general hard to analyze \cite{CM02}. Our strategy is instead to establish two properties of the distribution function $h_{n}(\theta)$. 
\begin{itemize}
\item{The first is to show that near the tip of the last particle to arrive the derivatives of $\Phi_n$ and $f_{n\capc}^{\theta_n}$ are in fact very close and so for very small values of $\theta-\theta_n$, $h_{n+1}(\theta)$ can be well approximated by $h_{n+1}^*(\theta|\theta_n)$.}
\item{ 
The second property is to show that $h_{n+1}(\theta)$ concentrates the measure so close to $\theta_{n}$ that even though the probability of attaching to earlier particles is higher than for the Markovian model, $\Omega_N$ still occurs with high probability, provided we now require
\[\gamma>
\begin{cases} 
(\eta^2+2\eta-1)/ [2(\eta-1)^2] &\mbox{ if } 1< \eta < 3; \\
(2\eta+1)/[2(\eta-1)] &\mbox{ if } 3 \leq \eta < 7; \\
5/4 &\mbox{ if } \eta \geq 7
\end{cases} \]
when regularizing by $\parsig<\sigma_0=\capc^{\gamma}$; see Figure \ref{glowerbounds} for plots of the lower bounds on $\gamma$ and $\gamma^*$.}
\end{itemize}

We now give a brief overview of the structure of the paper. In Section \ref{loewner} we provide some background information on the Loewner differential equation, which allows us to represent the aggregate maps $\Phi_n$ as solutions corresponding to a $[-\pi, \pi)$-valued driving process with equally spaced jump times and positions given by the random angles \eqref{aleangles}. In particular, we explain how convergence of an angle sequence $\{\theta_k\}$ allows us to deduce convergence of the corresponding conformal maps $\Phi_n$. 

In Section \ref{slitder} we obtain estimates on the derivative of the slit map used to construct the Markovian model. These estimates  lead to moment bounds for $[-\pi, \pi)$-valued random variables constructed from slit map derivatives. The arguments used are elementary in nature, and heavily use the explicit form of the slit map.

In Section \ref{slitconvergencesection}, we first apply our slit map estimates give a straight-forward proof of Theorem \ref{thm: auxslitconv}. 
Then we state the detailed estimates on Loewner derivatives at the tip and away from the approximate slit needed 
to show that $h_n(\theta)$, the density function for the $n$th angle $\theta_n$, has the required behaviour (deferring the proofs until the next Section). Similar arguments to those in the proof of Theorem \ref{thm: auxslitconv} are used to establish Theorem~\ref{thm:main-thm}, but since $\{\theta_k\}$ does not have a Markovian structure, there are further terms to control. We also discuss some extensions of our results, valid for certain instances of the $\mathrm{ALE}(\alpha,\eta)$ model as well as related models.

Finally, Section \ref{conformal} contains most of the technical machinery needed for the proof of Theorem \ref{thm:main-thm}. 
In this section, we obtain estimates on the distance between two solutions to the Loewner equation in terms of the distance between their respective driving functions,  in the case where we know what one of the solutions is (in our application it is a slit map). These estimates, which we believe may be of independent interest, enable us to obtain much more precise estimates than exist for generic solutions. In particular, our estimates give very good approximations when the conformal mappings are quite close to the boundary, whereas generic estimates blow up in this region. We perform this analysis by using the reverse-time Loewner flow \eqref{loewnerODE} to write the distance between the two solutions as the solution to an ordinary differential equation which we are able to linearize.

\begin{figure}
\begin{center}
\includegraphics[width=8cm]{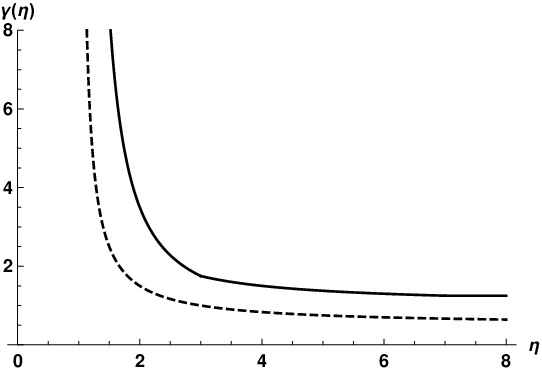}
 \caption{\textsl{Lower bounds on regularization exponents for $\mathrm{ALE}$ (solid) and the Markov model (dashed).}}
\label{glowerbounds}
\end{center}
\end{figure}

\subsubsection*{Notation}
Many of the estimates presented in this paper, especially in Section \ref{conformal}, are more precise than what is strictly needed for the proof of our main theorem, in that we frequently keep track of 
the dependence of constants on parameters, and similar. We have opted to record detailed versions to enable potential further applications where such dependencies may be important. Generic constants, which may change from line to line, will mainly be denoted by the capital letters $A$ and $B$.

Throughout, we use integer subscripts, or the letters $j$, $k$, and $n$, to denote building block maps, that is, rotated copies of a slit map aggregated to form the cluster maps $\Phi_n$ and $\Phi_n^*$. When we need to keep track of scaling, we use $f_{\capc k}$  (boldface subscript) to denote a slit map adding a single slit of logarithmic capacity $\capc k$ ($k=1,2,\ldots$) at the point $1$. Finally, a generic single-slit map centered at $1$ adding a slit of logarithmic capacity $t>0$ will be denoted $f_t$.

\section{Loewner flows}\label{loewner}
We shall make extensive use of Loewner techniques in this paper. Loewner equations describe the flow of families $\{\Psi_t\}_{t\geq 0}$ of conformal maps of a reference domain in $\CC\cup \{\infty\}$ onto evolving domains in the plane in terms of measures on the boundary.  We only give a very brief overview here, and refer the reader to \cite{Lawbook} and the references therein for a discussion of Loewner theory. 
\subsection{Loewner's equation}
Let $\{\mu_t\}_{t> 0}$ be a family of probability measures on the unit circle $\TT$, in this context referred to as driving measures, such that 
$t\mapsto \|\mu_t\|$ is locally integrable. Then the  Loewner partial differential equation for the exterior disk,
\begin{equation}
    \partial_t \Psi_t(z)=z\Psi_t'(z)\int_{\TT}\frac{z+\zeta}{z-\zeta} \dx \mu_t(\zeta) ,
\label{loewnerPDE}
\end{equation}
with initial condition 
\[\Psi_0(z)=z,\]
admits a unique solution $\{\Psi_t\}_{t\geq 0}$ called a Loewner chain \cite{Lawbook, CM01}. Each $\Psi_t(z)$ is a conformal map of the exterior disk onto a simply connected domain,
\[\Psi_t\colon \Delta\to D_t=\CC\cup\{\infty\}\setminus K_t\]
and at $\infty$ we have the power series expansion $\Psi_t(z)=e^{t}z+\mathcal{O}(1)$. The growing compact 
sets $\{K_t\}_{t\geq 0}$ are called hulls, satisfy $K_s\subsetneq K_t$ for $s<t$, and have $\mathrm{cap}(K_t)=e^t$ for $t\geq 0$, where 
$\textrm{cap}(K)$ denotes the  capacity of a compact set $K\subset \CC$.

The limit functions appearing in Theorem~\ref{thm:main-thm} can be realized in terms of Loewner chains, and in fact have a very simple Loewner representation.

\begin{example}[Growing a slit]
Let $\mu_{t}=\delta_1$, a point mass at $\zeta=1$. Then \eqref{loewnerPDE} reads
\[\partial_tf_t(z)=zf'_t(z)\frac{z+1}{z-1}.\]
With initial condition $f_0(z)=z$, the solution has the explicit representation (viz. \cite[p. 772]{MR05})
\begin{equation}
f_t(z)=\frac{e^{t}}{2z}\left( z^2+2(1-e^{-t})z+1+(z+1)\sqrt{z^2+2(1-2e^{-t})z+1}\right).
\label{slitexplicit}
\end{equation}
The solution precisely consists of the slit maps $f_t \colon \Delta\to \Delta \setminus (1,1+d(t)]$, where
\begin{equation}
d(t)=2e^t(1+\sqrt{1-e^{-t}})-2, \quad t>0.
\label{sticklength}
\end{equation}
This means that the growing hulls are $K_t=\overline{\DD}\cup(1,1+d(t)]$, the closed unit disk plus a radial slit emanating from $\zeta=1$. 
We note that the somewhat complicated expression in \eqref{slitexplicit} can be obtained by conjugating the simple formula for a slit map in the upper half-plane $\mathbb{H}=\{z\in  \CC \colon \mathrm{Im}(z)>0\}$, namely
\[
F_t(z)=\sqrt{z^2-4t},
\]
with suitable M\"obius transformations.
\end{example}
In this paper, we are mainly concerned with the case $\mu_t=\delta_{e^{i\xi_t}}$ for some function $\xi_t\colon (0,T] \to \RR$ and in that setting, we refer to $\xi_t$ as a driving term.

The conformal maps arising in $\mathrm{ALE}(\alpha, \eta)$ have the following simple Loewner representation. We first solve the Loewner equation with driving measure $\mu_t=\delta_{e^{i \xi_t}}$, where
\begin{equation}
\label{xidef}
\xi_t=\sum_{k=1}^N\theta_k\mathbf{1}_{(C_{k-1}, C_k]}(t),
\end{equation}
with $C_k=\sum_{j=1}^{k}c_k$, and the angles $\{\theta_k\}$ and logarithmic capacities $\{c_k\}$  given by \eqref{aleangles} and \eqref{alecaps}, respectively. Explicitly then, the Loewner problem associated with $\mathrm{ALE}(\alpha, \eta)$
reads
\begin{equation}
    \partial_t \Psi_t(z)=z\Psi_t'(z)\frac{z+e^{i\xi_t}}{z-e^{i\xi_t}} \quad \textrm{where}\quad \Psi_0(z)=z.
\label{loewnerPDEdrivingfunct}
\end{equation}
To obtain the composite $\mathrm{ALE}(\alpha, \eta)$-maps $\Phi_n$ described in Section \ref{intro}, we evaluate the solution to \eqref{loewnerPDEdrivingfunct} at $t=\capc n$; thus 
\[\Phi_n=\Psi_{\capc n}, \quad n=1,2,\ldots \,\,.\] The random driving function 
$\xi_t$ can be viewed as a c\`adl\`ag jump process exhibiting a complicated dependence structure encoded through angles and capacity increments. When $\alpha=0$, the dependence structure is only present in the distribution of the increments, as the jump times are deterministic, and equal to $\capc k$ for $k=1,2,\ldots$. We emphasize that this is the main technical reason why the $\mathrm{ALE}(0,\eta)$ model is easier to analyze then the general $\mathrm{ALE}(\alpha,\eta)$ model or the Hastings-Levitov model $\mathrm{HL}(\alpha)$.

\subsection{Reverse-time Loewner flow}
The Loewner equation \eqref{loewnerPDEdrivingfunct} is a first-order partial differential equation, and in the $\mathrm{ALE}(\alpha, \eta)$ model, it gives rise to a non-linear PDE problem since the driving measures depend on the maps $f_t$ via their derivatives. As is common in Loewner theory, we shall analyze solutions by passing to the backwards flow associated with \eqref{loewnerPDEdrivingfunct}: this essentially entails employing the method of characteristics to obtain an ordinary differential equation that describes the evolution at hand. See \cite{Lawbook, CM01} for detailed derivations and discussions.

Let $T>0$ be fixed. The equation for the backward or reverse-time flow in the exterior disk is
\begin{equation}
\partial_tu_t(z)=u_t(z)\frac{u_t(z)+e^{i\Xi_t}}{u_t(z)-e^{i\Xi_t}},
\label{loewnerODE}
\end{equation}
where we define
\[\Xi_t=\xi_{T-t}, \quad 0\leq t\leq T.\]
Then, setting $u_0(z)=z$, we obtain (see \cite[Chapter 4]{Lawbook})
\[u_T(z)=\Psi_T(z)\]
where $\Psi_t$ denotes the solution to the forward equation \eqref{loewnerPDEdrivingfunct} with driving function $\xi_t$. Note that this holds in general only at the special time $T$. 

The main advantage of the backward flow is the fact that, for each $z$, \eqref{loewnerODE} is now formally an ODE, simplifying the problem of analyzing and estimating the solution to the corresponding flow problem. Such analysis is carried out in Section \ref{conformal} and will be crucial in the proof of Theorem~\ref{thm:main-thm}.
 
\subsection{Convergence of Loewner chains}
Our strategy will be to argue that the driving function \eqref{xidef} arising in the $\mathrm{ALE}$ process is close, in the regime where $n\asymp \capc^{-1}$, to the constant driving function $\xi_t=\theta_1$. We would then like to argue that the resulting conformal maps are close. These kinds of continuity results have been established in several settings, see for instance \cite[Proposition 3.1]{JS09} and \cite[Proposition 1]{JST12}, and \cite{F1} for a more systematic discussion.

Since the $\mathrm{ALE}$ driving processes exhibit synchronous jumps, it is natural to measure distances between them in the uniform norm $\|\cdot\|_ {\infty}$. For $T>0$, we denote the space of piecewise continuous functions $\xi \colon [0, T) \to \RR$ endowed with this norm by $D_T$. We consider the space $\Sigma$ consisting of conformal maps $f(z)=Cz+\mathcal{O}(1)$ as $z\to \infty$, with $C>0$ uniformly bounded, and we endow $\Sigma$ with the topology of uniform convergence on compact subsets of $\Delta$. We then view the conformal maps $\Psi_t$, and hence the aggregate maps $\Phi_n$, as random elements of $\Sigma$. 

The following result is well-known, but we give a proof for completeness. (With additional work, one could obtain estimates on rates of convergence. We do not pursue this direction here, however see Remark 3 after Lemma \ref{ucomp}.)
\begin{Pro}\label{loewnercontprop}
Let $T>0$ be given. For $j=1,2$ let $\Psi_{t}^{(j)}, \, 0 \le t \le T,$ be the solution to the Loewner equation \eqref{loewnerPDE} with driving term $\xi_{t}^{(j)}$. For every $\ee > 0$ there exists $\delta=\delta(\ee,T)>0$ such that if $\|e^{i\xi^{(1)}}-e^{i\xi^{(2)}}\|_{\infty} < \delta$, then
\[
\sup_{0 \le t \le T} \sup_{\{|z| \ge 1+\ee\}}\left|\Psi^{(1)}_{t}(z) - \Psi^{(2)}_{t}(z) \right| < \ee.
\]
%
\end{Pro}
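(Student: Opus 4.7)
The plan is to pass to the reverse-time Loewner flow \eqref{loewnerODE} and, for each target time $s\in(0,T]$, compare the two resulting ODE flows by a Grönwall argument applied coordinate-wise in $z$. For $j\in\{1,2\}$, let $u^{(j),s}_t(z)$ solve \eqref{loewnerODE} with driving function $\Xi^{(j),s}_t=\xi^{(j)}_{s-t}$ and initial condition $u^{(j),s}_0(z)=z$, so that the reverse-flow identity recalled above gives $u^{(j),s}_s(z)=\Psi^{(j)}_s(z)$. Since $\|e^{i\Xi^{(1),s}}-e^{i\Xi^{(2),s}}\|_\infty\le\|e^{i\xi^{(1)}}-e^{i\xi^{(2)}}\|_\infty<\delta$, it suffices to control $v^s_t(z):=u^{(1),s}_t(z)-u^{(2),s}_t(z)$ uniformly in $s$ and $z$.

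The first step is to check forward-invariance of the annulus $\{|w|\ge 1+\varepsilon\}$ under the reverse flow. A direct calculation yields
\[
\partial_t\log|u^{(j),s}_t(z)|^2 \;=\; \frac{2\bigl(|u^{(j),s}_t(z)|^2-1\bigr)}{|e^{i\Xi^{(j),s}_t}-u^{(j),s}_t(z)|^2},
\]
which is non-negative whenever $|u^{(j),s}_t(z)|\ge 1$, so $|u^{(j),s}_t(z)|\ge 1+\varepsilon$ throughout the flow and in particular $|e^{i\Xi^{(j),s}_t}-u^{(j),s}_t(z)|\ge\varepsilon$ uniformly in $t$, $s$, $z$ and $j$. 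The second step is to show that the vector field $G(w,\zeta):=-w(\zeta+w)/(\zeta-w)$ is Lipschitz, with a single constant $C_\varepsilon$, on all of $\{|w|\ge 1+\varepsilon\}\times\TT$: both $\partial_w G$ and $\partial_\zeta G$ are continuous there, tend to the finite limits $1$ and $2$ as $|w|\to\infty$, and are explicitly controlled in terms of $\varepsilon$ near $|w|=1+\varepsilon$.

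Combining the two ingredients via the mean value theorem gives $|\partial_t v^s_t(z)|\le C_\varepsilon |v^s_t(z)|+C_\varepsilon\delta$ with $v^s_0(z)=0$. Grönwall's inequality then yields $|v^s_s(z)|\le C(\varepsilon,T)\,\delta$, uniformly in $s\in[0,T]$ and in $z$ with $|z|\ge 1+\varepsilon$, and the conclusion follows by choosing $\delta<\varepsilon/C(\varepsilon,T)$. The main technical point I anticipate is establishing the Lipschitz estimate for $G$ \emph{uniformly} over the unbounded annulus: the denominator $|\zeta-w|$ is small near the unit circle while the numerator grows linearly for large $|w|$, so one must verify that the prefactor $|w|$ arising in $\partial_w G=-(\zeta+w)/(\zeta-w)-2w\zeta/(\zeta-w)^2$ is tamed by the denominator at infinity. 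Once this balance is checked, running the reverse flow separately for each $s\in[0,T]$ delivers the sup-in-$t$-and-$z$ bound at no additional cost, and the c\`adl\`ag nature of the $\xi^{(j)}$ plays no role since the ODE theory only requires measurable drivers.
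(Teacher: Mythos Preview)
Your approach is essentially the paper's: pass to the reverse flow, use that $|u_t^{(j)}(z)|\ge 1+\ee$ is preserved, and bound the difference of the two vector fields to feed a Gr\"onwall/integrating-factor argument. The paper carries this out by writing the difference ODE explicitly as $\dot H-Hv=(W^{(1)}-W^{(2)})w$ with rational $v,w$ that are manifestly bounded by $A/\ee^2$, and then solving.

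One small point to tighten: you invoke the mean value theorem to get $|\partial_t v_t^s|\le C_\ee|v_t^s|+C_\ee\delta$, but the domain $\{|w|\ge 1+\ee\}$ is not convex, so the segment from $u^{(1),s}_t$ to $u^{(2),s}_t$ need not remain in it. Two easy fixes: either factor the difference algebraically, e.g.\ for fixed $\zeta\in\TT$,
\[
G(w_1,\zeta)-G(w_2,\zeta)=(w_1-w_2)\,\frac{w_1w_2-(w_1+w_2)\zeta-\zeta^2}{(\zeta-w_1)(\zeta-w_2)},
\]
which is what the paper's $v$ is and is clearly bounded on $\{|w_j|\ge 1+\ee\}$ without any convexity; or run a stopping-time bootstrap (stop when $|v_t^s|$ first reaches $\ee/2$, apply your Lipschitz bound on $\{|w|\ge 1+\ee/2\}$, and conclude the stopping time exceeds $T$ for $\delta$ small). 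Either way your argument goes through.
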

\begin{proof}
Fix $s \in [0,T]$ and consider the reverse-time Loewner equation \eqref{loewnerODE}. We let $u^{(j)}_{t}$ be the reverse flow driven by $\xi^{(j)}_{s-t}$ for $0 \le t \le s$. Write $W_{t}^{(j)} = e^{i\xi^{(j)}_{s-t}}$. Taking the difference and differentiating $H=u^{(1)}-u^{(2)}$ with respect to $t$ gives
\[
\dot H - H v= (W^{(1)}-W^{(2)}) w, 
\]
where
\[
v = v(t)= \frac{u^{(1)}u^{(2)}-W^{(1)}W^{(2)} - (1/2)(u^{(1)}+u^{(2)})(W^{(1)} + W^{(2)})}{(u^{(1)}-W^{(1)})(u^{(2)}-W^{(2)})}
\]
and
\[
w=w(t) = \frac{(u^{(1)}+u^{(2)})^{2} }{2(u^{(1)}-W^{(1)})(u^{(2)}-W^{(2)})}.
\]
Since the flows move away from the unit circle, these expressions show that there is a constant $A$ depending only on $T$ such that if $|z| \ge 1+\ee$ then for all $0 \le t \le s \le T$,
\[
\RRe v(t) \le A/\ee^{2}, \qquad |w(t)| \le A/\ee^{2}.
\]
Since $H(0)=0$,
\[
H(t) = \int_{0}^{t}\left[e^{\int_{s}^{t}v(r)dr} (W^{(1)}(s)-W^{(2)}(s)) w(s) \right]\dx s
\]
and consequently, for a different $T$-dependent $A$,
\[
\sup_{\{|z| \ge 1+\ee \}}|\Psi^{(1)}_{t}(z) -\Psi^{(2)}_{t}(z)|=\sup_{\{|z|\ge 1+\ee\}} |H(t)| \le \|W^{(2)}-W^{(1)}\|_{\infty}e^{A/\ee^{2}}A/\ee^{2}.
\]
Hence we can take $\delta< e^{-A/\ee^{2}}\ee^{3}/A$ and this is clearly uniform in $0 \le t  \le T$.
\end{proof}

Thus, we obtain convergence in law of conformal maps provided we can show convergence in law of driving processes. Note that in our main result we have convergence to a degenerate deterministic limit (modulo rotation). As is explained in \cite[Section 4.2]{JS09}, we 
can strengthen the convergence that follows from Proposition \ref{loewnercontprop} in this instance, and obtain convergence of $K_n$ with respect to the Hausdorff metric in $\Delta$.

\section{Analysis of the slit map}\label{slitder}
In our arguments, we shall need effective bounds on the derivative $f'_t(z)$ of the slit map, in order to estimate moments of angle sequences, among other things. 
An explicit formula for the slit map $f_t\colon \Delta \to \Delta\setminus(1,1+d(t)]$ was given in \eqref{slitexplicit}, while the length $d(t)$ of the growing slit is given by 
\eqref{sticklength}. We note that $f_t(1)=1+d(t)$, and that one can compute that $f_t(e^{i\beta_t})=f_t(e^{-i\beta_t})=1$ for 
\begin{equation}
\beta_t=2\arctan\left(\frac{d(t)}{2\sqrt{d(t)+1}}\right).
\label{basepointformula}
\end{equation}
We shall refer to $\exp(i\beta_t)$ and $\exp(-i\beta_t)$ as the base points of the slit. In our scaling limit results, we will make use of the facts that 
\begin{equation}
\frac{\beta_t}{d(t)}\to 1\quad \textrm{and}\quad \frac{d(t)}{2t^{1/2}} \to 1, \quad \textrm{as} \quad t \to 0.
\label{basepointasymps}
\end{equation}

\subsection{Pointwise estimates}
We begin by obtaining bounds on the (spatial) derivative of the slit map $f_t(z)$. To get a feeling for the overall behavior of these derivatives, it is instructive to first compute the derivative of the half-plane slit map, 
\[F_t'(z)=\frac{z}{(z^2-4t)^{1/2}}.\]
From this formula, it is apparent that $F_t'(z)$ has a zero at the point that is mapped to the tip of the slit, and square-root type singularities at points mapping to the base of the slit.
We show that the slit map in the exterior disk exhibits the same type of local behavior.
\begin{lemma}
\label{mobiusdecomp}
For all $t>0$ and $|z|>1$, we have
\begin{equation}
f'_t(z)=H_t(z)\frac{z-1}{\left(z-e^{i\beta_t}\right)^{1/2}\left(z-e^{-i\beta_t}\right)^{1/2}}
\label{slitdersimple}
\end{equation}
where $H_t(z)$ is holomorphic in $z$, has $\lim_{z\to \infty} H_t(z)=e^{t}$, and satisfies 
\[1 \leq |H_t(z)|\leq 4 e^t.\]
\end{lemma}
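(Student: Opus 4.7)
The plan is to derive the formula by direct computation via the chain rule applied to the factorization $f_t = m_{\Delta} \circ \tilde{f}_{d(t)} \circ m_{\HH}$ established in the preceding discussion. Writing $w = m_{\HH}(z) = i(z-1)/(z+1)$, this gives
\[
f'_t(z) \;=\; \frac{-2i}{(1+i\tilde{f}_{d(t)}(w))^{2}} \cdot \frac{C(t)\,w}{(w^{2}-\rho(t)^{2})^{1/2}} \cdot \frac{2i}{(z+1)^{2}}.
\]
The factor $w$ in the numerator immediately supplies the zero $(z-1)/(z+1)$; the remaining task is to rewrite $(w^{2}-\rho(t)^{2})^{1/2}$ in terms of $(z - e^{\pm i\beta_t})$ and to check that the left-over factors produce a bounded holomorphic function.

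The key algebraic step is the factorization
\[
(z-1)^{2} + \rho(t)^{2}(z+1)^{2} \;=\; (1+\rho(t)^{2})\,(z-e^{i\beta_t})(z-e^{-i\beta_t}),
\]
which follows from the half-angle relation $\rho(t) = \tan(\beta_t/2)$ implicit in \eqref{basepointformula}: this relation yields $\cos \beta_t = (1-\rho(t)^{2})/(1+\rho(t)^{2})$, so the right-hand side expands to $(1+\rho^{2})z^{2} - 2(1-\rho^{2})z + (1+\rho^{2})$, matching the left-hand side. Combining this identity with $w^{2} - \rho^{2} = -[(z-1)^{2} + \rho^{2}(z+1)^{2}]/(z+1)^{2}$ turns $(w^{2} - \rho(t)^{2})^{1/2}$ into a constant multiple of $(z-e^{i\beta_t})^{1/2}(z-e^{-i\beta_t})^{1/2}/(z+1)$ — with branch choices fixed so that the square root remains holomorphic on $\Delta$. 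Substituting back and collecting factors produces \eqref{slitdersimple}, together with an explicit rational expression for $H_t(z)$ involving $\tilde{f}_{d(t)}(w)$ and $(z+1)^{-2}$.

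To verify $H_t(\infty) = 1$, observe that $(z-1)/[(z-e^{i\beta_t})(z-e^{-i\beta_t})]^{1/2} \to 1$ while $f'_t(z) \to e^{t}$ as $z \to \infty$, so \eqref{slitdersimple} forces the normalization. For the uniform two-sided bounds, note that $f'_t$ has, on $\overline\Delta$, exactly a simple zero at $z=1$ (the preimage of the tip) and $-1/2$-type branch singularities at $z = e^{\pm i\beta_t}$ (the preimages of the two base points); after factoring these out, $H_t$ extends holomorphically and without zeros to a neighborhood of $\overline\Delta \cup \{\infty\}$. The function $(t,z) \mapsto H_t(z)$ is jointly continuous on the compact set $[0,1] \times \{1 \leq |z| \leq 2\}$ (with the limiting value $H_0 \equiv 1$ as $\beta_t, \rho(t), d(t) \to 0$ and $f_t \to \mathrm{id}$), so by compactness there is a single constant $A$ such that $A^{-1} \leq |H_t(z)| \leq A$ throughout the range of parameters.

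The main obstacle I anticipate is bookkeeping of branches: one must check that the various apparent square-root singularities appearing in the chain-rule expression really recombine into the single clean factor $(z-e^{i\beta_t})^{1/2}(z-e^{-i\beta_t})^{1/2}$ of \eqref{slitdersimple}, and that the residual factor $H_t$ is genuinely holomorphic and non-vanishing on a neighborhood of the closed annulus — rather than only on its interior. Once this is verified, the boundedness is a soft compactness argument and the normalization at infinity is immediate.
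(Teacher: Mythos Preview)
Your approach is correct but takes a genuinely different route from the paper. You compute $f'_t$ via the chain rule applied to the M\"obius decomposition $f_t = m_\Delta \circ \tilde{f}_{d(t)} \circ m_{\HH}$, extract the zero and branch singularities through the algebraic identity $(z-1)^{2} + \rho(t)^{2}(z+1)^{2} = (1+\rho(t)^{2})(z-e^{i\beta_t})(z-e^{-i\beta_t})$, and then bound the residual $H_t$ by a compactness argument on $[0,1] \times \{1 \le |z| \le 2\}$.

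The paper instead exploits the Loewner PDE $\partial_t f_t = z f'_t\,(z+1)/(z-1)$ to write $f'_t = \frac{z-1}{z(z+1)}\,\partial_t f_t$, and then differentiates the explicit formula \eqref{slitexplicit} with respect to $t$. This produces the closed-form identification $H_t(z) = z^{-1} e^{-t} f_t(z)$, from which the two-sided bounds are immediate: $|f_t(z)| \ge 1$ since $f_t$ maps into $\Delta \setminus (1,1+d(t)]$, and $|f_t(z)|$ is bounded above for $t \le 1$, $|z| \le 2$ by elementary distortion. The paper's route thus avoids any branch bookkeeping or continuity-in-$t$ argument, and the recognition of $H_t$ as a rescaled copy of $f_t$ itself is considerably cleaner than the implicit rational expression your chain-rule computation leaves. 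Your approach, on the other hand, is more self-contained in that it does not invoke the Loewner equation at all, and the half-angle factorization you record is a useful identity in its own right.
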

\begin{proof}
Since the slit map $f_t(z)$ solves the Loewner equation 
\[\partial_tf_t(z)=zf'_t(z)\frac{z+1}{z-1}\]
we have
\begin{equation}
f'_t(z)=\frac{z-1}{z(z+1)}\partial_tf_t(z).
\label{slitwrtt}
\end{equation}
Differentiating the explicit expression \eqref{slitexplicit} with respect to $t$, we find that
\[\partial_tf_t(z)=\frac{e^t}{2z}\frac{z+1}{\sqrt{(z+1)^2-4e^{-t}z}}\left((z+1)\sqrt{(z+1)^2-4e^{-t}z}+(z+1)^2-2e^{-t}z\right).\]
Inserting this into \eqref{slitwrtt}, we obtain
\[f'_t(z)=H_t(z)\frac{z-1}{\sqrt{(z+1)^2-4e^{-t}z}}\]
with
\[H_t(z)=\frac{e^{t}}{2z^2}\left[(z+1)\left(z+1+\sqrt{(z+1)^2-4e^{-t}z}\right)-2e^{-t}z\right].\]
It remains to show that $H_t(z)$ is bounded above and below. But this follows immediately upon writing $H_t(z)=z^{-1}f_t(z)$, where $f_t(z)$ is the 
slit map itself, and observing that $1 \leq |f_t(z)|/|z| \leq (1+ d(t)) \vee e^t \leq 4e^t$. 
Finally, we verify that $z_t=e^{i\beta_t}$ solves $(z+1)^2-4e^{-t}z=0$, and this leads to the factorization $(z+1)^2-4e^{-t}z=(z-e^{i\beta_t})(z-e^{-i\beta_t})$.
\end{proof}
\begin{figure}[ht!]
\begin{center}
\includegraphics[width=8cm]{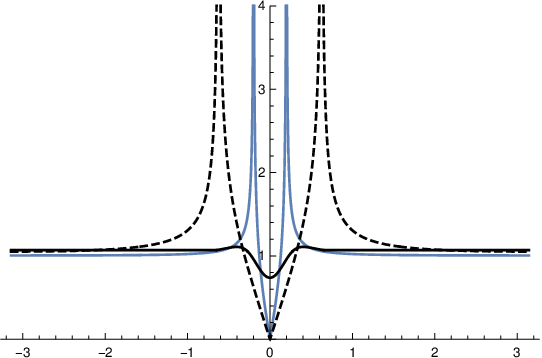}
\includegraphics[width=8cm]{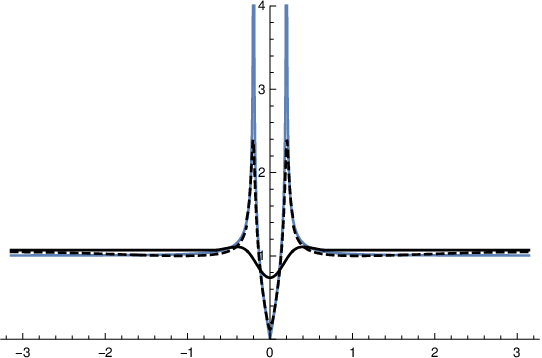}
\end{center}
  \caption{\textsl{Plots of $\theta\mapsto|f'_t(e^{\parsig+i\theta})|$. Left: $\parsig=0.0001$ fixed, $t=0.01$ (blue) and $t=0.1$ (dashed). Right: $t=0.01$ fixed,  $\parsig=0.0001$ (blue) and $\parsig=0.02$ (dashed)}. \newline Plot with $t=0.01$ and $\parsig=0.2$ (black) shown in both pictures for comparison.}
\label{slitderfig}
\end{figure}
Our analysis of the $\mathrm{ALE}$ model will require local estimates on the derivative of the slit map. Representative graphs of how $\theta\mapsto|f'_t(e^{\parsig+i\theta})|$ varies with $t$ and $\parsig$ are shown in Figure \ref{slitderfig}.

\begin{lemma}\label{slitmapderivativebds}
Fix $T>0$, let $0<t\leq T$ and suppose $|z|-1\leq d(t)$. Then the derivative of the slit map admits the following estimates, where $A_1$ and $A_2$ are non-zero constants depending only on $T$:
\begin{enumerate}
\item (Near the tip) For $|\arg z|<\frac{1}{2}\beta_t$, 
\begin{equation*}
A_1\frac{|z-1|}{d(t)}\leq |f'_{t}(z)|\leq A_2\frac{|z-1|}{d(t)}.
\end{equation*} 
\item (Near the base) For $|\arg z \pm \beta_t|\leq \frac{1}{2}\beta_t$,
\[A_1 \leq |f'_{t}(z)|\leq A_2 \frac{d(t)}{|z|-1}.\]
\item (Away from tip and base) For $\frac{3}{2}\beta_t < |\arg z| \leq \pi$,
\[A_1 \leq |f'_{t}(z)|\leq A_2.\]
\end{enumerate}

\end{lemma}
\begin{proof}
We treat the case $|\arg z|<\frac{1}{2}\beta_t$ first. In light of the global bounds on the function $H_t(z)$ from Lemma \ref{mobiusdecomp}, it suffices to estimate the square root expressions appearing in the denominator in \eqref{slitdersimple}.
We have
\begin{equation}
|z-e^{i\beta_t}|=|e^{\log|z|+i(\arg{z}-\beta_t)}-1|\asymp \left((\log|z|)^2+(\arg(z)-\beta_t)^2\right)^{1/2}\asymp \log |z| \vee d(t).
\label{auxmodelinsert}
\end{equation}
If $0<|z|-1\leq d(t)$ this yields 
\[|z-e^{i\beta_t}|^{1/2}|z-e^{-i\beta_t}|^{1/2}\asymp d(t),\]
as claimed.

Near the base, the same reasoning as before shows that
$|z-1|\asymp d(t)$.
On the other hand,
\[|z|-1\leq |z-e^{i\beta_t}|\leq |e^{\log|z|+i(\beta_t+\frac{1}{2}\beta_t)}-e^{i\beta_t}|\leq Ad(t),\]
where the lower bound is attained when $\arg(z)=\beta_t$. 
Combining these bounds leads to the claimed estimates for $|\arg z \pm \beta_t|\leq \frac{\beta_t}{2}$.

On each fixed radius, the function $v\colon \arg(z)\mapsto \left|\frac{z-1}{(z-e^{i\beta_t})^{1/2}(z-e^{-i\beta_t})^{1/2}}\right|$ is decreasing on $[\frac{3}{2}\beta_t,\pi]$, with $v(\pi)=(e^{\log|z|}+1)/((e^{\log|z|}+\cos\beta_t)^2+\sin^2\beta_t)^{1/2}\geq 1$. So in order to obtain the last set of estimates, it suffices to note that $v$
remains bounded above and below as $\arg(z) \to \frac{3}{2}\beta_t$, by the same arguments as before.
\end{proof}

\subsection{Moment computations}

We now return to random growth models and present the moment bounds that will be needed in Section \ref{slitconvergencesection}. 
As before, $\parsig>0$ is our regularization parameter, while $\eta>0$ is a model parameter. 

Define the normalization factor
\begin{equation}
Z^*_t=Z^*_t(\eta,\parsig)=\int_{\mathbb{T}}|f'_{t}(e^{\parsig+is})|^{-\eta}\rm{\dx} s.
\label{normfactor}
\end{equation}

 \begin{lemma}\label{slitmapvariance}
Fix $T>0$ and $\eta \geq 0$. There exist constants $A_1$ and $A_2$ depending only on $T$ and $\eta$ such that, for all
$0<t\leq T$, the total mass $Z_t^*$ satisfies the following. 
\begin{itemize}
\item ($\eta< 1$) For all $\parsig > 0$,
\begin{equation}
A_1\leq Z^*_t\leq A_2.
\end{equation}
In particular, $Z^*_t$ remains finite as $\parsig\to 0$.
\item ($\eta >1$)
For all $0<\parsig\leq t^{\frac{\eta}{2(\eta-1)}}$,
\begin{equation}
A_1d(t)^{\eta}\parsig^{-(\eta-1)}\leq Z^*_t\leq A_2d(t)^{\eta}\parsig^{-(\eta-1)}. 
\label{slitintegral}
\end{equation}
In particular, $Z^*_t$ diverges as $\parsig \to 0$ with $\parsig \ll t^{\frac{\eta}{2(\eta-1)}}$.
\end{itemize}

Moreover, for $\eta>1$ and $0<\parsig\leq t^{\frac{\eta}{2(\eta-1)}}$ we have the following estimates: 
\begin{enumerate}
\item (Near the tip) For $|\theta|<\frac{\beta_t}{2}$,
\begin{equation*}
A_1\frac{1}{\parsig}\left(1+\left(\frac{\theta}{\parsig}\right)^2\right)^{-\eta/2}\leq \frac{1}{Z^*_t}|f'_{t}(e^{\parsig+i\theta})|^{-\eta}\leq A_2\frac{1}{\parsig}\left(1+\left(\frac{\theta}{\parsig}\right)^2\right)^{-\eta/2}.
\end{equation*} \item (Near the base) For $|\theta-\beta_t|\leq \frac{1}{2}\beta_t$,
\[A_1\parsig^{2\eta-1}d(t)^{-2\eta}\leq \frac{1}{Z^*_t}|f'_{t}(e^{\parsig+i\theta})|^{-\eta}\leq A_2 \parsig^{\eta-1}d(t)^{-\eta}.\]
\item (Away from the tip and base) For $\frac{3}{2}\beta_t < |\theta| \leq \pi$,
\[A_1\parsig^{\eta-1}d(t)^{-\eta}\leq \frac{1} {Z^*_t}|f'_{t}(e^{\parsig+i\theta})|^{-\eta}\leq A_2\parsig^{\eta-1}d(t)^{-\eta}.\]
\end{enumerate}
\end{lemma}
\begin{proof}
We begin by treating the case $\eta<1$. In light of Lemma \ref{slitmapderivativebds}, non-trivial global bounds on $Z_t^*$ from above and below follow immediately from the bounds on $|f_t'(e^{\parsig + is})|$ for $|s|>\frac{3}{2}\beta_t$ provided the contribution from $(-\frac{\beta_t}{2}, \frac{\beta_t}{2})$ is finite.
Hence it suffices to estimate the integral
\[\int_{-\frac{\beta_t}{2}}^{\frac{\beta_t}{2}}|f_t'(e^{\parsig+is})|^{-\eta}\dx s\asymp Ad(t)\int_0^{\frac{\beta_t}{2}}\frac{1}{(\parsig^2+s^2)^{\eta/2}}\dx s,\]
where we have used that $A_1<|e^{\parsig+is}-1|/(\parsig^2+s^2)^{1/2}<A_2$ for $s, \parsig$ small. Next, we note that
\[\int_0^{\frac{\beta_t}{2}}\frac{1}{(\parsig^2+s^2)^{\eta/2}}\dx s\leq \int_{0}^{\frac{\beta_t}{2}}\frac{1}{s^{\eta}}\dx s,\]
and the latter integral is bounded for $0<t<T$ since $\eta<1$.

We turn to the case $\eta>1$.  Since the integral $\int |f_{t}'(e^{\parsig+is})|^{-\eta}\dx s$ now diverges as $\parsig \to 0$ due to the singularity at $s=0$, it again suffices to estimate the contribution coming from $|s|<\beta_t/2$ in order to establish \eqref{slitintegral}. We have
\begin{align*}\int_0^{\frac{\beta_t}{2}}|f_t(e^{\parsig+is})|^{-\eta}\dx s&\leq Ad(t)^{\eta}\int_0^{\frac{\beta_t}{2}}(\parsig^2+s^2)^{-\eta/2}\dx s\\&
\leq Ad(t)^{\eta}\parsig^{-\eta}\int_0^{\frac{\beta_t}{2\parsig}}
\parsig(1+u^2)^{-\eta/2}\dx u
\end{align*}
after a change of variables. Since $\int_0^{\infty}(1+u^2)^{-\eta/2}\dx u$ is now finite, the upper bound follows. Similar reasoning together with the assumption that $\parsig\leq t^{1/2}$ yields the lower bound on the integral. The estimates on the normalized derivative follow upon dividing through by $Z_t^*$ in Lemma \ref{slitmapderivativebds}.
\end{proof}

We now turn to moment bounds for $\eta>1$.
\begin{lemma}\label{slitmapmoments}\label{slitmapmomentbounds}
For all $\eta$ and $\parsig>0$,
\[\int_{-\pi}^\pi \theta \,\frac{1}{Z_t^*}|f'_{t}(e^{\parsig+i\theta})|^{-\eta}\dx\theta=0.\]

Now suppose $\eta>1$ and $\parsig$ satisfies the hypotheses of Lemma \ref{slitmapvariance}.
Let $x\in (\parsig, \frac{\beta_t}{2})$. Then, for $1<\eta< 3$, we have
\[A_1x^{3-\eta}\parsig^{\eta-1}\leq \int_{-x}^{x}\theta^2\frac{1}{Z^*_t}|f_t'(e^{\parsig+i\theta})|^{-\eta}\dx \theta\leq A_2x^{3-\eta}\parsig^{\eta-1},\]
and for $\eta=3$, we have
\[A_1\parsig^2\log(x\parsig^{-1})\leq \int_{-x}^{x}\theta^2\frac{1}{Z^*_t}|f_t'(e^{\parsig+i\theta})|^{-\eta}\dx \theta \leq A_2\parsig^2\log(x\parsig^{-1}).\]

For $\eta>3$, we have
\[A_1\parsig^2\leq \int_{-x}^x\theta^2\, \frac{1}{Z_t^*}|f'_{t}(e^{\parsig+i\theta})|^{-\eta}\dx\theta\leq A_2\parsig^2.\]

Under the same assumptions as above,
for $1<\eta< 2$, we have
\[A_1x^{2-\eta}\parsig^{\eta-1}\leq \int_{-x}^{x}|\theta|\frac{1}{Z^*_t}|f_t'(e^{\parsig+i\theta})|^{-\eta}\dx \theta\leq A_2x^{2-\eta}\parsig^{\eta-1},\]
and for $\eta=2$,
\[A_1\parsig \log(x\parsig^{-1})\leq \int_{-x}^{x}|\theta|\frac{1}{Z^*_t}|f_t'(e^{\parsig+i\theta})|^{-\eta}\dx \theta \leq A_2\parsig \log(x\parsig^{-1}).\]
Finally, for $\eta>2$,
\[A_1\parsig \leq \int_{-x}^x|\theta|\, \frac{1}{Z_t^*}|f'_{t}(e^{\parsig+i\theta})|^{-\eta}\dx\theta\leq A_2\parsig .\]
\end{lemma}
\begin{proof}
The statement that $\int \theta|f'_t(e^{\parsig+i\theta})|^{-\eta}\dx\theta=0$ follows immediately from symmetry of the function $\theta\mapsto |f'_t(e^{\parsig+i\theta})|$ for each $\parsig$ and $ t$. 

We turn to second moments, and deal with the parameter range $1<\eta\leq 3$ first. 
By Lemma \ref{slitmapvariance},
\[\int_{-x}^{x}\theta^2\frac{1}{Z_t^*}|f_t'(e^{\parsig+i\theta})|^{-\eta}\dx\theta=2\int_0^{x}\theta^2\frac{1}{Z_t^*}|f_t'(e^{\parsig+i\theta})|^{-\eta}\dx\theta\asymp \parsig^2\int_0^{x}\frac{\left(\frac{\theta}{\parsig}\right)^2}{\left(1+(\frac{\theta}{\parsig})^2\right)^{\eta/2}}\frac{\dx \theta}{\parsig}.\]
Performing a change of variables, and assuming  $\eta< 3$, we obtain the integral
\begin{align*}
\parsig^2\int_0^{\frac{x}{\parsig}}u^2(1+u^2)^{-\eta/2}\dx u&=\parsig^2\int_0^1u^2(1+u^2)^{-\eta/2}\dx u+\parsig^2\int_1^{\frac{x}{\parsig}}u^2(1+u^2)^{-\eta/2}\dx u\\
&\asymp A_1\parsig^2+A_2\parsig^2\int_1^{\frac{x}{\parsig}}u^{2-\eta}\dx u \\
&=A_1\parsig^2+A_2\frac{1}{3-\eta}\parsig^2x^{3-\eta}\parsig^{\eta-3}\\
&=A_1\parsig^2+A_2x^{3-\eta}\parsig^{\eta-1},
\end{align*}
as claimed. An obvious modification of the argument leads to bounds for $\eta=3$.

Finally, we treat the case $\eta>3$ and show that the second moment decays like $\parsig^2$ independently of  $\eta$. 
It now suffices to examine
\[\int_{|\theta|<x}\theta^2\,\frac{1}{Z_t}|f_t'(e^{\parsig+i\theta})|^{-\eta}\dx\theta \asymp2\parsig^2\int_{0}^{\frac{x}{\parsig}}u^2(1+u^2)^{-\eta/2}\dx u.\]
The integral on the right now converges since $\eta>3$, and in fact 
\[\int_0^{\infty}u^2(1+u^2)^{-\eta/2}\dx u=\frac{\sqrt{\pi}}{4}\frac{\Gamma(\frac{\eta-3}{2})}{\Gamma(\frac{\eta}2)}.\]
To get the lower bound, 
we use the assumption $1<x/\parsig$ to bound the integral from below. The second assertion of the Lemma follows.

Analogous calculations lead to the quoted bounds on the first moments.
\end{proof}

\section{Ancestral lines and convergence for $\mathrm{ALE}$}\label{slitconvergencesection}
We now present a proof of our main convergence theorem, conditional on technical results proved in the final section of the paper, and discuss possible extensions of our results. 
\subsection{Convergence in the Markovian model}\label{Markovproof}
We first prove Theorem \ref{thm: auxslitconv}, which we restate for the reader's convenience. Recall that $K_{n(t)}^*$ is the cluster associated with $\Phi^*_{n(t)}$, and the event
\[
\Omega_N^* = \{ \mbox{Particle $j$ in the $\ast$-model has parent $j-1$ for all } j=1, \dots, N \}.
\]

\begin{customthm}{\ref{thm: auxslitconv}}
Set $\sigma_0 = \capc^{\gamma^*}$ where
\[
\gamma^* > \frac{\eta+1}{2(\eta-1)}.
\]
Then
\begin{align*}
\lim_{\capc \to 0} \inf_{0<\parsig<\sigma_0} \mathbb{P}(\Omega_N^*) &= 1 \quad \mbox{if } \eta > 1 \\
\lim_{\capc \to 0} \sup_{\parsig>0} \mathbb{P}(\Omega_N^*) &=0 \quad \mbox{if } \eta<1.  \label{clustersfig}
\end{align*} 
Furthermore, when $\eta > 1$ and $\parsig<\sigma_0$, for any $r>1$ and $T<\infty$,
\[
\sup_{t \leq T} \sup_{\{|z|>r\}}|\Phi^*_{n(t)}(z) - e^{i\theta_1^*}f_t(e^{-i\theta_1^*}z) | \to 0 \quad \textrm{ in probability as }\quad \capc\to 0,
\]
and the cluster $K^*_{n(t)}$ converges in the Hausdorff topology to a disk with slit of logarithmic capacity $t$ attached at position $z=e^{i\theta_1^*}$.
\end{customthm}

\begin{proof}
Since we can always rotate the clusters $K_n^*$ by a fixed angle, without loss of generality, we assume that the initial angle $\theta_1^*=0$.
As explained in Section \ref{overview}, we choose to sample $\theta_k^*$ from the interval $[\theta_{k-1}^*-\pi, \theta_{k-1}^*+\pi)$. This means that we can write $\theta_n^* = u_2+\cdots+u_n$ where the $u_k$ are independent $[-\pi,\pi)$-valued random variables and $u_k=\theta_k^*-\theta_{k-1}^*$ has symmetric distribution $h_k^*(\theta|0)$. 

First suppose $\eta>1$. Then by \eqref{basepointasymps} and Lemma \ref{slitmapvariance} there exists some constant $A$ (which may change from line to line), depending only on $T$ and $\eta$, such that for all $k\leq N$,
\[
A^{-1} (k\capc)^{1/2} < \beta_{k\capc} < A(k\capc)^{1/2},
\]
\[
\frac{A^{-1}}{\parsig} \left ( 1 + \frac{\theta^2}{\parsig^2} \right )^{-\eta/2} \leq h^*_k(\theta|0) \leq \frac{A}{\parsig} \left ( 1 + \frac{\theta^2}{\parsig^2} \right )^{-\eta/2}
\quad \mbox{for $|\theta| < \frac{\beta_{k\capc}}{2}$,}\]
and 
\[
h^*_k(\theta|0) \leq A \parsig^{\eta-1} (\capc k)^{-\eta/2} \quad \mbox{for $|\theta|>\frac{\beta_{k\capc}}{2}$.}
\]
Therefore
\[
\mathbb{P}\left(|u_k|\geq \frac{\beta_{\capc}}{2}\right) =2\int_{\frac{\beta_{\capc}}{2}}^{\frac{\beta_{k\capc}}{2}}h^*_{k}(\theta|0)\dx \theta
+2\int_{\frac{\beta_{k\capc}}{2}}^{\pi}h^*_{k}(\theta|0)\dx \theta\leq A (\parsig^{\eta-1}\capc^{\frac{1}{2}(1-\eta)}+\parsig^{\eta-1} (\capc k)^{-\eta/2}).
\]
Hence, for $\eta>1$,
\begin{equation*}
\mathbb{P}((\Omega_N^*)^c) \leq \mathbb{P}\left(\sup_{2 \leq k \leq N}|\theta^*_k - \theta^*_{k-1}| \geq \frac{\beta_{\capc}}{2}\right) 
\leq \sum_{k=2}^N \mathbb{P}\left(|u_k| \geq \frac{\beta_{\capc}}{2}\right) 
\leq A \parsig^{\eta-1} \capc^{-\frac{1}{2}(\eta-1)}\capc^{-1}\longrightarrow 0
\end{equation*}
as $\capc \to 0$. 

Now suppose that $\eta<1$ and $\parsig \to 0$ as $\capc \to 0$. 
Using Lemma \ref{slitmapderivativebds} and setting $|z|=e^{\parsig}$ in \eqref{auxmodelinsert}, and then letting $\capc\to 0$, we get
\begin{equation*}
\mathbb{P}(\Omega_N^*) \leq \mathbb{P}\left(|\theta_2^*| < \beta_{\capc}\right) 
\leq A \left( \int_0^{\frac{\beta_{\capc}}{2}} \frac{\capc^{\eta/2}\vee \parsig^{\eta}}{(\parsig^2 + \theta^2)^{\eta/2}}\dx \theta+\int_{\frac{\beta_{\capc}}{2}}^{\beta_{\capc}}\dx \theta\right) 
\leq A \capc^{1/2} \vee \parsig^{\eta} \longrightarrow 0.
\end{equation*}
If $\parsig$ is bounded below, then $h_k^*$ is uniformly bounded above and below, and $\PP(|u_k|\leq \beta_{\capc})=2\int_0^{\beta_{\capc}}h_k^*(\theta|0)d\theta \to 0$ since $\beta_{\capc}\to 0$ with $\capc$.

To show convergence of $\Phi_{n(t)}^*(z)$ to $f_t(z)$ for $t<T$ when $\eta>1$ and $\parsig<\sigma_0$, by Proposition \ref{loewnercontprop} it is enough to show that $\sup_{n \leq N}|\theta^*_n| \to 0$
with high probability as $\capc \to 0$.
To do this, we write
\[
\theta^*_n = \sum_{k=2}^n u_k \mathbf{1}_{\{|u_k| < \beta_{\capc}/2\}} + \sum_{k=2}^n u_k \mathbf{1}_{\{|u_k| \geq \beta_{\capc}/2\}}
\]
and note that $M^*_n = \sum_{k=2}^n u_k \mathbf{1}_{\{|u_k| < \beta_{\capc}/2\}}$ is a martingale. By the same argument as used to show $\mathbb{P}((\Omega_N^*)^c) \to 0$,
\[
\mathbb{P} \left ( \theta_n^*=M_n^* \mbox{ for all } n \leq N \right ) \geq 1 - A \parsig^{\eta-1} \capc^{-\frac{1}{2}(\eta-1)}\capc^{-1} \to 1.
\]
Convergence of $\sup_{n\leq N}|\theta^*_n|$ to $0$ follows from moment bounds in Lemma \ref{slitmapmoments} together with standard martingale arguments (viz. the proof of Theorem \ref{slitstheorem}).   
\end{proof}

\subsection{The ancestral lines and convergence theorem}
We now return to the $\mathrm{ALE}(0,\eta)$ process and show how the bounds obtained above, together with certain comparison results that will be proved in the next section, allow us to prove the analogue of Theorem \ref{thm: auxslitconv} for
 the $\Phi_n$ maps that generate $\mathrm{ALE}(0,\eta)$ clusters.

Without loss of generality we may set $\theta_1=0$. Let
\begin{equation}
\label{hdef}
h_k(\theta)=\frac{1}{Z_k}|\Phi_{k-1}'(e^{\parsig+i \theta})|^{-\eta}, \quad k=2, 3, \ldots
\end{equation}
denote the density functions conditional on $\mathcal{F}_{k-1}$ associated with the angle sequence $\{\theta_k\}$ of the $\mathrm{ALE}(0, \eta)$-model with model parameter $\eta \in  \RR$, particle capacity parameter $\capc \in (0,1)$ and regularization parameter $\parsig \in (0,1)$. 
As usual, let $\mathcal{F}_k$ be the $\sigma-$algebra generated by $\theta_1, \dots, \theta_k$.

We first state a precise estimate for how well $|\Phi'_n(e^{\parsig + i \theta})|$ can be approximated by $|(f^{\theta_{n}}_{n \capc})'(e^{\parsig + i \theta})|$. In Section \ref{overview}, we discussed how the intermediate particles are visible in the derivative of $\Phi_n(z)$ in a way they are not in $f_{n \capc}^{\theta_n}(z)$ (see Figure \ref{singularities}). The estimates below capture this discrepancy.  

\begin{lemma}
\label{phinbounds}
Fix $T>0$, let $n \leq \lfloor T/\capc \rfloor$ and set $\epsilon_n=(e^\parsig - 1) \vee \sup_{k\leq n} |\theta_k|$. 
\begin{itemize}
\item[(i)] 
There exists some absolute constant $A>1$, such that if $|\theta-\theta_n|<\capc^{1/2}$ and $\epsilon_n  < A^{-1} \capc^{1/2}$, then
\begin{equation}
\label{rateq}
 \left| \left| \frac{\Phi'_n(e^{\parsig + i \theta})}{(f^{\theta_{n}}_{n \capc})'(e^{\parsig + i \theta})} \right | - 1 \right | < A \epsilon_n^2 \capc^{-1}.
\end{equation}
\item[(ii)] There exist absolute constants $A$ and $B$ only dependent on $T$, such that  if $\epsilon_n \leq A^{-1} \capc^{1/2}$, then 
\[
\left | \Phi'_n(e^{\parsig + i \theta}) \right | \geq B^{-1} \epsilon_n^{-1} \parsig (1 - \cos(\theta-\theta_n))^{1/2}.
\]
\end{itemize}
\end{lemma}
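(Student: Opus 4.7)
The plan is to deduce both parts from the refined Loewner estimates of Section \ref{conformal}, after a rotation that places the last angle $\theta_n$ at $0$ and a Markov decomposition that moves the evaluation point safely away from the unit circle.

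For part (i), since the capacity increments in $\mathrm{ALE}(0,\eta)$ are constant, both $\Phi_n$ and $f^{\theta_n}_{n\capc}$ factor through the same inner slit map: $\Phi_n=\Phi_{n-1}\circ f^{\theta_n}_{\capc}$ and $f^{\theta_n}_{n\capc}=f^{\theta_n}_{(n-1)\capc}\circ f^{\theta_n}_{\capc}$. Differentiating, the common outer factor $(f^{\theta_n}_{\capc})'(z)$ cancels in the ratio, leaving
\[
\frac{\Phi'_n(z)}{(f^{\theta_n}_{n\capc})'(z)}=\frac{\Phi'_{n-1}(w)}{(f^{\theta_n}_{(n-1)\capc})'(w)},\qquad w=f^{\theta_n}_{\capc}(z).
\]
Conjugating by the rotation $\hat\Phi_{n-1}(\zeta)=e^{-i\theta_n}\Phi_{n-1}(e^{i\theta_n}\zeta)$ produces a Loewner chain with driving $\xi_t-\theta_n$ of sup-norm at most $2\eps_n$, and since rotation preserves moduli of derivatives, the task reduces to bounding $|\hat\Phi'_{n-1}(\hat w)/f'_{(n-1)\capc}(\hat w)|$ at $\hat w=f_{\capc}(e^{\parsig+i(\theta-\theta_n)})$. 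The hypotheses $\eps_n<A^{-1}\capc^{1/2}$ and $|\theta-\theta_n|<\capc^{1/2}$ place the argument of $f_{\capc}$ inside the regime of Lemma \ref{slitmapmove}, which then yields $|\hat w|-1\ge\tfrac{1}{10}\capc^{1/2}$ and $|\arg\hat w|\le 2(e^\parsig-1)\le 2\eps_n$.

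I would next invoke Proposition \ref{tipderivthm}: its admissibility condition $\|\xi-\theta_n\|+|\arg\hat w|\le A^{-1}e^{-T}(|\hat w|-1)$ boils down to $\eps_n\lesssim\capc^{1/2}$, which is exactly the hypothesis provided $A$ is chosen large enough to absorb $e^T$ and the factor $\tfrac{1}{10}$, and the proposition then delivers $|\log|\hat\Phi'_{n-1}(\hat w)/f'_{(n-1)\capc}(\hat w)||\lesssim e^T\eps_n^2/\capc$. The hypothesis forces this quantity to be at most $1/2$, so exponentiating via $|e^x-1|\le 2|x|$ converts the logarithmic bound into \eqref{rateq}. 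For part (ii), the same rotation produces $\hat\Phi_n$ with driving of sup-norm at most $2\eps_n$ and $|\Phi'_n(e^{\parsig+i\theta})|=|\hat\Phi'_n(e^{\parsig+i(\theta-\theta_n)})|$, so Proposition \ref{phiprimebounds} gives
\[
|\Phi'_n(e^{\parsig+i\theta})|\ge\frac{(e^\parsig-1)(1-\cos(\theta-\theta_n))}{B\sqrt{e^{n\capc}-1}\,(\|\xi-\theta_n\|_{n\capc}+e^\parsig-1)};
\]
applying $e^\parsig-1\ge\parsig$, $\sqrt{e^{n\capc}-1}\asymp(n\capc)^{1/2}$ (since $n\capc\le T$), $\|\xi-\theta_n\|_{n\capc}\le 2\eps_n$, and $e^\parsig-1\le\eps_n$ yields the stated lower bound up to the constant.

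The main obstacle is the tip estimate of part (i): Proposition \ref{tipderivthm} cannot be applied directly to $\Phi_n$ at $z=e^{\parsig+i\theta}$, since its admissibility requires the driving fluctuation to be controlled by the boundary distance $|z|-1\asymp\parsig$, a condition violated whenever $\parsig$ is much smaller than $\capc^{1/2}$ (the interesting regime). The Markov splitting above is precisely what pushes the evaluation point out to scale $\capc^{1/2}$, where the hypothesis $\eps_n<A^{-1}\capc^{1/2}$ finally lies inside the admissibility region; carrying out this calibration together with the appeal to Lemma \ref{slitmapmove} is the key technical step.
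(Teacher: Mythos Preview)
Your proposal is correct and follows essentially the same route as the paper's proof: the chain-rule splitting $\Phi_n=\Phi_{n-1}\circ f^{\theta_n}_{\capc}$ to cancel the inner factor, the appeal to Lemma~\ref{slitmapmove} to push $w$ to distance $\gtrsim\capc^{1/2}$ from the circle, and the invocation of Proposition~\ref{tipderivthm} (respectively Proposition~\ref{phiprimebounds} for part~(ii)) after a rotation are exactly the steps the paper takes. The only cosmetic difference is that in part~(i) the paper applies Proposition~\ref{tipderivthm} directly to $\Phi_{n-1}$ (whose driving is already bounded by $\epsilon_n$) rather than first rotating by $\theta_n$, but since $|\theta_n|\le\epsilon_n$ these are equivalent up to constants.
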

The proof of Lemma \ref{phinbounds} relies on a refined analysis of solutions to the Loewner equation in the case where driving functions are uniformly close, and will be presented in Section \ref{conformal} to avoid interrupting the flow of the proof of the main theorem below.

We now prove our main result. For fixed $T>0$, set $N=\lfloor T/\capc \rfloor$. Recall the definition of $\Omega_N$ from Section \ref{overview}.

\begin{theorem}\label{slitstheorem}
Set $\sigma_0 = \capc^{\gamma}$ for
\[ 
\gamma > \frac{5}{4} \vee \frac{(2\lambda+1)\eta+1}{2(\eta-1)}, 
\]
where
\[ 
\lambda = \lambda(\eta) = 
\begin{cases} 
 \frac{1}{\eta-1} &\mbox{ if } 1< \eta < 3; \\
 \frac{1}{2} & \mbox{ if }\eta \geq 3.
\end{cases} 
\]

Then, for all $T < \infty$,
\begin{align*}
\lim_{\capc \to 0} \inf_{0<\parsig<\sigma_0} \mathbb{P}(\Omega_N) &= 1 \quad \mbox{if } \eta > 1 \\
\lim_{\capc \to 0} \sup_{\parsig>0} \mathbb{P}(\Omega_N) &=0 \quad \mbox{if } \eta<1. 
\end{align*} 

Furthermore, when $\eta > 1$ and $\parsig<\sigma_0$, for any $r>1$ and $T < \infty$,
\[
\sup_{t \leq T} \sup_{|z|>r}|\Phi_{n(t)}(z) - f_t(z) | \to 0 \quad \textrm{ in probability as }\quad \capc\to 0,
\]
and hence the cluster $K_{n(t)}$ converges in the Hausdorff topology to a disk with slit of logarithmic capacity $t$ attached at position $1$.
\end{theorem}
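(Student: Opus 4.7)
The plan is to mirror the argument of Theorem \ref{thm: auxslitconv}, replacing the Markovian density $h_k^*$ by the true ALE density $h_k$ of \eqref{hdef} and using Lemma \ref{phinbounds} to quantify the approximation. Without loss of generality take $\theta_1=0$, and introduce the good event
\[
G_n = \Omega_n \cap \bigl\{\sup_{j \leq n} |\theta_j| \leq \rho\bigr\},
\]
where $\rho = \rho(\capc)$ will be chosen small enough that on $G_{n-1}$ the hypothesis $\epsilon_{n-1} \leq A^{-1}\capc^{1/2}$ of Lemma \ref{phinbounds}(i) is satisfied. The starting point is the union bound
\[
\mathbb{P}(\Omega_N^c) \leq \mathbb{P}(G_N^c \cap \Omega_N) + \sum_{k=2}^N \mathbb{P}\bigl(\{|\theta_k - \theta_{k-1}| \geq \beta_\capc/2\} \cap G_{k-1}\bigr).
\]

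For each term in the sum I would split the conditional density given $\mathcal{F}_{k-1}$ into a near-tip piece on $|\theta - \theta_{k-1}| < \capc^{1/2}$ and its complement. Near the tip Lemma \ref{phinbounds}(i) yields $|\Phi'_{k-1}|^{-\eta} = |(f^{\theta_{k-1}}_{(k-1)\capc})'|^{-\eta}(1 + O(\epsilon_{k-1}^2\capc^{-1}))$, so combined with Lemma \ref{slitmapvariance} the near-tip contribution to $\mathbb{P}(|\theta_k-\theta_{k-1}| \geq \beta_\capc/2 \mid \mathcal{F}_{k-1})$ is of the same order as in the Markovian model, producing the term $\parsig^{\eta-1}\capc^{(1-\eta)/2}$ familiar from the proof of Theorem \ref{thm: auxslitconv}. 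On the complementary region Lemma \ref{phinbounds}(ii) furnishes a lower bound on $|\Phi'_{k-1}|$, and using Lemma \ref{phinbounds}(i) together with Lemma \ref{slitmapvariance} on the near-tip region gives a matching lower bound on $Z_{k-1}$. The resulting upper bound on $h_k$ far from the tip is cruder than in the Markovian case, with an extra factor whose size is governed by $\rho$; this cost is precisely what produces the exponent $\lambda(\eta)$ appearing in the theorem. Summing both contributions over the $N \asymp \capc^{-1}$ values of $k$ and optimising in $\rho$ yields the stated threshold $\gamma > (2(\lambda+1)\eta+1)/(2(\eta-1))$.

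To bootstrap and control $\mathbb{P}(G_N^c \cap \Omega_N)$, decompose $\theta_n = M_n + B_n$, where $M_n$ is the martingale formed from the centred, truncated increments $(\theta_k - \theta_{k-1}) \mathbf{1}_{|\theta_k - \theta_{k-1}| < \beta_\capc/2} - \mathbb{E}[\,\cdot \mid \mathcal{F}_{k-1}]$ and $B_n$ absorbs the conditional biases. On $\Omega_N$ the truncation does not affect the sum, and the conditional variance of each $M_n$-increment is controlled by the near-tip approximation to $h_k$ together with the moment bounds of Lemma \ref{slitmapmoments}; the conditional bias $\mathbb{E}[u_k \mid \mathcal{F}_{k-1}]$ is small because the dominant Markovian density is symmetric about $\theta_{k-1}$, and the near-tip error from Lemma \ref{phinbounds}(i) is of lower order. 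A maximal inequality then produces $\sup_{n \leq N}|M_n| \leq \rho$ with high probability, closing the bootstrap. The case $\eta < 1$ reduces immediately to the first step: since $\Phi_1 = f^{\theta_1}_{\capc}$, the argument for Theorem \ref{thm: auxslitconv} gives $\mathbb{P}(\Omega_N) \leq \mathbb{P}(|\theta_2|<\beta_\capc) \to 0$ without modification.

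The convergence $\Phi_{n(t)} \to f_t$ then follows from Proposition \ref{loewnercontprop}: on $G_N$ the piecewise-constant ALE driving function $\xi_t$ lies within $2\rho$ of $0$ uniformly on $[0,T]$, and Hausdorff convergence of $K_{n(t)}$ to the slit is inherited as remarked after that proposition. The main obstacle is the circular nature of the bootstrap, since the comparison $h_k \approx h_k^*$ only holds on $G_{k-1}$ while $G_k$ is itself defined by an angle bound that depends on samples from $h_k$; this is compounded by the fact that the away-from-tip estimate coming from Lemma \ref{phinbounds}(ii) is much rougher than the near-tip estimate, and it is precisely this roughness that forces the larger exponent $\gamma$ compared with the clean $\gamma^* = (\eta+1)/(2(\eta-1))$ obtained for the Markovian model.
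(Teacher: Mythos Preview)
Your overall architecture matches the paper's: work on a good event where Lemma \ref{phinbounds} applies, compare $h_k$ to $h_k^*$ near the tip, use Lemma \ref{phinbounds}(ii) away from the tip, and run a martingale argument on the angle increments. The $\eta<1$ case and the final appeal to Proposition \ref{loewnercontprop} are also as in the paper. However, there is a genuine gap in the martingale step for $1<\eta<3$.

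With the truncation at $\beta_\capc/2\asymp\capc^{1/2}$ you propose, Lemma \ref{slitmapmoments} gives conditional variance of order $\capc^{(3-\eta)/2}\parsig^{\eta-1}$ per step, so the quadratic variation over $N\asymp\capc^{-1}$ steps is $\asymp\capc^{(1-\eta)/2}\parsig^{\eta-1}$. For a maximal inequality to yield $\sup_n|M_n|\le\rho$ you need $\rho^2\gg\capc^{(1-\eta)/2}\parsig^{\eta-1}$; combined with the requirement $\rho\le A^{-1}\capc^{1/2}$ needed to invoke Lemma \ref{phinbounds}(i), this forces $\parsig\gg\capc^{(\eta+1)/(2(\eta-1))}$, a \emph{lower} bound on $\parsig$ that is incompatible with $\parsig<\capc^\gamma$ for the $\gamma$ in the statement. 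So the bootstrap cannot close as written when $1<\eta<3$. (For $\eta\ge3$ the variance per step is $\asymp\parsig^2$ regardless of truncation level, and your scheme does recover the stated threshold.)

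The paper repairs this by using a \emph{$k$-dependent} threshold $\parsig k^\lambda(\log\capc^{-1})^{6\lambda}$ for $|\theta_k|$ together with a much smaller, also $k$-dependent, truncation level $k^\lambda\parsig(\log\capc^{-1})^{2\lambda}$ in the martingale. This collapses the quadratic variation to $\parsig^2 n^{2\lambda}$ up to logarithms, which now matches the square of the threshold. The cost is a third ``jump'' term $\sum_k u_k\mathbf{1}_{\{|u_k|>k^\lambda\parsig(\log\capc^{-1})^{2\lambda}\}}$ in the decomposition, controlled via the near-tip tail bound $\mathbb{P}(|u_k|>k^\lambda\parsig(\log\capc^{-1})^{2\lambda})\lesssim k^{-\lambda(\eta-1)}(\log\capc^{-1})^{-2\lambda(\eta-1)}$; summability of this over $k\le N$ is exactly what pins down $\lambda=1/(\eta-1)$ for $1<\eta<3$. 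In short, the exponent $\lambda$ does not arise from the crudeness of Lemma \ref{phinbounds}(ii) as you suggest, but from balancing the truncated variance against the probability of exceeding the truncation.
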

\begin{proof}
Fix $\eta>1$ and let
\begin{equation}
N_T=\inf\left\{k\geq 1\colon |\theta_k|> \parsig k^{\lambda} (\log \capc^{-1})^{6 \lambda} \right\}\wedge N.
\end{equation} 
Observe that, since $\parsig < \sigma_0$, we have
\[
\parsig n^{\lambda} (\log \capc^{-1})^{6 \lambda} \leq \left ( T^{\lambda} \capc^{\gamma - (\lambda+1/2)} (\log \capc^{-1})^{6 \lambda} \right ) \capc^{1/2}.
\]
Hence, using the fact that $\gamma>\lambda+1/2$, and that $A^{-1} \capc^{1/2} \leq \beta_\capc \leq A \capc^{1/2}$, there exists some $c_0 > 0$, dependent only on $T$ and $\eta$, such that if $\capc < c_0$, then $ \{ N_T=N \} \subseteq \Omega_N$. From now on assume that $\capc < c_0$.
We shall  prove that $\mathbb{P}(N_T = N) \to 1$ as $\capc \to 0$. Once this has been done, it follows that if $\eta>1$,
\[
\lim_{\capc \to 0} \inf_{0<\parsig<\sigma_0} \mathbb{P}(\Omega_N) = 1.
\]
Exactly the same argument as Theorem \ref{thm: auxslitconv} can then be used to show that 
\[
\lim_{\capc \to 0} \sup_{\parsig>0} \mathbb{P}(\Omega_N) =0 
\]
if $\eta < 1$, and that when $\eta > 1$ and $\parsig<\sigma_0$, for any $r>1$ and $T<\infty$,
\[
\sup_{t \leq T} \sup_{|z|>r}|\Phi_{n(t)}(z) - f_t(z) | \to 0 \quad \mbox{ in probability as }\quad \capc\to 0,
\]
and hence the cluster $K_{n(t)}$ converges in the Hausdorff topology to a disk with slit of logarithmic capacity $t$ attached at $1$.

We turn to the proof. Suppose that $n < N_T$. As before, using the fact that $\parsig < \sigma_0$, we have
\[
\epsilon_n \leq \parsig n^{\lambda} (\log \capc^{-1})^{6 \lambda} \leq \left ( T^{\lambda} \capc^{\gamma - (\lambda+1/2)} (\log \capc^{-1})^{6 \lambda} \right ) \capc^{1/2},
\]
where $\epsilon_n=(e^{\parsig}-1)\vee \sup_{k\leq n}|\theta_k|$ as in Lemma \ref{phinbounds}.
Hence there exists some $0< c_1 < c_0$, dependent only on $T$ and $\eta$, such that if $\capc < c_1$, then $\epsilon_n$ satisfies the conditions of Lemma \ref{phinbounds}. From now on assume that $\capc < c_1$. Then, by Lemma \ref{phinbounds}, there exists $A_n$ such that, if $|\theta-\theta_n| \leq \capc^{1/2}$
\[
(1-A_n)|f_{n\capc}'(e^{\parsig + i(\theta-\theta_n)})|^{-\eta}<|\Phi'_n(e^{\parsig + i\theta})|^{-\eta}<(1+A_n)|f_{n\capc}'(e^{\parsig + i(\theta-\theta_n)})|^{-\eta},
\]
and furthermore $A_n = A_\eta \parsig^2 \capc^{-1} n^{2\lambda} (\log \capc^{-1})^{12 \lambda}$ for $A_{\eta}$ that depends only on $\eta$ and $T$.

We begin by getting estimates on 
\[
Z_n = \int_{\mathbb{T}} |\Phi'_n(e^{\parsig + i\theta})|^{-\eta} \dx \theta. 
\]
We have
\begin{align*}
\int_{\mathbb{T}} |\Phi'_n(e^{\parsig + i\theta})|^{-\eta} \mathbf{1}_{\{\capc^{1/2} < |\theta-\theta_n| < \pi \} }  \dx \theta
&\leq 2 B^\eta  n^{\lambda \eta} (\log \capc^{-1})^{6 \lambda \eta} \int_{\capc^{1/2}}^\pi (1-\cos u)^{-\eta/2} \dx u \\
&\leq B' n^{\lambda \eta} \capc^{-(\eta - 1)/2} (\log \capc^{-1})^{6 \lambda \eta}
\end{align*}
for some $B'$ that depends only on $\eta$ and $T$.
Using the notation of Section \ref{overview}, recall from Lemma \ref{slitmapvariance} that there exist $A', A''$ depending only on $\eta$ and $T$ such that
\[
A' (n \capc)^{\eta/2} \parsig^{-(\eta-1)} \leq Z^*_{n \capc} \leq A'' (n \capc)^{\eta/2} \parsig^{-(\eta-1)}.
\]
Hence, 
\[
(Z_{n \capc}^*)^{-1} \int_{\mathbb{T}} |\Phi'_n(e^{\parsig + i\theta})|^{-\eta} \mathbf{1}_{\{\capc^{1/2} < |\theta-\theta_n| < \pi \} }  \dx \theta \leq B_\eta \parsig^{\eta-1} n^{(\lambda -1/2) \eta} \capc^{-(2 \eta - 1)/2} (\log \capc^{-1})^{6 \lambda \eta}
\]
for some $B_\eta$ that depends only on $\eta$ and $T$. Set $B_n=B_\eta \parsig^{\eta-1} n^{(\lambda -1/2) \eta} \capc^{-(2 \eta - 1)/2} (\log \capc^{-1})^{6 \lambda \eta}$. 

Observe that the choice of $\gamma$ ensures that, provided $\parsig < \sigma_0$, we have $N^{(1-\lambda) \vee 0}A_N\to 0$ and $NB_N \to 0$. We shall see that these conditions are sufficient to prove our result.

Now
\begin{align*}
Z_n &= \int_{\mathbb{T}} |\Phi'_n(e^{\parsig + i\theta})|^{-\eta} \left ( \mathbf{1}_{\{|\theta-\theta_n|\leq\capc^{1/2}\}} + \mathbf{1}_{\{\capc^{1/2} < |\theta-\theta_n| < \pi \} } \right ) \dx \theta \\
&\leq 2 (1+A_n) \int_{0}^{\capc^{1/2}} |f_{n\capc}'(e^{\parsig + i \theta})|^{-\eta} \dx \theta + B_n Z_{n \capc}^* \\
&\leq (1+A_n + B_n) Z_{n\capc}^*.
\end{align*}
Similarly, we can show that
\[
Z_n \geq (1-A_n-B_n) Z_{n\capc}^*. 
\]
Since $A_n+B_n \to 0$ as $\capc \to 0$ there exists $0<c_2 \leq c_1$, depending only on $T$ and $\eta$, such that $A_n + B_n < 1/2$ provided $\capc < c_2$. Assume from now on that $\capc < c_2$.
Hence, if $|\theta-\theta_n| < \capc^{1/2}$ then,  
\[
(1-\alpha_n) h_{n+1}^*(\theta|\theta_{n}) < h_{n+1}(\theta) < (1+\alpha_n) h_{n+1}^*(\theta|\theta_n)
\]
where $\alpha_n = 7(A_n + B_n)$. Equivalently
\[
(1-\alpha_n) h_{n+1}^*(\theta|0) < h_{n+1}(\theta+\theta_n) < (1+\alpha_n) h_{n+1}^*(\theta|0).
\]

As in the proof of Theorem \ref{thm: auxslitconv}, we choose to sample $\theta_k$ from the interval $[\theta_{k-1}-\pi, \theta_{k-1}+\pi)$ and so we can write $\theta_n = u_2+\cdots+u_n$ where the $u_k$ are $[-\pi,\pi)$-valued random variables and, conditional on $\mathcal{F}_{k-1}$, $u_k=\theta_k-\theta_{k-1}$ has distribution function $h_k(\theta+\theta_{k-1})$. We write
\begin{align}
\theta_n = M_n + \sum_{k=1}^n \mathbb{E} \left ( \left . u_k \mathbf{1}_{\{|u_k| \leq k^{\lambda} \parsig (\log \capc^{-1})^{2 \lambda} \}} \right | \mathcal{F}_{k-1} \right ) + \sum_{k=1}^n u_k \mathbf{1}_{\{|u_k| > k^{\lambda} \parsig (\log \capc^{-1})^{2 \lambda} \}},
\label{thetadecompo}
\end{align}
where
\[
M_n = \sum_{k=1}^n \left ( u_k \mathbf{1}_{\{|u_k| \leq k^{\lambda} \parsig (\log \capc^{-1})^{2 \lambda} \}} - \mathbb{E} \left ( \left . u_k \mathbf{1}_{\{|u_k| \leq k^{\lambda} \parsig  (\log \capc^{-1})^{2 \lambda} \}} \right | \mathcal{F}_{k-1} \right ) \right )
\]
is a martingale.

We first show  $M_n$ is small with high probability. By Lemma \ref{slitmapmomentbounds},
\begin{align*}
\mathbb{E} \left ( \left . |u_k|^2 \mathbf{1}_{\{|u_k| \leq k^{\lambda} \parsig (\log \capc^{-1})^{2 \lambda} \}} \right | \mathcal{F}_{k-1} \right ) 
&\leq (1+\alpha_{k-1})\int_{|\theta| \leq k^{\lambda} \parsig (\log \capc^{-1})^{2 \lambda}} |\theta|^2 h_k^*(\theta|0) \dx \theta \\
&\leq 
\begin{cases} 
A \parsig^2 k^{(3-\eta) \lambda} (\log \capc^{-1})^{2\lambda(3-\eta)} &\mbox{ if } 1<\eta<3 \\
A \parsig^2 (\log \capc^{-1})^2 &\mbox{ if } \eta \geq 3, \\
\end{cases} 
\end{align*}
for some constant $A$ depending only on $T$ and $\eta$. Hence $M_n$ is a martingale with quadratic variation
\[
\langle M_{n \wedge N_T} \rangle \leq A n^{2 \lambda} \parsig^2 (\log \capc^{-1})^{4 \lambda}.
\]
By Freedman's version of Bernstein's inequality, see \cite[Proposition 1]{F75},  we obtain that
\[
\mathbb{P}\left ( |M_n| >  \parsig n^{\lambda} (\log \capc^{-1})^{6 \lambda} / 2 \mbox{ for some } n\leq N_T \right ) \leq 2 \exp \left ( - \frac{(\log c^{-1})^{4 \lambda}}{8(A+1)} \right ) \to 0 \quad \textrm{as}\quad \capc\to 0
\]
as desired.

We next turn to the second term in \eqref{thetadecompo}. We use that
\begin{align*}
&\mathbb{E} \left ( \left . u_k \mathbf{1}_{\{|u_k| \leq k^{\lambda} \parsig (\log \capc^{-1})^{2 \lambda} \}} \right | \mathcal{F}_{k-1} \right ) \\
= &\int_{|\theta| \leq k^{\lambda} \parsig (\log \capc^{-1})^{2 \lambda}} \theta h_{k}(\theta + \theta_{k-1}) \dx \theta \\
= &\int_{|\theta| \leq k^{\lambda} \parsig (\log \capc^{-1})^{2 \lambda}} \theta h^*_{k}(\theta|0) \dx \theta + \int_{|\theta| \leq k^{\lambda} \parsig (\log \capc^{-1})^{2 \lambda}} \theta (h_{k}(\theta + \theta_{k-1})-h_k^*(\theta|0) ) \dx \theta \\
= &\int_{|\theta| \leq k^{\lambda} \parsig (\log \capc^{-1})^{2 \lambda}} \theta (h_{k}(\theta + \theta_{k-1})-h_k^*(\theta|0) ) \dx \theta,
\end{align*}
by the symmetry of $h^*_k(\theta|0)$. Hence, again by Lemma \ref{slitmapmomentbounds},
\begin{align*}
\left | \mathbb{E} \left ( \left . u_k \mathbf{1}_{\{|u_k| \leq k^{\lambda} \parsig  (\log \capc^{-1})^{2 \lambda} \}} \right | \mathcal{F}_{k-1} \right ) \right |
&\leq \int_{|\theta| \leq k^{\lambda} \parsig (\log \capc^{-1})^{2 \lambda}} |\theta| |h_{k}(\theta + \theta_{k-1})-h_k^*(\theta|0) | \dx \theta \\
&\leq \alpha_{k-1} \int_{|\theta| \leq k^{\lambda} \parsig (\log \capc^{-1})^{2 \lambda}} |\theta| h_k^*(\theta|0) \dx \theta \\
&\leq 
\begin{cases} 
A \alpha_{k-1} \parsig k^{(2-\eta) \lambda} (\log \capc^{-1})^{2 \lambda(2-\eta)} &\mbox{ if } 1<\eta<2 \\
A \alpha_{k-1} \parsig (\log \capc^{-1})^2 &\mbox{ if } \eta \geq 2, \\
\end{cases} 
\end{align*}
for some constant $A$ depending only on $T$ and $\eta$. Therefore, if $1<\eta<2$,
\begin{align*}
\left | \sum_{k=1}^n \mathbb{E} \left ( \left . u_k \mathbf{1}_{\{|u_k| \leq k^{\lambda} \parsig (\log \capc^{-1})^{2 \lambda} \}} \right | \mathcal{F}_{k-1} \right ) \right | 
\leq &A \parsig (\log \capc^{-1})^{2\lambda(2-\eta)} \sum_{k=1}^n \alpha_{k-1} k^{(2-\eta) \lambda} \\
\leq & \parsig n^{\lambda} (\log \capc^{-1})^{6 \lambda} \left ( A n^{-(\eta-1)\lambda +1} \alpha_n (\log \capc^{-1})^{-2 \lambda(1+\eta)} \right ),
\end{align*}
and if $\eta \geq 2$,
\begin{align*}
\left | \sum_{k=1}^n \mathbb{E} \left ( \left . u_k \mathbf{1}_{\{|u_k| \leq k^{\lambda} \parsig (\log \capc^{-1})^{2 \lambda} \}} \right | \mathcal{F}_{k-1} \right ) \right | 
\leq &A \parsig (\log \capc^{-1})^2 \sum_{k=1}^n \alpha_{k-1}  \\
\leq & \parsig n^{\lambda} (\log \capc^{-1})^{6 \lambda} \left ( A n^{1-\lambda} \alpha_n (\log \capc^{-1})^{-2(3\lambda-1)} \right ).
\end{align*}

By our choice of $\gamma$, there exists $0<c_3 \leq c_2$, depending only on $T$ and $\eta$, such that 
\[
\left | \sum_{k=1}^n \mathbb{E} \left ( \left . u_k \mathbf{1}_{\{|u_k| \leq k^{\lambda} \parsig (\log \capc^{-1})^{2 \lambda} \}} \right | \mathcal{F}_{k-1} \right ) \right | < \parsig n^{\lambda} (\log \capc^{-1})^{6 \lambda}/2
\]
provided $\capc < c_3$. From now on assume that $\capc < c_3$.

Finally, we deal with the last term in \eqref{thetadecompo}. The same computation as used to bound $Z_n$ can be used to show that
\[
\mathbb{P}(|u_k| \geq \capc^{1/2}; \ k \leq N_T) \leq B_k.
\]
We also have
\begin{align*}
\mathbb{P}(k^{\lambda} \parsig (\log \capc^{-1})^{2 \lambda} <|u_k| \leq \capc^{1/2}) &\leq A(1+\alpha_{k-1}) \int_{k^{\lambda} \parsig (\log \capc^{-1})^{2 \lambda}}^{\capc^{1/2}} \frac{1}{\parsig}\left(1+\left(\frac{\theta}{\parsig}\right)^2\right)^{-\eta/2} \dx \theta \\
&\leq A \int_{k^{\lambda} (\log \capc^{-1})^{2 \lambda}}^{\infty} \left(1+\theta^2\right)^{-\eta/2} \dx \theta \\
&\leq A k^{-\lambda(\eta-1)} ( \log \capc^{-1})^{-2 \lambda (\eta-1)}.
\end{align*}
Hence, putting these two bounds together,
\begin{align*}
&\mathbb{P} \left ( \sum_{k=1}^n u_k \mathbf{1}_{\{|u_k| > k^{\lambda} \parsig (\log \capc^{-1})^{2 \lambda} \}} \neq 0 \mbox{ for some } n \leq N_T \right ) \\
\leq &\mathbb{P}(|u_k| > k^{\lambda} \parsig (\log \capc^{-1})^{2 \lambda} \mbox{ for some } k \leq N_T) \\
\leq &A \sum_{k=1}^{N}  \left ( k^{-\lambda(\eta-1)} ( \log \capc^{-1})^{-2 \lambda (\eta-1)} + B_k \right ) \\
\leq &A \left ( ( \log \capc^{-1})^{-1}  + NB_N \right ) \to 0
\end{align*}
since $\parsig < \sigma_0$.

But on the high probability event
\begin{multline*}
\left \{ |M_n| <  \parsig n^{\lambda} (\log \capc^{-1})^{6 \lambda} / 2  \mbox{ for all } n \leq N_T \right \} \\
\cap \left\{\left | \sum_{k=1}^n \mathbb{E} \left ( \left . u_k \mathbf{1}_{\{|u_k| \leq k^{\lambda} \parsig (\log \capc^{-1})^{2 \lambda} \}} \right | \mathcal{F}_{k-1} \right ) \right | < \parsig n^{\lambda} (\log \capc^{-1})^{6 \lambda}/2\right\}\\ \cap
\left \{ \sum_{k=1}^n u_k \mathbf{1}_{\{|u_k| > k^{\lambda} \parsig (\log \capc^{-1})^{2 \lambda} \}} = 0 \mbox{ for all } n \leq N_T \right \}
\end{multline*}
we have
\[
\sup_{n \leq N_T} |\theta_n| < \parsig n^{\lambda} (\log \capc^{-1})^{6 \lambda}
\]
and hence $N_T=N$. 
\end{proof}

\subsection{Modifications of the model}

One criticism that can be levelled at the ALE$(0,\eta)$ model, from the point of view of modelling physical phenomena, is that the conformal mappings distort the sizes of particles as they are added to the growing cluster. Using the result proved above that the scaling limit of the ALE$(0,\eta)$ cluster is a growing slit, it can be shown that the size of the $n$th particle is approximately equal to
$d(\capc n) - d(\capc(n-1))$. Using the expression for $d(t)$ in \eqref{sticklength}, we obtain
\[
d(\capc n) - d(\capc(n-1)) \asymp 
\begin{cases}
\frac{2\capc^{1/2}}{n^{1/2}+(n-1)^{1/2}} &\quad \mbox{if} \quad \capc n \ll 1; \\
2 \capc e^{\capc n} &\quad \mbox{if} \quad \capc n \gg 1.
\end{cases}
\]  
In particular, the first particle is of size approximately $2 \capc^{1/2}$, whereas all subsequent particles are strictly smaller.

A number of modifications to the model are possible which result in clusters where all of the particles are roughly the same size. The simplest modification (cf. \cite{JST15}) is to recursively choose a deterministic sequence of capacities with $c_1=\capc$ and $c_n$ satisfying
\[
d(C_n) - d(C_{n-1}) = d(\capc)\quad
\textrm{where}\quad
C_n = \sum_{j=1}^n c_j.
\]
Another modification (see \cite{Has01, MatJen02}) is to take the logarithmic capacity of the $n$th particle to be 
\[
c_n = \capc |\Phi_{n-1}'(e^{\tilde{\parsig}+i\theta_n})|^{-2}
\]
for some regularization parameter $\tilde{\parsig}>0$, not necessarily equal to the angular regularization parameter $\parsig$. Closely related (see \cite{CM01, RZ05}), is to choose logarithmic capacity $c_n$ corresponding to slit length
\[
d_n = \inf\{d > 0: d |\Phi_{n-1}'((1+d)e^{i \theta_n})| = d(\capc) \}.
\]

In each of these modified models, the total capacity of the cluster no longer grows linearly in the number of particles and is potentially random. It is therefore necessary to modify the timescale in which to obtain scaling results. More precisely, given some fixed $T>0$, let
\[
n(t)= \sup \{ n: C_n < t \} \quad \mbox{for} \quad t \leq T,
\] 
and set $N=n(T)$. The event $\Omega_N$ can then be defined as before.

It is relatively straightforward to verify that the proof and conclusion of Theorem \ref{slitstheorem} still hold for these modified models (and further generalisations). We only state the modified result for $\eta>1$, as the case $\eta<1$ is identical to that for the Markov model, for any choice of logarithmic capacity sequence.

\begin{corollary}
For $\eta>1$ and $\capc>0$, define $\sigma_0$ as in Theorem \ref{slitstheorem} and take $\parsig<\sigma_0$. Consider a sequence of conformal mappings, constructed as in \eqref{aggproc} from sequences $\{\theta_k \}_{k=1}^{\infty}$ and $\{c_k\}_{k=1}^\infty$, where (without loss of generality) $\theta_1=0$ and, conditional on $\mathcal{F}_{n-1} = \sigma(\theta_k, c_k: 1 \leq k \leq n-1)$,  $\theta_{n}$ are given by \eqref{aleangles}. 

Provided there exists some constant $A>0$, depending only on $T$ and $\eta$, such that
\[
\mathbb{P}(c_k \geq A \capc \mbox{ for all } k=1, \dots N) \to 1
\]
as $\capc \to 0$, it holds that $\mathbb{P}(\Omega_N) \to 1$ as $\capc \to 0$. Furthermore, such a constant $A$ exists for the three modifications defined above as well as for $\mathrm{ALE}(\alpha,\eta)$ for any $\alpha>0$.

In this case, for any $r>1$ and $T<\infty$,
\[
\sup_{t \leq T} \sup_{|z|>r}|\Psi_t(z) - f_t(z) | \to 0 \quad \textrm{ in probability as }\quad \capc\to 0,
\]
where $\Psi_t$ is the solution to \eqref{loewnerPDEdrivingfunct} corresponding to the modified model,
and hence the cluster $K_{t}$ converges in the Hausdorff topology to a disk with slit of logarithmic capacity $t$ attached at position $1$.
\end{corollary}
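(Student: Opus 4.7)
The plan is to extend the proof of Theorem \ref{slitstheorem} by replacing the deterministic time $n\capc$ with the cumulative capacity $C_n = \sum_{k=1}^{n} c_k$, and then verifying the hypothesis on $\{c_k\}$ for each of the listed variants. The argument splits naturally into two parts.

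For the first part, I would work on the good event $\mathcal{G}_N = \{c_k \geq A\capc \text{ for all } k \leq N\}$ and adapt Lemma \ref{phinbounds} via the chain-rule identity
\[
\frac{\Phi'_n(e^{\parsig + i \theta})}{(f^{\theta_{n}}_{C_n})'(e^{\parsig + i \theta})} = \frac{\Phi_{n-1}'(f^{\theta_n}_{c_n}(e^{\parsig + i \theta}))}{(f^{\theta_n}_{C_{n-1}})'(f^{\theta_n}_{c_n}(e^{\parsig + i \theta}))}.
\]
On $\mathcal{G}_N$, Lemma \ref{slitmapmove} still applies to give $|w|-1 \geq A' (A\capc)^{1/2}$ for $w = f^{\theta_n}_{c_n}(e^{\parsig+i\theta})$, so Proposition \ref{tipderivthm} yields the same $\epsilon_n^2/\capc$ ratio bound as before, up to constants depending on $A$. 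The slit map estimates in Section \ref{slitder} depend only on the capacity parameter, so with $C_n$ in place of $n\capc$ and the comparison $An\capc \leq C_n \leq T$, all moment bounds and normalization estimates on $Z_n$ pass through verbatim. The definition $N=n(T)$ ensures $N \leq T/(A\capc)$ on $\mathcal{G}_N$, so the martingale computations and union bounds in Theorem \ref{slitstheorem} go through with constants adjusted by a factor depending only on $A$.

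For the second part, the deterministic modification satisfies $c_n \asymp \capc$ uniformly by direct computation from $d(t) \asymp t^{1/2}$ via \eqref{basepointasymps}. For the remaining models, I would run a joint stopping-time argument with $\tau = \inf\{k : c_k < A\capc \text{ or } |\theta_k - \theta_{k-1}| \geq \beta_\capc\} \wedge N$. On $\{\tau \geq n\}$, the tip event through step $n-1$ together with Lemma \ref{phinbounds}(i) and Lemma \ref{slitmapderivativebds} yields an upper bound
\[
|\Phi'_{n-1}(e^{\parsig + i\theta_n})| \leq B (\parsig + |\theta_n - \theta_{n-1}|)\, d(C_{n-1})^{-1},
\]
which combined with $C_{n-1} \leq T$ gives $|\Phi'_{n-1}(e^{\parsig+i\theta_n})|$ bounded above by a $T$-dependent constant. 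Raising to the power $-\alpha$ and multiplying by $\capc$ shows $c_n \geq A\capc$ for a suitable $A=A(\alpha,T,\eta)$ in $\mathrm{ALE}(\alpha,\eta)$; analogous bounds, using \eqref{capcvssize} to pass between $d_n$ and $c_n$, handle the two derivative-dependent slit-size modifications.

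The main obstacle is avoiding circularity in the bootstrap: the event $\mathcal{G}_N$ is defined through the $c_k$, which themselves depend on the ancestry. Running the argument with the joint stopping time $\tau$ resolves this, since the capacity bound and the tip event are controlled simultaneously, and each implies that the other persists at the next step. Once $\mathbb{P}(\tau = N) \to 1$ (hence $\mathbb{P}(\Omega_N) \to 1$) is established, the uniform convergence of $\Phi_{n(t)}$ to $f_t$ and the Hausdorff convergence of $K_{n(t)}$ follow from Proposition \ref{loewnercontprop} exactly as at the end of the proof of Theorem \ref{slitstheorem}.
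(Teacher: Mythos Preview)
Your overall strategy matches the paper's exactly: replace $n\capc$ by $C_n$, compare $\Phi_n$ to $f^{\theta_n}_{C_n}$, and run a joint stopping-time argument that simultaneously controls the angles and the capacities so as to break the circularity. The paper does precisely this, augmenting the stopping time $N_T$ from Theorem~\ref{slitstheorem} with the extra clause ``or $c_k<A\capc$'' and then checking that the derivative bound $|\Phi'_{n-1}(e^{\parsig+i\theta_n})|\le A'$ follows on $\{n\le N_T\}$ from Lemma~\ref{phinbounds}(i) together with the elementary fact that $|f'_t(z)|$ is bounded for $|\arg z|\le\beta_t/2$, $t\le T$.

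There is, however, a genuine gap in your choice of joint stopping time. You take $\tau=\inf\{k:c_k<A\capc\text{ or }|\theta_k-\theta_{k-1}|\ge\beta_\capc\}\wedge N$, i.e.\ you couple the capacity condition with the \emph{tip event}. But Lemma~\ref{phinbounds}(i), which you invoke to get the near-tip derivative comparison, requires $\epsilon_{n-1}=\sup_{k\le n-1}|\theta_k|<A^{-1}\capc^{1/2}$. The tip event only gives $|\theta_k|\le(k-1)\beta_\capc\asymp k\capc^{1/2}$, which is far too large once $k$ is of order $\capc^{-1}$. Consequently the ratio bound \eqref{rateq} is not available on $\{\tau\ge n\}$, and your displayed estimate $|\Phi'_{n-1}(e^{\parsig+i\theta_n})|\le B(\parsig+|\theta_n-\theta_{n-1}|)d(C_{n-1})^{-1}$ is not justified. (Separately, the phrase ``combined with $C_{n-1}\le T$'' goes the wrong way: an upper bound on $C_{n-1}$ gives an upper bound on $d(C_{n-1})$, hence a \emph{lower} bound on the ratio.)

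The fix is straightforward and is what the paper does: define the joint stopping time using the stronger angle control already present in the proof of Theorem~\ref{slitstheorem}, namely
\[
N_T=\inf\bigl\{k\ge1:\ |\theta_k|>\parsig k^{\lambda}(\log\capc^{-1})^{6\lambda}\ \text{or}\ c_k<A\capc\bigr\}\wedge N.
\]
On $\{n\le N_T\}$ one then has $\epsilon_{n-1}\le\parsig n^{\lambda}(\log\capc^{-1})^{6\lambda}\ll\capc^{1/2}$ by the choice of $\gamma$, so Lemma~\ref{phinbounds}(i) applies; and since $|\theta_n-\theta_{n-1}|\le 2\parsig n^{\lambda}(\log\capc^{-1})^{6\lambda}\ll\capc^{1/2}\le\beta_{C_{n-1}}/2$ while $|z|-1\asymp\parsig\le d(C_{n-1})$, Lemma~\ref{slitmapderivativebds} gives $|f'_{C_{n-1}}(e^{\parsig+i(\theta_n-\theta_{n-1})})|\lesssim e^T$, whence $c_n\ge A\capc$ for a suitable $A$. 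With this correction the bootstrap closes and the rest of your outline goes through.
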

Note that, as we do not impose an upper bound on each logarithmic capacity $c_k$, it is no longer necessarily the case that $n(t)\to t$ as $\capc \to 0$. For his reason, we need to compare $f_t$ with $\Psi_t$, rather than $\Phi_{n(t)}$ as in the previous result.
\begin{proof}
The proof consists of checking step by step that each inequality in the proofs of Lemma \ref{phinbounds} and Theorem \ref{slitstheorem} still holds (possibly with new constants).
 The only changes are that we compare $\Phi_n$ to 
 \[f_{C_n}^{\theta_n}=f_{c_1}^{\theta_n}\circ\cdots \circ f_{c_n}^{\theta_n}\] 
 instead of $f_{\capc n}^{\theta_n}$ and we need to define 
\[
N_T=\inf\left\{k\geq 1\colon |\theta_k|> \parsig k^{\lambda} (\log \capc^{-1})^{6 \lambda} \mbox{ or } c_k < A \capc \right\}\wedge N
\]
and then use the additional assumption in the statement of the corollary to show that $N_T=N$ with high probability.

To show that the additional assumption holds for the modified models defined above, it is enough to show that, so long as $n \leq N_T$, there exists some constant $A$ (depending only on $T$ and $\eta$), such that
\[
|\Phi_{n-1}'(e^{\tilde{\parsig}+i\theta_n})|^{-1} > A.
\]  
But this follows by using the (analogous) estimates in Lemma \ref{phinbounds} for the modified model and observing that there exists some constant $A'$ (depending only on $T$) such that
\[
|f_t'(z)| < A'.
\]  
whenever $|\arg(z)| \leq \beta_t/2$ and $t \leq T$.
\end{proof}

\section{Estimates on conformal maps via Loewner's equation}\label{conformal}
We now obtain refined estimates on the distance between solutions to the Loewner equation in terms of the distance between their driving functions, in the special case when the driving functions are close to constant. These will enable us to prove Lemma 8. Generic estimates between conformal maps tend to blow up close to the boundary (as seen in, for example, Proposition \ref{loewnercontprop}).
As we wish to compare $|\Phi_n'(e^{\parsig+i\theta})|$ to $|(f_{n\capc}^{\theta_n})'(e^{\parsig+i\theta})|$ when $\parsig$ is typically much smaller than the difference between the respective driving functions, we need bespoke estimates which behave well close to the boundary.

Suppose $\Psi^j_t(z)$ is the solution to the Loewner equation \eqref{loewnerPDEdrivingfunct} with driving function $\xi^j$, for $j=0,1$. For fixed $T>0$, let $u_t^j(z)$ be the corresponding reverse-time Loewner flows defined in \eqref{loewnerODE}, so that $\Psi^j_T(z) = u^j_T(z)$ and $(\Psi^j_T)'(z) = (u^j_T)'(z)$. In Section \ref{revanal}, we compare $\Psi^1_T(z)$ to $\Psi^0_T(z_0)$ under the assumption that $\Psi^0_T(z_0)$ (or, more precisely, $u^0_t(z_0)$, for $0 \le t \leq T$) is ``known''. Specifically, we find conditions on $\| \xi^1-\xi^0 \|_T = \sup_{t \leq T}  |e^{i\xi^1_t}-e^{i\xi^0_t}|$ and $|z-z_0|$, which depend on $u^0_t(z_0)$ and $(u^0_t)'(z_0)$, under which $|u^1_t(z)-u^0_t(z_0)|$ can be shown to be small. 

In Section \ref{slitcomp} we interpret this result when $\xi^0 \equiv 0$. This enables us to compare $\Psi'_T(z)$ to $f'_T(z)$ when $\xi$, the driving function of $\Psi$, is close to zero. Specifically, we obtain refined estimates in the case when $\arg z$ is close to 0 and in the case when $|z|$ is close to 1. We also obtain cruder estimates which apply in the intermediate regime between these two cases which are used in the proof of Lemma \ref{phinbounds} to ``glue'' the two results together.

\subsection{Analysis of the reverse-time Loewner flow}
\label{revanal}

Define $h: \Delta \times \TT \to \CC$ by
\[
h(u,v) = u \frac{u v +1}{u v - 1}
\]
so, by \eqref{loewnerODE},
\[
\partial_t u_t^j(z) = h(u_t^j(z), e^{-i\xi^j_{T-t}}), \quad j=0,1.
\]
Observe that
\begin{align}
\label{hparder}
\frac{\partial h}{\partial u}(u,v) &= 1 - \frac{2}{(u v - 1)^2}, \nonumber \\
\frac{\partial h}{\partial v}(u,v) &= -\frac{ 2u^2}{(u v - 1)^2}.
\end{align}
Since
\[
\partial_t (u_t^j)'(z) = \frac{\partial h}{\partial u}(u_t^j(z), e^{-i\xi^j_{T-t}}) (u_t^j)'(z),
\]
using $(u_0^j)'(z)=1$, we therefore obtain
\begin{equation}
\label{uderdef}
(u_t^j)'(z) = \exp \left ( t - \int_0^t \frac{2 \dx s}{(u^j_s(z) e^{-i \xi^j_{T-s}} - 1)^2}\right ).
\end{equation}
It is also convenient to write $u_t^{j}(z)=r_t^{j}(z)e^{i \vartheta^{j}_t(z)}$ where $r_t^{j}(z) \geq 1$ and $\vartheta^{j}_t(z) \in \RR$ with $\vartheta^{j}_0(z) \in (-\pi, \pi]$. Substituting this into \eqref{loewnerODE} and separating $\mathrm{Re}[(u^{j}_t(z)e^{-i\xi^j_{T-t}}+1)/(u^{j}_t(z)e^{-i\xi^j_{T-t}}-1)]$ and $\mathrm{Im}[(u^{j}_t(z)e^{-i\xi^j_{T-t}}+1)/(u^{j}_t(z)e^{-i\xi^j_{T-t}}-1)]$
we obtain the two differential equations
\begin{equation}
\partial_tr^{j}_t=r^{j}_t\frac{(r^{j}_t)^2-1}{(r^{j}_t)^2-2r^{j}_t\cos (\vartheta^{j}_t-\xi^j_{T-t})+1}
\label{radiusreverse}
\end{equation}
and
\begin{equation}
\partial_t\vartheta^{j}_t=-2\frac{r^{j}_t\sin (\vartheta^{j}_t-\xi^j_{T-t})}{(r^{j}_t)^2-2r^{j}_t\cos (\vartheta^{j}_t-\xi^j_{T-t})+1}
\label{anglereverse}
\end{equation}
(where we have suppressed the dependence on $z$ to ease notation).

We observe that the right hand side of \eqref{radiusreverse} is non-negative and maximised when $\vartheta^{j}_t-\xi^j_{T-t}=0$. 
In this case, the differential equation
\[\partial_tr^{j}_t=r^{j}_t\frac{r^{j}_t+1}{r^{j}_t-1}\]
can be solved explicitly, 
\[r^{j}_t(z)=\frac{1}{2|z|}\left(e^t |z|^2 + 2 e^t |z| +e^t- e^{t/2}(|z| + 1)\sqrt{e^t (|z| + 1)^2- 4|z|} - 2 |z|\right).\]
Noting that
\[r_t^{j}(z)\leq e^t\frac{(|z|+1)^2}{|z|}\leq 4e^t|z|, \quad |z|>1,\]
we obtain the crude estimate
\begin{equation}
\label{crudeu}
|z| \leq |u_t^j(z)| \leq 4|z| e^t.
\end{equation}

\begin{lemma}
\label{ucomp} 
Suppose $z_0 \in {\Delta}$, $T>0$ and $\xi^0:(0,T] \to \RR$ are given and let
\[
\Lambda_t = \int_0^t \frac{2 |u_s^0(z_0)|^2 \dx s}{|(u_s^0)'(z_0)||u_s^0(z_0)e^{i \xi^0_{T-s}}-1|^2}.
\]
There exists some absolute constant $A$ such that, for all $|z| > 1$  satisfying 
\begin{align}
\label{zassump}
|z-z_0| &\leq A^{-1} \inf_{0 \leq t \leq T} \left ( \frac{|u_t^0(z_0)e^{-i \xi^0_{T-t}} -1|}{|(u_t^0)'(z_0)|} \wedge \left ( \int_0^t \frac{|(u_s^0)'(z_0)| }{ |u_s^0(z_0)e^{-i \xi^0_{T-s}} -1|^3} \dx s \right )^{-1} \right ), 
\end{align}
we have, for all $0 \leq t \leq T$,
\[
\left | \log \frac{u^0_t(z) - u^0_t(z_0)}{(z-z_0)(u^0_t)'(z_0)}\right | \leq A |z-z_0| \int_0^t \frac{|(u^0_s)'(z_0)|\dx s}{|u_s^0(z_0)e^{-i \xi^0_{T-s}} -1|^3}  
\]
(where we interpret the left hand side as being equal to $0$ if $z=z_0$) and
\[
\left | \log \frac{(u^0_t)'(z)}{(u^0_t)'(z_0)} \right | \leq A |z-z_0| \int_0^t \frac{|(u^0_s)'(z_0)|\dx s}{|u_s^0(z_0)e^{-i \xi^0_{T-s}} -1|^3} . 
\]
Furthermore, $A$ can be chosen so that if, in addition, $\xi^1:(0,T] \to \RR$ satisfies
\begin{align}
\label{xiassump}
\|\xi^1-\xi^0\|_T &\leq A^{-1} \inf_{0 \leq t \leq T} \left ( \frac{|u_t^0(z_0)e^{-i \xi^0_{T-t}} -1|}{|(u_t^0)'(z_0)|\Lambda_t+|u^0_t(z_0)|} \wedge \left ( \int_0^t \frac{\Lambda_s|(u_s^0)'(z_0)| + |u^0_s(z_0)|}{ |u_s^0(z_0)e^{-i \xi^0_{T-s}} -1|^3} \dx s \right )^{-1} \right ),
\end{align}
then, for all $0 \leq t \leq T$,
\begin{equation}
\left | u^1_t(z)-u^0_t(z) \right | \leq A|(u^0_t)'(z_0)| \|\xi^1 - \xi^0\|_T \Lambda_t  
\label{uLambdabound}
\end{equation}
and
\[
\left | \log \frac{(u^1_t)'(z)}{(u^0_t)'(z)} \right | \leq A \| \xi^1-\xi^0\|_T \int_0^t \frac{\Lambda_s|(u_s^0)'(z_0)| + |u^0_s(z_0)|}{ |u_s^0(z_0)e^{-i \xi^0_{T-s}} -1|^3} \dx s. 
\]
\end{lemma}
Lemma \ref{ucomp} can be interpreted as telling us that, provided $u^0_t(z_0)$ stays away from $e^{i \xi^0_{T-t}}$, $u_t^1(z)$ will be close to $u_t^0(z_0)$ for sufficiently small $|z-z_0|$ and $\|\xi^1-\xi^0\|_T$. The conditions in \eqref{zassump} and \eqref{xiassump} quantify precisely what is meant by `sufficiently small'.
\begin{remark} 
\begin{itemize}
\item[1.] At first glance, Lemma \ref{ucomp} may not appear to be very illuminating. However, the key point is that all of the bounds have been expressed purely in terms of $u^0_t(z_0)$ for $0 \leq t \leq T$, which enables us to obtain good estimates in situations where we have good control over $u^0_t(z_0)$. The benefit of this approach is demonstrated in Section \ref{slitcomp}. There, $u^0_t(z_0)$ is taken to be the solution corresponding to a constant driver and so the relevant terms may be computed explicitly to yield simple expressions. 
\item[2.] The conditions \eqref{zassump} and \eqref{xiassump} can be simplified by observing that by \eqref{radiusreverse}, for any $g: [0,T] \to [0,\infty)$, 
\[
\int_0^t \frac{g(s) \dx s}{|u_s^0(z_0)e^{-i \xi^0_{T-s}} -1|^2} \leq \sup_{0 \leq s \leq t} g(s) \int_0^t \frac{\partial_s r^0_s}{r^0_s((r^0_s)^2 - 1)} \dx s= \frac{1}{2} \sup_{0 \leq s \leq t} g(s) \log \frac{(|u^0_t(z_0)|^2-1)|z_0|^2}{|u^0_t(z_0)|^2 (|z_0|^2-1)}.
\]
Therefore 
\[
\inf_{0 \leq t \leq T} \left ( g(t)^{-1} \wedge \left ( \int_0^t \frac{g(s) }{ |u_s^0(z_0)e^{-i \xi^0_{T-s}} -1|^2} \dx s \right )^{-1} \right )
\]
can be replaced by
\[
\inf_{0 \leq t \leq T} g(t)^{-1} \left ( \frac{1}{2}\log \frac{|z_0|}{ |z_0|-1} \right )^{-1}.
\]
However, in the cases we are interested in, it is possible to eliminate the $\log$ term by computing the integral explicitly.
\item[3.] 
Although this result is most powerful when applied to specific choices of $z_0$ and $\xi^0$, it can be used to provide generic estimates too.

Observe that, by \eqref{uderdef} and the crude estimates on $|u^0_t(z_0)|$ in \eqref{crudeu},
\begin{align*}
\Lambda_t &\leq \int_0^t \exp \left ( -s + \int_0^s \frac{2 |u_s^0(z_0)| \dx r}{|u^0_r(z_0) e^{-i \xi^0_{T-r}} - 1|^2}\right ) \frac{8 |z_0|e^s |u_s^0(z_0)|}{|u^0_s(z_0) e^{-i \xi^0_{T-s}} - 1|^2} \dx s \\
&=  4|z_0|\left ( \exp \left ( \int_0^t \frac{2 |u^0_s(z_0)| \dx s}{|u^0_s(z_0) e^{-i \xi^0_{T-s}} - 1|^2} \right ) - 1 \right ) 
\end{align*}
and, by \eqref{radiusreverse},
\[
\int_0^t \frac{2 |u^0_s(z_0)| \dx t}{|u_s^0(z_0)e^{-i \xi^0_{T-s}} -1|^2} = \int_0^t \frac{2 \partial_sr^{0}_s \dx s}{(r^{0}_s)^2-1} = \log \frac{(|u^0_t(z_0)|-1)(|z_0|+1)}{(|u^0_t(z_0)|+1)(|z_0|-1)}.
\]
Hence it follows from \eqref{uLambdabound} (taking $z=z_0$) that there exists some absolute constant $A$ such that
\[
|\Psi^1_T(z) - \Psi^0_T(z)| \leq A  \|\xi^1 - \xi^0\|_T |(\Psi^0_T)'(z)| |z| \frac{(|\Psi^0_T(z)|-1)(|z|+1)}{(|\Psi^0_T(z)|+1)(|z|-1)}.
\]
By using standard distortion estimates to bound $|(\Psi^0_T)'(z)|$, there exists some (possibly different) absolute constant $A$ such that
\[
|\Psi^1_T(z) - \Psi^0_T(z)| \leq \frac{A e^T |z|\|\xi^1 - \xi^0\|_T}{(|z|-1)^2}
\]
(cf Proposition \ref{loewnercontprop}). 

Here we have used only generic information about the two flows. We note that this last estimate is not optimal, however, as we have taken worst-case bounds for both $\Lambda_T$ and $|(\Psi^0_T)'(z)|$, whereas typically these two quantities are bad in different regions. Indeed, one expects the exponent $1$ in the denominator as has been proved in the chordal setting. In fact, one can start from the setting of Proposition \ref{loewnercontprop} to obtain an exponent $1+\delta$ for $\delta > 0$ arbitrarily small (see \cite{F1}). Alternatively, one can localise and use the half-plane case (see \cite{JRW14}). By following the latter approach near the tip of a slit map, one can obtain an estimate that also exploits information about the derivative but with a sub-power correction that we do not get here. 

We emphasize that the case in which we apply this result is not the generic one. We have much information about $|(u_t^0)'(z_0)|$ and the form of the estimates here allows us to use this information efficiently.
\end{itemize}
\end{remark}

\begin{proof}
Set $\delta^j_t=u^j_t(z) - u^0_t(z_0)$ for $j=0,1$. Then $\delta^j_t$ satisfies the ODE
\[
\frac{\dx \delta^j_t}{\dx t} = h(u^j_t(z), e^{-i \xi^j_{T-t}}) - h(u^0_t(z_0), e^{-i \xi^0_{T-t}}).
\]
We shall obtain the desired estimates by linearising this ODE and showing that, under assumptions \eqref{zassump} and \eqref{xiassump}, the higher order terms can be controlled.

Write
\[
\frac{\dx \delta^j_t}{\dx t} 
= \delta^j_t \frac{\partial h}{\partial u}(u^0_t(z_0), e^{-i \xi^0_{T-t}}) + (e^{-i \xi^j_{T-t}}-e^{-i \xi^0_{T-t}}) \frac{\partial h}{\partial v}(u^0_t(z_0), e^{-i \xi^0_{T-t}}) + H^j(t)
\]
where, by direct computation, 
\begin{align*}
H^j(t) = -\frac{2\left ( (\delta^j_t)^2 e^{-i \xi^j_{T-t}} + 2 \delta^j_t (e^{-i \xi^j_{T-t}}-e^{-i \xi^0_{T-t}}) u^0_t(z_0)  + (e^{-i \xi^j_{T-t}}-e^{-i \xi^0_{T-t}})^2(u^0_t(z_0))^2u^j_t(z) \right )}{(u_t^0(z_0)e^{-i \xi^0_{T-t}} -1)^2(u_t^j(z)e^{-i \xi^j_{T-t}} -1)} . 
\end{align*}

Taking $j=0$, we have
\[
\frac{\dx}{\dx t} \log \delta^0_t = \frac{\partial h}{\partial u}(u^0_t(z_0), e^{-i \xi^0_{T-t}}) + (\delta^0_t)^{-1}H^0(t)
\]
and hence, using \eqref{uderdef} and that $(u^0_0)'(z_0)=1$,
\begin{equation}
\label{delta0sol}
\log \frac{\delta^0_t}{(z-z_0)(u^0_t)'(z_0)} = \int_0^t (\delta^0_s)^{-1}H^0(s) \dx s.
\end{equation}
Taking $j=1$, we have
\begin{align*}
\frac{\dx}{\dx t} \left ( \frac{\delta^1_t}{(u^0_t)'(z_0)}\right ) 
&=  \frac{1}{(u^0_t)'(z_0)} \frac{\dx \delta^1_t}{\dx t}  - \frac{\delta^1_t}{(u^0_t)'(z_0)^2} \frac{\dx }{\dx t}\left ( (u^0_t)'(z_0) \right )  \\
&= \frac{1}{(u^0_t)'(z_0)} \left ( (e^{-i \xi^1_{T-t}}-e^{-i \xi^0_{T-t}}) \frac{\dx h}{\dx v} (u^0_t(z_0), e^{-i \xi^0_{T-t}})  +H^1(t) \right )
\end{align*}
and hence, using \eqref{hparder},
\begin{equation}
\label{deltasol}
\frac{\delta^1_t}{(u^0_t)'(z_0)} - (z-z_0) = - \int_0^t \frac{2 (e^{-i \xi^1_{T-s}}-e^{-i \xi^0_{T-s}}) u^0_s(z_0)^2}{(u^0_s)'(z_0)(u_s^0(z_0)e^{i \xi^0_{T-s}}-1)^2} \dx s+ \int_0^t \frac{H^1(s)}{(u^0_s)'(z_0)} \dx s.
\end{equation}
Since
\[
\left | \int_0^t \frac{2 (e^{-i \xi^1_{T-s}}-e^{-i \xi^0_{T-s}}) u^0_s(z_0)^2}{(u^0_s)'(z_0)(u_s^0(z_0)e^{i \xi^0_{T-s}}-1)^2} \dx s \right | \leq \| \xi^1-\xi^0\|_T \Lambda_t
\]
it follows immediately that
\[
|\delta^1_t - (z-z_0)(u^0_t)'(z_0)| \leq |(u^0_t)'(z_0)| \left ( \| \xi^1-\xi^0\|_T \Lambda_t + \int_0^t \frac{|H^1(s)|}{|(u^0_s)'(z_0)|} \dx s\right ).
\]

We next obtain bounds on $H^j(t)$, under the assumption that $t \leq T^j$, where
\begin{align*}
T^j= &\inf \left \{t>0: |\delta^j_t| > 2 |(u^0_t)'(z_0)| \left ( \| \xi^j-\xi^0\|_T \Lambda_t + |z-z_0| \right ) \right \} \wedge T.
\end{align*}
In what follows, we shall show that if we take $A=25$ in assumption \eqref{zassump} then $T^0=T$ and if we take it in \eqref{zassump} and \eqref{xiassump} then $T^1=T$. (Note that we have made no attempt to optimise the value of $A$.)

Using \eqref{xiassump} and \eqref{uderdef}, 
\[
\|\xi^1-\xi^0\|_T \leq \frac{|u_t^0(z_0)e^{-i \xi^0_{T-t}} -1|}{25|u_t^0(z_0)|}
\]
and
\[
|\delta^j_t| \leq \frac{4|u_t^0(z_0)e^{-i \xi^0_{T-t}} -1|}{25}
\]
for all $t \leq T^j$. Hence, 
\begin{align*}
\left | |u_t^j(z)e^{-i \xi^j_{T-t}} - 1| - |u_t^0(z_0)e^{-i \xi^0_{T-t}} -1| \right | 
&\leq |u_t^j(z)e^{-i \xi^j_{T-t}} - u_t^0(z_0)e^{-i \xi^0_{T-t}} | \\
&\leq |u_t^j(z) - u_t^0(z_0)| |e^{-i \xi^j_{T-t}}| + |e^{-i \xi^j_{T-t}} - e^{-i \xi^0_{T-t}}| |u_t^0(z_0)| \\
&\leq |\delta^j_t| + \|\xi^j-\xi^0\|_T |u_t^0(z_0)| \\
&\leq \frac{1}{5}|u_t^0(z_0)e^{-i \xi^0_{T-t}} -1| 
\end{align*}
and so
\[
|u_t^j(z)e^{-i \xi^j_{T-t}} - 1| \geq \frac{4}{5}|u_t^0(z_0)e^{-i \xi^0_{T-t}} -1|.
\]
Also
\[
|u_t^j(z)| \leq |u_t^0(z_0)| + |\delta^j_t| \leq \frac{33}{25} |u_t^0(z_0)|. 
\]
Hence, using the bounds above,
\[
\left |(\delta^0_t)^{-1} H^0(t) \right | \leq \frac{5 |\delta^0_t|}{2|u_t^0(z_0)e^{-i \xi^0_{T-t}} -1|^3} \leq 5 \frac{|z-z_0||(u_t^0)'(z_0)|}{|u_t^0(z_0)e^{-i \xi^0_{T-t}} -1|^3} 
\]
and so, by \eqref{delta0sol}, $T^0=T$ and the first statement in the lemma follows. 
Similarly
\begin{align*}
|H^1(t)| &\leq \frac{5}{2|u_t^0(z_0)e^{-i \xi^0_{T-t}} -1|^3} \left ( (\delta^1_t)^2 + 2 |u_t^0(z_0)| |\delta^1_t| \|\xi^1-\xi^0\|_T + \frac{33}{25} |u_t^0(z_0)|^3 \|\xi^1-\xi^0\|_T^2\right ) \\
&\leq \frac{20 \left ( \| \xi^1-\xi^0\|_T^2 \Lambda_t^2 + |z-z_0|^2 \right )|(u_t^0)'(z_0)|^2}{|u_t^0(z_0)e^{-i \xi^0_{T-t}} -1|^3} + \frac{233|u_t^0(z_0)|^2 \|\xi^1-\xi^0\|_T}{250|u_t^0(z_0)e^{-i \xi^0_{T-t}} -1|^2}.
\end{align*}
By \eqref{xiassump}, we have
\[
\int_0^t \frac{|H^1(s)|}{|(u_s^0)'(z_0)|} \dx s \leq \| \xi^1-\xi^0\|_T \Lambda_t + 20|z-z_0|^2 \int_0^t \frac{|(u_s^0)'(z_0)| }{ |u_s^0(z_0)e^{-i \xi^0_{T-s}} -1|^3} \dx s.
\]
It follows that $T^1=T$ and hence
\[
|\delta^1_t - (z-z_0)(u^0_t)'(z_0)| \leq |(u^0_t)'(z_0)| \left ( 2\| \xi^1-\xi^0\|_T \Lambda_t + 20|z-z_0|^2 \int_0^t \frac{|(u_s^0)'(z_0)| }{ |u_s^0(z_0)e^{-i \xi^0_{T-s}} -1|^3} \dx s \right ).
\]

To obtain estimates on the derivative, we use that
\begin{align*}
\log (u_t^j)'(z) &= \int_0^t \frac{\partial h}{\partial u}(u_s^j(z), e^{-i\xi^j_{T-s}}) \dx s \\
&= \int_0^t \left ( \frac{\partial h}{\partial u}(u_s^0(z_0), e^{-i\xi^0_{T-s}}) + H^j_1(s) \right )\dx s \\
&= \log (u_t^0)'(z_0) + \int_0^t H^j_1(s) \dx s
\end{align*}
where
\begin{align*}
H^j_1(t) = \frac{-2 \left (\delta^j_t e^{-i \xi^j_{T-t}} + (e^{-i \xi^j_{T-t}}-e^{-i \xi^0_{T-t}})u_t^0(z_0) \right )}{(u_t^0(z_0)e^{-i \xi^0_{T-t}} -1)(u_t^j(z)e^{-i \xi^j_{T-t}} -1)} \left ( \frac{1}{u_t^0(z_0)e^{-i \xi^0_{T-t}} -1} + \frac{1}{u_t^j(z)e^{-i \xi^j_{T-t}} -1} \right ). 
\end{align*}
As above,
\begin{align*}
|H^j_1(t)| \leq \frac{25}{2} \left ( \| \xi^j-\xi^0\|_T \frac{|(u^0_t)'(z_0)| \Lambda_t + |u^0_t(z_0)|}{|u_t^0(z_0)e^{-i \xi^0_{T-t}} -1|^3} +|z-z_0| \frac{|(u^0_t)'(z_0)|}{|u_t^0(z_0)e^{-i \xi^0_{T-t}} -1|^3}\right )
\end{align*}
and hence
\[
\left | \log \frac{(u_t^j)'(z)}{(u_t^0)'(z_0)} \right | \leq \frac{25}{2} \left ( \| \xi^j-\xi^0\|_T \int_0^t \frac{|(u^0_s)'(z_0)| \Lambda_s + |u^0_s(z_0)|}{|u_s^0(z_0)e^{-i \xi^0_{T-s}} -1|^3} \dx s +|z-z_0| \int_0^t \frac{|(u^0_s)'(z_0)|}{|u_s^0(z_0)e^{-i \xi^0_{T-s}} -1|^3} \dx s \right ). 
\]
Finally, we observe that, by the same arguments as above, under assumption \eqref{zassump} with $A=25$,
$|u^0_t(z)|/|u^0_t(z_0)|$, $|u^0_t (z) e^{-i \xi^0_{T-t}}-1|/|u^0_t (z_0) e^{-i \xi^0_{T-t}}-1|$ and $|(u^0_t)'(z)|/|(u_t^0)'(z_0)|$ can be bounded above and below by strictly positive absolute constants and hence there exists some absolute constant $A_1 \geq 1$ such that 
\[
|(u_t^0)'(z)| \tilde{\Lambda}_t+|u^0_t(z)|\leq A_1 \left (|(u_t^0)'(z_0)|\Lambda_t+|u^0_t(z_0)|\right)
\]
for all $0\leq t \leq T$, where
\[
\tilde{\Lambda}_t = \int_0^t \frac{2 |u_s^0(z)|^2 \dx s}{|(u_s^0)'(z)||u_s^0(z)e^{i \xi^0_{T-s}}-1|^2}.
\]
Hence, if assumption \eqref{xiassump} holds with $A=25(5/4)^3A_1$, then
\[
\|\xi^1-\xi^0\|_T \leq 25^{-1} \inf_{0 \leq t \leq T} \left ( \frac{|u_t^0(z)e^{-i \xi^0_{T-t}} -1|}{|(u_t^0)'(z)|\tilde{\Lambda}_t+|u^0_t(z)|} \wedge \left ( \int_0^t \frac{\tilde{\Lambda}_s|(u_s^0)'(z)| + |u^0_s(z)|}{ |u_s^0(z)e^{-i \xi^0_{T-s}} -1|^3} \dx s \right )^{-1} \right ),
\]
and so we may set $z=z_0$ in the computation above to get that
\[
|u^1_t(z) - u^0_t(z)| \leq 2|(u^0_t)'(z)| \| \xi^1-\xi^0\|_T \tilde{\Lambda}_T \leq A |(u^0_t)'(z_0)| \| \xi^1-\xi^0\|_T \Lambda_T
\]
and
\[
\left | \log \frac{(u^1_t)'(z)}{(u^0_t)'(z)} \right | \leq A \| \xi^1-\xi^0\|_T \int_0^t \frac{\Lambda_s|(u_s^0)'(z_0)| + |u^0_s(z_0)|}{ |u_s^0(z_0)e^{-i \xi^0_{T-s}} -1|^3} \dx s, 
\]
as required.
\end{proof}

\subsection{Small driving functions}
\label{slitcomp}

In this section, we explicitly evaluate $u^0_t(z_0)$ and $(u^0_t)'(z_0)$ when $\xi^0 \equiv 0$ and either $\arg z_0 = 0$ or $|z_0|=1$. This enables us to compare $\Psi_T(z)$ to the slit map $f_T(z)$ when $\xi$, the driving function of $\Psi$, is close to zero. Since $\xi^0_{T-t}$ does not depend on $T$, $u^0_t(z_0)=f_t(z_0)$ and $(u^0_t)'(z_0)=f'_t(z_0)$ for all $t \geq 0$. We could therefore, in principle, just substitute the estimates from Section \ref{slitder} into Lemma \ref{ucomp}. However, instead we observe that in these two cases solving the pair of differential equations \eqref{radiusreverse} and \eqref{anglereverse} reduces to solving a single ordinary differential equation, and we are able to obtain explicit solutions directly. 

First suppose that $z_0=r > 1$. Set $u^0_t(z_0)=r^0_te^{i \vartheta^0_t}$. From \eqref{anglereverse} it is immediate that $\vartheta^0_t=0$ for all $t > 0$. Substituting this into
\eqref{radiusreverse} we get
\begin{equation}
\label{drdef}
\partial_tr^{0}_t=r^{0}_t\frac{r^{0}_t+1}{r^{0}_t-1}.
\end{equation}
Solving this gives
\[
\log \left ( \frac{(r_t^0+1)^2 r}{r^0_t (r+1)^2} \right ) = t
\]
or
\begin{equation}
\label{rdef}
u^0_t(z_0) = r^0_t = \frac{(r+1)^2 e^t}{2r} \left ( 1 + \sqrt{1 - \frac{4re^{-t}}{(r+1)^2}} \right ) - 1.
\end{equation}
Observe that if $r=1$, then $r^0_t = d(t) + 1$.

Now suppose $z_0=e^{i \theta}$ where $|\theta| \in (0, \pi)$. Although $u^0_t(z_0)$ is not explicitly defined when $|z_0|=1$, $u^0_t(z)$ for $|z|>1$ can be continuously extended to the boundary of the unit disk in a well-defined way, so this is the interpretation we put on $u^0_t(e^{i \theta})$. 

From \eqref{radiusreverse} it is immediate that $r^0_t=1$ for all $t \leq \inf\{ t > 0: u^0_t(e^{i \theta})=1\}$. Substituting this into
\eqref{anglereverse} we get
\[
\partial_t\vartheta^0_t=-\frac{\sin \vartheta^0_t}{1-\cos \vartheta^0_t}=-\cot \frac{\vartheta^0_t}{2}.
\]
Solving this gives
\begin{equation}
\label{varthetadef}
\vartheta^0_t = \vartheta^0_t(e^{i \theta}) = \cos^{-1}\left( (1+\cos \theta)e^t -1 \right)
\end{equation}
and hence 
\[
\inf\{ t > 0: u^0_t(e^{i \theta})=1\} = \log \frac{2}{1 + \cos \theta}.
\]

\begin{corollary}
\label{dertipetc}
Suppose $\Psi_t(z)$ is the solution to the Loewner equation \eqref{loewnerPDEdrivingfunct}.
\begin{itemize}
\item[(i)] (Near the tip). There exists some absolute constant $A$ such that, for all $|z|>1$ and $T>0$ satisfying $\|\xi\|_T + |\arg z| \leq A^{-1} (|z|-1) / |z|$,
we have 
\[
\left | \log \frac{|\Psi'_T(z)|}{|f_T'(z)|} \right | \leq \left (  \frac{ A|z|(\|\xi\|_T + |\arg z|)}{|z|-1} \right )^2.
\]
\item[(ii)] (Away from the tip).
There exists some absolute constant $A$ such that, for all $|z|>1$ and $T>0$ satisfying
\[
T \leq \log \frac{2}{1+\cos(\arg z)}
\]
and
\[
\|\xi\|_T + |z|-1 \leq A^{-1} e^{-T/2} \cot \frac{\arg z }{2} \tan \frac{\vartheta^0_T}{2}\sqrt{1-\cos \vartheta^0_T},
\]
where $\vartheta^0_t$ is defined as in \eqref{varthetadef} with $\theta=\arg z$, we have 
\[
\left | \log \frac{|\Psi'_T(z)|}{\tan \frac{\arg z}{2} \cot \frac{\vartheta^0_T}{2}} \right | \leq \frac{A\sqrt{e^T(e^T-1)}(\|\xi\|_T+|z|-1)}{1-\cos \vartheta^0_T} \leq 1,
\]
\[
1 - \cos (\arg \Psi_T(z)) \leq A (1-\cos \vartheta^0_T),
\]
and
\[
|\Psi_T(z)| - 1 \geq A^{-1}(|z|-1)\tan \frac{\arg z}{2} \cot \frac{\vartheta^0_T}{2}.
\]
\end{itemize}
\end{corollary}
\begin{proof}
\begin{itemize}
\item[(i)] 
Set $z_0=|z|$ and define $r^0_t$ as in \eqref{rdef}, with $r=|z|$. Using \eqref{drdef}, we compute $|(u^0_t)'(z_0)|$ and $\Lambda_t$ from Lemma \ref{ucomp}. By \eqref{uderdef},
\begin{align*}
|(u^0_t)'(z_0)| &= e^t \exp \left ( - \int_0^t \frac{2 \dx s}{(r^0_s - 1)^2}\right ) \\
&= e^t \exp \left ( - \int_0^t \frac{2 \partial_sr^{0}_s }{r^0_s((r^0_s)^2 - 1)} \dx s\right ) \\
&= e^t \frac{(r^2-1)(r^0_t)^2}{((r^0_t)^2 - 1)r^2} \\
&= \frac{(r-1)r^0_t(r^0_t+1)}{(r^0_t - 1)r(r+1)} \\
&\leq e^t.
\end{align*}
Therefore, again using \eqref{drdef},
\[
\Lambda_t = \int_0^t \frac{(r^0_s - 1)r(r+1)}{(r-1)r^0_s(r^0_s+1)} \frac{2 (r^0_s)^2}{(r^0_s-1)^2} \dx s = \frac{2r(r_t^0-r)}{(r-1)(r_t^0+1)}
\]
and so
\[
|(u^0_t)'(z_0)| \Lambda_t = \frac{2r^0_t (r_t^0-r)}{(r^0_t - 1)(r+1)} \leq r^0_t.
\]
Hence
\[
\frac{|(u^0_t)'(z_0)| \Lambda_t + |u^0_t(z_0)|}{|u_t^0(z_0) -1|} \leq \frac{2r^0_t}{r^0_t-1} \leq \frac{2|z|}{|z|-1}
\]
and
\[
\int_0^t\frac{|(u^0_s)'(z_0)| \Lambda_s + |u^0_s(z_0)|}{|u_s^0(z_0) -1|^3} \dx s
\leq \int_0^t \frac{2r^0_s}{(r^0_s-1)^3} \dx s \leq \frac{1}{|z|-1}.
\]
Here we have used that $r_t^0 \geq |z|$ for all $0 \leq t \leq T$ in each of the final inequalities in the preceding two displays.
Similarly
\[
\frac{|(u^0_t)'(z_0)|}{|u_t^0(z_0) -1|} \leq \frac{1}{|z|-1}
\]
and
\[
\int_0^t \frac{|(u^0_s)'(z_0)|}{|u_s^0(z_0) -1|^3} \dx s \leq \frac{1}{2|z|(|z|^2-1)}.
\]
By Lemma \ref{ucomp}, using that $r^0_t \leq 4|z|e^t$, we get
\[
\left | \log \frac{\Psi'_T(z)}{f_T'(z)} \right | \leq \frac{A \|\xi\|_T}{|z|-1}.
\]
By using that $u^0_t(z_0)$ and hence $(u^0_t)'(z_0)$ are purely real, that
\[
\left | \RRe (e^{i \xi_t}) - 1 \right | \leq \| \xi \|_T^2
\]
and that
\[
\left | \RRe z - |z| \right | \leq |z| (\arg z)^2,
\]
it is possible to repeat the computations in the proof of Lemma \ref{ucomp} for the real parts of $u^1_t(z)$ and $\log (u^1_t)'(z)$ to obtain the stronger bound
\[
\left | \log \frac{|\Psi'_T(z)|}{|f_T'(z)|} \right | = \left | \RRe \log \frac{\Psi'_T(z)}{f_T'(z)} \right | \leq \left (  \frac{ A|z|(\|\xi\|_T + |\arg z|)}{|z|-1} \right )^2.
\]
We omit the details as the argument is almost identical to that used in the proof of Lemma \ref{ucomp}.
\item[(ii)] 
Set $z_0 = e^{i \arg z}$. If $0 \leq t \leq T < \log \frac{2}{1 + \cos (\arg z)}$, then defining $\vartheta^0_t$ as in \eqref{varthetadef}, with $\theta=\arg z$,
\[
|u_t^0(z_0) - 1|^2 = 2(1-\cos \vartheta^0_t),
\]
\begin{align*}
|(u^0_t)'(z_0)| &= \exp \left ( \int_0^t \frac{\dx s}{1-\cos \vartheta^0_s}\right ) \\
&= \exp \left ( -\int_0^t \frac{ \partial_s\vartheta^0_s }{\sin \vartheta^0_s} \dx s \right ) \\
&= \tan \frac{\theta}{2} \cot \frac{\vartheta^0_t}{2},
\end{align*}
and
\[
\Lambda_t = \cot \frac{\theta}{2} \int_0^t \frac{\tan \frac{\vartheta^0_t}{2}}{1-\cos \vartheta^0_s} \dx s = 1-\cot \frac{\theta}{2} \tan \frac{\vartheta^0_t}{2}.
\]
By standard trigonometric identities, and using the explicit value of $\vartheta^0_t$ from \eqref{varthetadef},
\begin{equation}
\label{tancotid}
\tan \frac{\theta}{2} \cot \frac{\vartheta^0_t}{2} = \sqrt{\frac{(1-\cos \theta)(1+\cos \vartheta^0_t)}{(1+\cos \theta)(1-\cos \vartheta^0_t)}} = \sqrt{\frac{(1-\cos \theta)e^t}{1-\cos \vartheta^0_t}} = \sqrt{1+\frac{2 (e^t-1)}{1-\cos \vartheta^0_t}}.
\end{equation}
Hence
\[
|(u^0_t)'(z_0)| \Lambda_t = \tan \frac{\theta}{2} \cot \frac{\vartheta^0_t}{2} - 1,
\]
\[
\frac{|(u^0_t)'(z_0)| \Lambda_t + |u^0_t(z_0)|}{|u_t^0(z_0) -1|} = \frac{|(u^0_t)'(z_0)|}{|u_t^0(z_0) -1|} = \frac{\tan \frac{\theta}{2} \cot \frac{\vartheta^0_t}{2}}{\sqrt{2(1-\cos \vartheta^0_t)}} 
\]
and
\begin{align*}
\int_0^t\frac{|(u^0_s)'(z_0)|}{|u_s^0(z_0) -1|^3} \dx s
&= 2^{-3/2}\tan \frac{\theta}{2} \int_0^t \frac{\cot \frac{\vartheta^0_s}{2}}{(1-\cos \vartheta^0_s)^{3/2}} \dx s \\
&\leq \frac{\tan \frac{\theta}{2}}{2^{3/2}\sqrt{1 + \cos \theta}} \int_0^t \frac{\cot \frac{\vartheta^0_s}{2} \sin \vartheta^0_s }{(1-\cos \vartheta^0_s)^{2}} \dx s \\
&=\frac{\tan \frac{\theta}{2}}{2^{3/2}\sqrt{1 + \cos \theta}} \frac{\cos \vartheta^0_t - \cos \theta}{(1-\cos \vartheta^0_t)(1-\cos \theta)} \\
&= \frac{e^{t/2}(1-e^{-t})\tan \frac{\theta}{2} \cot \frac{\vartheta^0_t}{2}}{2^{3/2}(1 - \cos \theta)\sqrt{1-\cos \vartheta^0_t}} \\
&\leq \frac{e^{t/2}\tan \frac{\theta}{2} \cot \frac{\vartheta^0_t}{2}}{4\sqrt{1-\cos \vartheta^0_t}},
\end{align*}
where we used the upper bound on $T$ in the final line.
The first result follows directly from Lemma \ref{ucomp}. For the second, as in the proof of Lemma \ref{ucomp},
\[
2 \left ( 1-\cos (\arg |\Psi_T(z)| - \xi_0) \right ) \leq |u^1_T(z) e^{-i \xi_0} - 1|^2 \leq 4|u^0_T(z_0) -1|^2 = 8 (1-\cos \vartheta^0_T), 
\]
and the result follows by using the assumption on $\|\xi\|_T$.
For the final result, observe that, by \eqref{radiusreverse} and Lemma \ref{ucomp} there exist absolute constants $A_i$ such that
\begin{align*}
& \ \log \frac{|\Psi_T(z)|-1}{|z|-1} \\ = & \ \int_0^T \frac{|u^1_t(z)|(|u^1_t(z)| + 1)}{|u^1_t(z)e^{-i \xi_{T-t}}-1|^2} \dx t \\
\geq & \ \int_0^T \frac{2}{|u^0_t(z_0)-1|^2} \dx t - \int_0^T \left | \frac{2}{||u^1_t(z)e^{-i \xi_{T-t}}-1|^2} - \frac{2}{|u^0_t(z_0)-1|^2}\right | \dx t\\
\geq & \ \int_0^T \frac{\dx t}{1-\cos \vartheta^0_t} - A_1 \int_0^T \frac{|u^1_t(z) - u^0_t(z_0)| + \| \xi \|_T}{|u^0_t(z_0)-1|^3}\dx t \\
\geq & \ \log \tan \frac{\theta}{2} \cot \frac{\vartheta^0_T}{2} \\
& \ - A_2 \left ( \| \xi\|_T \int_0^T \frac{\Lambda_t|(u_t^0)'(z_0)| + |u_t^0(z_0)|}{ |u_t^0(z_0)e^{-i \xi^0_{T-t}} -1|^3}\dx t + (|z|-1) \int_0^T \frac{|(u_t^0)'(z_0)| \dx t}{ |u_t^0(z_0)e^{-i \xi^0_{T-t}} -1|^3} \right ) \\
\geq & \ \log \tan \frac{\theta}{2} \cot \frac{\vartheta^0_T}{2} - A_3.
\end{align*}
Taking $A>e^{A_3}$ gives the required result.
\end{itemize}
\end{proof}

Next, we extend Corollary \ref{dertipetc} (ii) to give a lower bound on the derivative that holds for all values of $T$. 

\begin{lemma}
\label{phiprimebounds}
Suppose $\Psi_t(z)$ is the solution to the Loewner equation \eqref{loewnerPDEdrivingfunct}.
There exists some absolute constant $B$ such that, for all $T>0$ and $|z|>1$ satisfying
\[
\| \xi \|_T + |z|-1 \leq A^{-1}\sqrt{1-\cos (\arg z)},
\]
where $A$ is the absolute constant from Corollary \ref{dertipetc} (ii), we have
\[
|\Psi'_T(z)| \geq \frac{(|z|-1)\sqrt{1-\cos (\arg z)}}{B e^{T}(\|\xi\|_T+|z|-1)}. 
\]
\end{lemma}
\begin{proof}
We first obtain a generic lower bound on $|\Psi'_T(z)|$, without making any assumptions on the driving function $\xi$ or initial value $z$.
By \eqref{uderdef}
\[
\log |\Psi'_T(z)| \geq T-\int_0^T \frac{2}{|u^1_t(z)e^{-i \xi_{T-t}}-1|^2} \dx s = T-\int_0^T \frac{2 \partial_t r_t^1}{r_t^1\left ((r_t^1)^2-1\right )} \dx t = \log \frac{e^T |\Psi_T(z)|^2 (|z|^2-1)}{|z|^2(|\Psi_T(z)|^2-1)}.
\]
Therefore, using the fact that $|\Psi_T(z)|\geq |z|$,
\[
|\Psi'_T(z)| \geq \frac{e^T (|z|-1)}{|\Psi_T(z)|-1}.
\]
Now suppose $T$ satisfies the conditions of Corollary \ref{dertipetc} (ii). 
Then 
\[
|\Psi'_T(z)| \geq e^{-1} \tan(\arg(z)/2) \cot (\vartheta^0_T/2) \geq \frac{1}{3}
\]
and hence the required result holds provided $B \geq 3\sqrt{2}$.

If $T$ does not satisfy the conditions from Corollary \ref{dertipetc} (ii), then there exists some $0<S_1 < T$ such that 
\[
\| \xi \|_T + |z|-1 = A^{-1} e^{-S_1/2} \cot \frac{\arg z }{2} \tan \frac{\vartheta^0_{S_1}}{2}\sqrt{1-\cos \vartheta^0_{S_1}}.
\]
By \eqref{tancotid}, this is equivalent to 
\[
1-\cos \vartheta^0_{S_1} = A e^{S_1} (\| \xi \|_T + |z|-1)\sqrt{1-\cos (\arg z)}.
\]
We can write $\Psi_T(z) = \Psi_{T-S_1}(\psi_{S_1}(z))$ where $\psi_{S_1}$ is the solution to the Loewner equation for some driving function which is bounded by $\| \xi \|_T$. 
Using the generic estimate above, the results of Corollary \ref{dertipetc} (ii) applied to $\psi_{S_1}(z)$, the identity in \eqref{tancotid}, and that $| \Psi_T(z)| - 1 \leq 4|z| e^T$,
\begin{align*}
|\Psi'_T(z)| &\geq e^{T-S_1} \frac{|\psi_{S_1}(z)|-1}{|\Psi_T(z)|-1}|\psi'_{S_1}(z)| \\
&\geq \frac{e^{T-S_1} (|z|-1) \tan^2(\arg(z)/2) \cot^2(\vartheta^0_{S_1}/2)}{12 A |z| e^T} \\
&= \frac{(|z|-1) (1-\cos (\arg z))}{12A (1-\cos \vartheta^0_{S_1})} \\
&\geq \frac{(|z|-1) \sqrt{1-\cos (\arg z)}}{12 A^2 e^{T} (\| \xi \|_T + |z|-1)}.  
\end{align*}
Taking the absolute constant $B= 12 A^2$, gives the required result.
\end{proof}

Finally, we describe the radial and angular effect of the slit map $f_t(z)$ near the tip for
small values of $t$.

\begin{lemma}
\label{slitmapmove}
There exists some absolute constant $B$ such that, for all $0<t<1$ and $|z|>1$ with $|\arg z| \leq t^{1/2}$, we have 
\[
|f_t(z)| -1 \geq B^{-1} t^{1/2} \quad \mbox{and} \quad |\arg f_t(z)| \leq B(|z|-1).
\]
\end{lemma}
\begin{proof}
By \eqref{radiusreverse} and \eqref{anglereverse}, $|f_t(z)|$ is increasing in $t$ and $|\arg f_t(z)|$ is decreasing in $t$. Therefore, without loss, we may assume that $|z|-1 \leq A^{-1}t^{1/2}$ and $(1-\cos (\arg z))^{1/2} \geq A (|z|-1)$ where $A$ is the absolute constant from Corollary \ref{dertipetc} (ii). (Here we have used that $|\arg z| \asymp (1-\cos (\arg z))^{1/2}$). It follows that $|z|-1 \leq A^{-1}(1-\cos (\arg z))^{1/2}$ and so there exists some $s \leq \log (2/(1+\cos (\arg z)))$ such that 
\[
|z|-1 = A^{-1} e^{-s/2} \cot \frac{\arg z }{2} \tan \frac{\vartheta^0_s}{2}\sqrt{1-\cos \vartheta^0_s},
\]
where $\vartheta^0_s$ is defined as in \eqref{varthetadef} with $\theta=\arg z$. Observe that, by Corollary \ref{dertipetc} (ii),
\[
1 - \cos (\arg f_s(z)) \leq A (1-\cos \vartheta^0_s) = A^3 e^{s} \left ( (|z|-1) \tan \frac{\arg z }{2} \cot \frac{\vartheta^0_s}{2} \right )^2 \leq 3A^5 (|f_s(z)|-1)^2.
\]
Hence, again using \eqref{radiusreverse} and that $|\arg f_r(z)|$ is decreasing and $|f_r(z)|$ increasing in $r$, we have for all $s \leq r \leq t$ that
\begin{align*}
\partial_r|f_r(z)| &\geq |f_r(z)|\frac{|f_r(z)|^2-1}{(|f_r(z)|-1)^2+2|f_r(z)|(1-\cos (\arg f_s(z)))} \\
&\geq A_1^{-1} |f_r(z)|\frac{|f_r(z)|^2-1}{(|f_r(z)|-1)^2} 
\end{align*}
for some absolute constant $A_1$. It follows that 
\[
\log \left ( \frac{(|f_t(z)|+1)^2 }{4|f_t(z)|} \right ) \geq \log \left ( \frac{(|f_t(z)|+1)^2 |f_s(z)|}{|f_t(z)| (|f_s(z)|+1)^2} \right ) \geq \frac{t-s}{A_1} = \log \left ( \frac{(d((t-s)/A_1)+2)^2}{4 (d((t-s)/A_1)+1)} \right ) 
\]
and hence $|f_t(z)| \geq 1+ d((t-s)/A_1)$.
Since $0<t<1$, it is straightforward to verify that
\[
\log \frac{2}{1+\cos (t^{1/2})} \leq \frac{t}{2}
\]
and so $s \leq t/2$, Therefore $|f_t(z)|-1 \geq d(t/(2A_1)) \geq B_1^{-1}t^{1/2}$ for some absolute constant $B_1$.

By \eqref{radiusreverse} and \eqref{anglereverse},
\[
\frac{\partial_r (\arg f_r(z))}{\sin (\arg f_r(z))} = \frac{-2 \partial_r (|f_r(z)|)}{|f_r(z)|^2-1}
\]
and hence, integrating both sides,
\[
\tan \left (\frac{\arg f_t(z)}{2} \right ) = \tan \left ( \frac{\arg z}{2} \right ) \frac{(|f_t(z)|+1)(|z|-1)}{(|f_t(z)|-1)(|z|+1)} \leq B_1 \tan \left (\frac{t^{1/2}}{2} \right ) \frac{(2+d(t))(|z|-1)}{2 t^{1/2}}. 
\]
It follows that there exists some absolute constant $B \geq B_1$ such that $|\arg f_t(z)| \leq B(|z|-1)$.
\end{proof}

We are now in a position to return to the $\mathrm{ALE}(0,\eta)$ model and apply our results to prove Lemma \ref{phinbounds}, which we restate for convenience.

\begin{customlemma}{\ref{phinbounds}}
Fix $T>0$, let $n \leq \lfloor T/\capc \rfloor$ and set $\epsilon_n=(e^\parsig - 1) \vee \sup_{k\leq n} |\theta_k|$. 
\begin{itemize}
\item[(i)] 
There exists some absolute constant $A>1$, such that if $|\theta-\theta_n|<\capc^{1/2}$ and $\epsilon_n  < A^{-1} \capc^{1/2}$, then
\[
 \left| \left| \frac{\Phi'_n(e^{\parsig + i \theta})}{(f^{\theta_{n}}_{n \capc})'(e^{\parsig + i \theta})} \right | - 1 \right | < A \epsilon_n^2 \capc^{-1}.
\]
\item[(ii)] There exist absolute constants $A$ and $B$ only dependent on $T$, such that  if $\epsilon_n \leq A^{-1} \capc^{1/2}$, then 
\[
\left | \Phi'_n(e^{\parsig + i \theta}) \right | \geq B^{-1} \epsilon_n^{-1} \parsig (1 - \cos(\theta-\theta_n))^{1/2}.
\]
\end{itemize}
\end{customlemma}
\begin{proof}
\begin{itemize}
\item[(i)]
By the chain rule,
\[
\frac{\Phi'_n(e^{\parsig + i \theta})}{(f^{\theta_{n}}_{n \capc})'(e^{\parsig + i \theta})} = \frac{\Phi_{n-1}'(f^{\theta_n}_{\capc}(e^{\parsig + i \theta}))(f^{\theta_n}_{\capc})'(e^{\parsig + i \theta})}{(f^{\theta_n}_{(n-1)\capc})'(f^{\theta_n}_{\capc}(e^{\parsig + i \theta})) (f^{\theta_n}_{\capc})'(e^{\parsig + i \theta})} = \frac{\Phi_{n-1}'(f^{\theta_n}_{\capc}(e^{\parsig + i \theta}))}{(f^{\theta_n}_{(n-1)\capc})'(f^{\theta_n}_{\capc}(e^{\parsig + i \theta}))}.
\]
Set \[w=f^{\theta_n}_{\capc}(e^{\parsig + i\theta}) = e^{i \theta_n} f_\capc(e^{\parsig + i (\theta-\theta_n)}).\] Then if $|\theta-\theta_n| \leq  \capc^{1/2}$, by Lemma \ref{slitmapmove}, we have $|w|-1>B^{-1}\capc^{1/2}$ and $|\arg w - \theta_n| < B(e^{\parsig}-1)$ for some absolute constant $B$, and so 
\[
2 \epsilon_n + |\arg w - \theta_n| \leq (2B+B^2) \epsilon_n \capc^{-1/2} (|w|-1).
\] 
Since the conformal map $e^{i \theta_n}\Phi_{n-1}(z e^{-i \theta_n})$ has driving function bounded by $\sup_{k \leq n} |\theta_k - \theta_n| \leq 2 \epsilon_n$, by Corollary \ref{dertipetc} (i), there exists some constant $A$ (different to that in the corollary), such that if $\epsilon_n  < A^{-1} \capc^{1/2}$, then
\[
\left | \left | \frac{\Phi'_{n-1}(w)}{(f^{\theta_n}_{\capc(n-1)})'(w)} \right | -1 \right | \le A \epsilon_n^2 \capc^{-1}.
\]
Observe that it is not possible to apply Corollary \ref{dertipetc} (i) directly to $\Phi_n$ in the argument above, as this result requires $(|z|-1)/|\arg z - \theta_n|$ to be bounded away from zero which is not the case here. This is where we use that $\Phi_n$ evolves in discrete steps. Specifically, we invoke Lemma \ref{slitmapmove} to show that the single slit map $f_{\capc}^{\theta_n}$ maps $z$ into a region in which the condition needed for Corollary \ref{dertipetc} (i) holds. 

\item[(ii)] The result follows directly from Lemma \ref{phiprimebounds}.
\end{itemize}
\end{proof}

\section*{Acknowledgements}
We would like to thank the Isaac Newton Institute for Mathematical Sciences (Cambridge, UK) for support and hospitality during the programme ``Random Geometry'' where work on this paper was initiated. This work was supported by EPSRC grant no EP/K032208/1.

AS thanks the members of the Statistical Laboratory, University of Cambridge, for their hospitality during visits in the summers of 2016 and 2017. 

FV acknowledges generous support from the Knut and Alice Wallenberg Foundation, the Swedish Research Council, and the Gustafsson Foundation.

We are grateful to James Norris for numerous discussions and useful remarks, and for pointing out an error in an earlier version of our manuscript.  
Thanks go to Vittoria Silvestri for extensive comments on a draft of this paper. Finally, we thank two anonymous referees for a careful reading of our paper, and for incisive and detailed suggestions on how to improve the presentation.

\end{document}